\numberwithin{equation}{section}
\newtheoremstyle{thmlemcorr}{10pt}{10pt}{\itshape}{}{\bfseries}{.}{10pt}{{\thmname{#1}\thmnumber{ #2}\thmnote{ (#3)}}}
\newtheoremstyle{thmlemcorr*}{10pt}{10pt}{\itshape}{}{\bfseries}{.}\newline{{\thmname{#1}\thmnumber{ #2}\thmnote{ (#3)}}}
\newtheoremstyle{remexample}{10pt}{10pt}{}{}{\bfseries}{.}{10pt}{{\thmname{#1}\thmnumber{ #2}\thmnote{ (#3)}}}
\theoremstyle{thmlemcorr}
\newtheorem{theorem}{Theorem}
\numberwithin{theorem}{section}
\newtheorem{lemma}[theorem]{Lemma}
\newtheorem{corollary}[theorem]{Corollary}
\newtheorem{proposition}[theorem]{Proposition}
\newtheorem{definition}[theorem]{Definition}
\theoremstyle{thmlemcorr*}
\newtheorem{theorem*}{Theorem}
\newtheorem{lemma*}[theorem]{Lemma}
\newtheorem{corollary*}[theorem]{Corollary}
\newtheorem{proposition*}[theorem]{Proposition}
\newtheorem{problem*}[theorem]{Problem}
\newtheorem{conjecture*}[theorem]{Conjecture}
\newtheorem{definition*}[theorem]{Definition}
\theoremstyle{remexample}
\newtheorem{remark}[theorem]{Remark}
\DeclareMathOperator{\id}{Id}
\newcommand{\dif}{\mathrm{d}}
\let\div\relax
\DeclareMathOperator{\div}{div}
\DeclareMathOperator{\dist}{dist}
\DeclareMathOperator{\tr}{tr}
\DeclareMathOperator{\supp}{supp}
\DeclareMathOperator{\dev}{dev}
\DeclareMathOperator{\LL}{\mathrm{L}}
\DeclareMathOperator{\WW}{\mathrm{W}}
\DeclareMathOperator{\CC}{\mathrm{C}}
\DeclareMathOperator{\BV}{\mathrm{BV}}
\DeclareMathOperator{\BD}{\mathrm{BD}}
\DeclareMathOperator{\LD}{\mathrm{LD}}
\DeclareMathOperator{\UU}{\mathrm{U}}
\DeclareMathOperator{\LU}{\mathrm{LU}}
\DeclareMathOperator{\M}{\mathrm{M}}
\DeclareMathOperator{\Mpos}{\mathrm{M}^{+}}
\DeclareMathOperator{\Mloc}{\mathrm{M}_{\mathrm{loc}}}
\DeclareMathOperator{\MlocProb}{\mathrm{M}^1_{\mathrm{loc}}}
\DeclareMathOperator{\MlocPos}{\mathrm{M}^{+}_{\mathrm{loc}}}
\DeclareMathOperator{\SD}{SD({\it d})}
\newcommand{\starconv}{\stackrel{_\ast}{\rightharpoondown}}
\DeclarePairedDelimiter\norm{\lVert}{\rVert}
\newcommand{\mres}{\mathbin{\vrule height 1.6ex depth 0pt width
		0.13ex\vrule height 0.13ex depth 0pt width 1.3ex}}
\renewcommand*{\eqref}[1]{%
	\hyperref[{#1}]{\textup{\tagform@{\ref*{#1}}}}%
}
\newcommand*{\mint}[1]{%
	\mint@l{#1}{}%
}
\newcommand*{\mint@l}[2]{%
	\@ifnextchar\limits{%
		\mint@l{#1}%
	}{%
		\@ifnextchar\nolimits{%
			\mint@l{#1}%
		}{%
			\@ifnextchar\displaylimits{%
				\mint@l{#1}%
			}{%
				\mint@s{#2}{#1}%
			}%
		}%
	}%
}
\newcommand*{\mint@s}[2]{%
	\@ifnextchar_{%
		\mint@sub{#1}{#2}%
	}{%
		\@ifnextchar^{%
			\mint@sup{#1}{#2}%
		}{%
			\mint@{#1}{#2}{}{}%
		}%
	}%
}
\def\mint@sub#1#2_#3{%
	\@ifnextchar^{%
		\mint@sub@sup{#1}{#2}{#3}%
	}{%
		\mint@{#1}{#2}{#3}{}%
	}%
}
\def\mint@sup#1#2^#3{%
	\@ifnextchar_{%
		\mint@sub@sup{#1}{#2}{#3}%
	}{%
		\mint@{#1}{#2}{}{#3}%
	}%
}
\def\mint@sub@sup#1#2#3^#4{%
	\mint@{#1}{#2}{#3}{#4}%
}
\def\mint@sup@sub#1#2#3_#4{%
	\mint@{#1}{#2}{#4}{#3}%
}
\newcommand*{\mint@}[4]{%
	\mathop{}%
	\mkern-\thinmuskip
	\mathchoice{%
		\mint@@{#1}{#2}{#3}{#4}%
		\displaystyle\textstyle\scriptstyle
	}{%
		\mint@@{#1}{#2}{#3}{#4}%
		\textstyle\scriptstyle\scriptstyle
	}{%
		\mint@@{#1}{#2}{#3}{#4}%
		\scriptstyle\scriptscriptstyle\scriptscriptstyle
	}{%
		\mint@@{#1}{#2}{#3}{#4}%
		\scriptscriptstyle\scriptscriptstyle\scriptscriptstyle
	}%
	\mkern-\thinmuskip
	\int#1%
	\ifx\\#3\\\else_{#3}\fi
	\ifx\\#4\\\else^{#4}\fi
}
\newcommand*{\mint@@}[7]{%
	\begingroup
	\sbox0{$#5\int\m@th$}%
	\sbox2{$#5\int_{}\m@th$}%
	\dimen2=\wd0 %
	\let\mint@limits=#1\relax
	\ifx\mint@limits\relax
	\sbox4{$#5\int_{\kern1sp}^{\kern1sp}\m@th$}%
	\ifdim\wd4>\wd2 %
	\let\mint@limits=\nolimits
	\else
	\let\mint@limits=\limits
	\fi
	\fi
	\ifx\mint@limits\displaylimits
	\ifx#5\displaystyle
	\let\mint@limits=\limits
	\fi
	\fi
	\ifx\mint@limits\limits
	\sbox0{$#7#3\m@th$}%
	\sbox2{$#7#4\m@th$}%
	\ifdim\wd0>\dimen2 %
	\dimen2=\wd0 %
	\fi
	\ifdim\wd2>\dimen2 %
	\dimen2=\wd2 %
	\fi
	\fi
	\rlap{%
		$#5%
		\vcenter{%
			\hbox to\dimen2{%
				\hss
				$#6{#2}\m@th$%
				\hss
			}%
		}%
		$%
	}%
	\endgroup
}
\begin{document}
\title[On the relaxation of integral functionals]{On the relaxation of integral functionals depending on the symmetrized gradient}
\date{\today}

\author[K. Kosiba]{Kamil Kosiba}
\address{Mathematics Institute, University of Warwick, Coventry CV4 7AL, UK}
\email{K.Kosiba@warwick.ac.uk}

\author[F. Rindler]{Filip Rindler}
\address{Mathematics Institute, University of Warwick, Coventry CV4 7AL, UK, and The Alan Turing Institute, British Library, 96 Euston Road, London NW1 2DB, UK}
\email{F.Rindler@warwick.ac.uk}

\subjclass[2010]{49J45}

\begin{abstract}
	We prove results on the relaxation and weak* lower semicontinuity of integral functionals of the form
	\[
	\mathcal{F}[u] :=
	\int_{\Omega}f \Bigl( \frac{1}{2} \bigl( \nabla u(x) + \nabla u(x)^T \bigr) \Bigr)\,\dif x,  \qquad u \colon \Omega \subset \mathbb{R}^d \to \mathbb{R}^d,
	\]
	over the space $\BD(\Omega)$ of functions of bounded deformation or over the Temam--Strang space
	\[
	\UU(\Omega):=\bigl\{u\in\BD(\Omega): \ \div u\in\LL^2(\Omega)\bigr\},
	\]
	depending on the growth and shape of the integrand $f$. Such functionals are interesting for example in the study of Hencky plasticity and related models.

	\medskip
	\noindent \textbf{Keywords:} relaxation, lower semicontinuity, integral functionals, functions of bounded deformation, Hencky plasticity
\end{abstract}

\maketitle

\section{Introduction}\label{sec:introduction}
Let $\Omega\subset\mathbb{R}^d$, $d\geq 2$ be a bounded Lipschitz domain occupied by some elasto-plastic material body and let $u:\Omega\to\mathbb{R}^d$ denote a displacement field. The classical minimization problem in the theory of Hencky plasticity~\cite{Temam85book,AnzellottiGiaquinta80, FuchsSeregin00} involves the following convex functional:
\begin{equation}\label{eq:hencky-intro}
\int_{\Omega}\varphi(\dev\mathcal{E}u)+\frac{\kappa}{2}(\div u)^2\,\dif x,
\end{equation}
where $\varphi:\SD\to[0,+\infty)$ is a function which grows quadratically on some compact set and linearly outside of this set, and $\kappa=\lambda+2\mu/3 > 0$ is the bulk modulus of the material with the Lam{\'e} constants $ \lambda $ and $ \mu $. Here, $\SD$ denotes the space of symmetric and deviatoric matrices in $\mathbb{R}^{d\times d}$ and $\dev A:=A-d^{-1}(\tr A)\id$ is the deviatoric (trace-free) part of a~matrix $A\in\mathbb{R}^{d\times d}$. We also write $\mathcal{E}u$ for the symmetrized gradient, i.e.
\[
\mathcal{E}u := \frac{1}{2}\left(\nabla u+\nabla u^T\right).
\]
The minimization problem~\eqref{eq:hencky-intro} and its relaxation have attracted much attention recently. For instance, in~\cite{BraidesDefranceschiVitali97} the authors studied the same problem with an additional jump penalization term. In~\cite{Mora16} the $\LL^1$-relaxation of~\eqref{eq:hencky-intro} is identified, further generalized in~\cite{JesenkoSchmidt17} to allow integrands for which deviatoric and trace components are not necessarily separated additively. In~\cite{Bojarski97part1, Bojarski97part2} the author investigates the relaxation of Signorini problems in the framework of Hencky's plasticity.

Here we consider the functional~\eqref{eq:hencky-intro} to be generalized to include possibly non-convex integrands,  i.e., we consider functionals of the form
\begin{equation}\label{eq:generalized-intro}
\mathcal{F}[u,\Omega]:=\int_{\Omega}f(\mathcal{E}u)\,\dif x,
\end{equation}
where the continuous integrand $f:\mathbb{R}^{d\times d}_{\mathrm{sym}}\to[0,+\infty)$ satisfies the anisotropic growth conditions
\begin{equation}\label{eq:growth-intro}
m\left((\tr A)^2+|\dev A|\right)
\leq
f(A)
\leq
M\left(1+(\tr A)^2+|\dev A|\right)
\end{equation}
for some constants $0<m\leq M$ and all $A \in \mathbb{R}^{d\times d}_{\mathrm{sym}}$.

A first choice for a function space on which to define~\eqref{eq:generalized-intro} with~\eqref{eq:growth-intro} is the space of integrable functions $u$ with integrable symmetrized distributional derivative $\mathcal{E}u$ and square-integrable distributional divergence, i.e.\
\[
\LU(\Omega)
:=
\left\{
u\in\LL^1(\Omega;\mathbb{R}^d): \ \mathcal{E}u\in\LL^1(\Omega;\mathbb{R}^{d\times d}_{\mathrm{sym}}), \ \div u\in\LL^2(\Omega)
\right\}.
\]
Unfortunately, in this space the direct method of the calculus of variations does not provide any solution to the minimization problem. The culprit is the lack of reflexivity and consequently, the inability to infer the (weak) relative compactness from the norm-boundedness of a minimising sequence.

Therefore, the functional~\eqref{eq:generalized-intro} needs to be extended to account for displacement fields $u$ whose linear strains $Eu$ are measures, since in the space of measures norm-boundedness of minimising sequence implies weak* relative compactness. Then the usual direct method applies. For this, one first introduces the space $\BD(\Omega)$ of \emph{functions of bounded deformation} as the space of all functions $u\in\LL^1(\Omega;\mathbb{R}^d)$ such that the distributional symmetrized derivative $Eu:=\frac{1}{2}(Du+Du^T)$ is representable as a finite Radon measure $Eu\in\M(\Omega;\mathbb{R}^{d\times d}_{\mathrm{sym}})$. Then, the \emph{Temam--Strang space} $\UU(\Omega)$ is the space of functions of bounded deformation with square-integrable divergence, i.e.\
\[
\UU(\Omega)
:=
\left\{
u\in\BD(\Omega): \ \div u\in\LL^2(\Omega)
\right\}.
\]
For more information on $\BD, \UU$ and their applications in the theory of plasticity we refer to~\cite{AmbrosioCosciaDalMaso97, FuchsSeregin00, Kohn82, MatthiesStrangChristiansen79book, Suquet78, Suquet79, TemamStrang80, Temam85book}.

For an integrand $f$ that is additionally symmetric rank-one convex (see below), it was then established in~\cite{JesenkoSchmidt17} that the `continuity extension' of the functional~\eqref{eq:generalized-intro} over the Temam--Strang space is given by
\begin{equation}\label{eq:extension-intro}
\overline{\mathcal{F}}[u,\Omega]
:=
\int_{\Omega}f(\mathcal{E}u)\,\dif x
+
\int_{\Omega}f_{\dev}^{\#}\left(\frac{\dif E^su}{\dif|E^su|}\right)\,\dif|E^su|, \quad u\in\UU(\Omega);
\end{equation}
see Theorem~\ref{thm:jesenko-schmidt} for details. Here, the strain $Eu$ is decomposed into $Eu=E^au+E^su=\mathcal{E}u\mathcal{L}^d\mres\Omega+E^su$ according to the Lebesgue decomposition theorem, $\frac{\dif E^su}{\dif|E^su|}$ is the polar density of the singular part $E^su$ with respect to $|E^su|$, and $f_{\dev}^{\#}$ is the upper recession function of the restriction $f_{\dev}$ of $f$ to the symmetric deviatoric $(d \times d)$-matrices, denoted by $\SD$, i.e.
\begin{equation}\label{eq:recession-intro}
f_{\dev}^{\#}(D)
:=
\limsup\limits_{\substack{D'\to D \\ s\to\infty}}\frac{f_{\dev}(sD')}{s}, \quad D\in\SD.
\end{equation}

As our first result, we extend the previous strong $\LL^1$-relaxation result by Jesenko and Schmidt (see Proposition~3.15 in~\cite{JesenkoSchmidt17}) to a weak* relaxation in the Temam-Strang space with a weaker subcritical lower bound on the integrand. For this, we define the relaxation $\mathcal{F}_{*}$ of $\mathcal{F}$ for $u \in \UU(\Omega)$ as follows:
\begin{equation} \label{eq:relaxU}
\mathcal{F}_{*}[u,\Omega]
:=
\inf\bigg\{ \liminf_{h\to\infty}\mathcal{F}[u_h,\Omega]: \ (u_h)\subset\LU(\Omega), \
u_h\starconv u \text{ in } \UU(\Omega) \bigg\},
\end{equation}
where the weak* convergence is understood in a sense of Definition \ref{def:weak-conv} below.

\newpage

\begin{theorem}\label{thm:main-result}
	Let $\Omega\subset\mathbb{R}^d$ be a bounded Lipschitz domain and let $f:\mathbb{R}^{d\times d}_{\mathrm{sym}}\to[0,\infty)$ be a continuous function satisfying the following conditions:
	\begin{enumerate}
		\item there exist constants $0< m\leq M$ such that for all $A\in\mathbb{R}^{d\times d}_{\mathrm{sym}}$ the growth
		\begin{equation}\label{eq:growth}
		m\left((\tr A)^2+|\dev A|\right)
		\leq
		f(A)
		\leq
		M\left(1+(\tr A)^2+|\dev A|\right)
		\end{equation}
		holds;
		\item $f$ is symmetric-quasiconvex, that is for any bounded Lipschitz domain $\omega\subset\mathbb{R}^d$, any symmetric matrix $A\in\mathbb{R}^{d\times d}_{\mathrm{sym}}$ and any $\psi\in\WW^{1,\infty}_0(\omega;\mathbb{R}^d)$ the inequality
		\[
		|\omega|f(A)\leq\int_{\omega}f(A+\mathcal{E}\psi(y))\,\dif y
		\]
		holds;
		\item\label{enum:lower-bound} there exist constants $\gamma\in[0,2)$ and $\delta\in[0,1)$ such that for all $A\in\mathbb{R}^{d\times d}_{\mathrm{sym}}$ the inequality
		\begin{equation}\label{eq:subcritical-growth}
		f(A)\geq f_{\dev}^{\#}(\dev A)-M\left(|\tr A|^\gamma+|\dev A|^\delta + 1\right)
		\end{equation}
		holds.
	\end{enumerate}
	Then, the functional~\eqref{eq:extension-intro} is the relaxation of~\eqref{eq:generalized-intro} with respect to weak* topology in $\UU(\Omega)$, that is $\mathcal{F}_{*}[u,\Omega] = \overline{\mathcal{F}}[u,\Omega]$.
\end{theorem}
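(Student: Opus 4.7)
The proof splits into the two one-sided estimates $\mathcal{F}_*[u,\Omega]\leq\overline{\mathcal{F}}[u,\Omega]$ and $\mathcal{F}_*[u,\Omega]\geq\overline{\mathcal{F}}[u,\Omega]$. The upper bound I would obtain essentially for free by invoking the strong $\LL^1$-relaxation theorem of Jesenko--Schmidt (Theorem~\ref{thm:jesenko-schmidt}): symmetric quasiconvexity implies symmetric rank-one convexity, so their result produces, for every $u\in\UU(\Omega)$, a recovery sequence $(u_h)\subset\LU(\Omega)$ with $u_h\to u$ in $\LL^1$ and $\mathcal{F}[u_h,\Omega]\to\overline{\mathcal{F}}[u,\Omega]$. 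Since $\sup_h\mathcal{F}[u_h,\Omega]<\infty$, the growth~\eqref{eq:growth} provides uniform bounds on $|Eu_h|(\Omega)$ and $\norm{\div u_h}_{\LL^2}$, so after extracting a subsequence $u_h\starconv u$ in $\UU(\Omega)$ in the sense of Definition~\ref{def:weak-conv}, which delivers the upper bound.

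\textbf{Lower bound via Young measures.}
The substantive part is the weak* lower bound. For any competing sequence $(u_h)\subset\LU(\Omega)$ with $u_h\starconv u$ in $\UU(\Omega)$ and $\sup_h\mathcal{F}[u_h,\Omega]<\infty$, the growth bounds allow me, up to a subsequence, to encode the asymptotic behaviour of $(\mathcal{E}u_h)$ by a generalised Young measure $\bm{\nu}=((\nu_x)_{x\in\Omega},\lambda_{\bm{\nu}},(\nu_x^\infty)_{x\in\Omega})\in\UY(\Omega)$, where $\nu_x$ records the oscillations, $\lambda_{\bm{\nu}}$ the concentration measure, and $\nu_x^\infty$ the concentration profile. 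The crucial structural feature, inherited from $\div u_h\rightharpoonup\div u$ in $\LL^2(\Omega)$, is that no mass escapes to infinity in the trace direction: $\nu_x^\infty$ is supported in $\SD$ for $\lambda_{\bm{\nu}}$-a.e.\ $x$, and the barycentre identity reads
\[
Eu=\langle\mathrm{id},\nu_x\rangle\mathcal{L}^d\mres\Omega+\langle\mathrm{id},\nu_x^\infty\rangle\lambda_{\bm{\nu}}.
\]
Applying Jensen-type inequalities for symmetric-quasiconvex integrands at Lebesgue points of the oscillation part, and for the positively $1$-homogeneous function $f_{\dev}^{\#}$ at points of $\lambda_{\bm{\nu}}$, then reduces the desired bound to
\[
\liminf_{h\to\infty}\mathcal{F}[u_h,\Omega]\geq\int_\Omega\langle f,\nu_x\rangle\,\dif x+\int_\Omega\langle f_{\dev}^{\#},\nu_x^\infty\rangle\,\dif\lambda_{\bm{\nu}}\geq\overline{\mathcal{F}}[u,\Omega],
\]
the last step combining the Jensen inequalities with the above barycentre identity.

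\textbf{Main obstacle.}
The genuine difficulty is the \emph{anisotropic} growth~\eqref{eq:growth}: the deviatoric part of $\mathcal{E}u_h$ may concentrate at linear rate while the trace part is $\LL^2$-equi-integrable, yet both variables appear intertwined inside $f$. The subcritical lower bound~\eqref{eq:subcritical-growth}, with $\gamma<2$ and $\delta<1$, is precisely what is needed to decouple these two effects: in the blow-up argument around a concentration point that identifies $f_{\dev}^{\#}$ as the effective singular density, after dividing by the concentration scale the remainder $M\bigl(|\tr A|^\gamma+|\dev A|^\delta+1\bigr)$ must vanish, and this is guaranteed exactly by $\gamma<2$ (compatible with the $\LL^2$-control of divergences) and $\delta<1$ (strictly subcritical with respect to the linear dev-growth). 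Without this strict subcriticality, borderline cross-terms mixing trace and deviatoric contributions would prevent the identification of the singular density with $f_{\dev}^{\#}$. Turning this heuristic into a rigorous argument---carefully localising the generating Young measure, commuting the limits with the trace/dev splitting, and applying the symmetric-quasiconvex Jensen inequality under the anisotropic growth---constitutes the technical core of the proof.
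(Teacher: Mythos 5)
Your upper-bound argument is essentially the paper's (area-strict smooth approximation from Remark~\ref{rem:area-strict-continuous-extension} plus the Jesenko--Schmidt continuity result), so that part is fine. The genuine gap is in your treatment of the singular part of the lower bound. You reduce it to a ``blow-up argument around a concentration point that identifies $f_{\dev}^{\#}$ as the effective singular density,'' with the subcritical bound~\eqref{eq:subcritical-growth} killing the remainder ``after dividing by the concentration scale.'' This is precisely the strategy the paper explicitly rules out in its introduction: when you rescale a generating sequence $u_h$ around a singular point $x_0$ at scale $r$ and normalise by $\alpha_r=|Eu|(B(x_0,r))/r^d\to\infty$, the $\LL^2$-norm of $\div$ of the blow-up sequence scales like $r^{-d/2}\alpha_r^{-1}\norm{\div u_h}_{\LL^2(B(x_0,r))}$, and since $\alpha_r$ can diverge arbitrarily slowly there is no a priori control of these divergences. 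The subcritical exponents $\gamma<2$, $\delta<1$ do not rescue this: in the paper they are used quite differently, namely to show that the error term $I^{(1)}_{h,r}=\int_{B(x_0,r)}\bigl(f(\mathcal{E}u_h)-f_{\dev}^{\#}(\dev\mathcal{E}u_h)\bigr)\,\dif x$, divided by $|Eu|(B(x_0,r))$, has non-negative lower limit, without any rescaling of the competitors.

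What you are missing is the Kirchheim--Kristensen convexity theorem (Theorem~\ref{thm:kk-convexity}, via Corollary~\ref{cor:kk-corollary}): $f_{\dev}^{\#}$ is positively $1$-homogeneous and symmetric rank-one convex, hence convex at every rank-one point $a\odot b$ with $a\perp b$, so there is a \emph{linear} function $\ell$ on $\SD$ with $\ell\leq f_{\dev}^{\#}$ and $\ell(a\odot b)=f_{\dev}^{\#}(a\odot b)$. The paper replaces your hypothetical Jensen inequality for the concentration part with this linear lower bound, which lets one pass to the weak* limit in $\int_{B(x_0,r)}f_{\dev}^{\#}(\dev\mathcal{E}u_h)\,\dif x\geq\int_{B(x_0,r)}\ell(\dev\mathcal{E}u_h)\,\dif x\to\ell(\dev Eu(B(x_0,r)))$ and then shrink $r$. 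Even if you were to insist on your generalised Young measure framing, the Jensen-type inequality $\langle f_{\dev}^{\#},\nu_x^\infty\rangle\geq f_{\dev}^{\#}(\langle\mathrm{id},\nu_x^\infty\rangle)$ at $\lambda_{\bm{\nu}}$-a.e.\ $x$ is not a free consequence of symmetric-quasiconvexity; it again hinges on the rank-one structure of the barycentre (Theorem~\ref{thm:bd-rank-one}) together with Kirchheim--Kristensen, so you would land on exactly the ingredients the paper uses but which your proposal never names. Finally, your proposal also omits the De Giorgi--Letta step (Lemma~\ref{lem:measure-property}) showing that $V\mapsto\mathcal{F}_*[u,V]$ is a Radon measure; the paper needs this to make sense of the decomposition into regular and singular parts, and establishing it requires a nontrivial Bogovskii-type gluing to keep divergences in $\LL^2$, which your Young-measure sketch silently assumes away.
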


\begin{remark}
	The lower bound with subcritical growth in both trace and deviatoric directions in the condition (3) is essential for the proof. It remains an open question whether it can be deduced from the conditions (1) and (2).
\end{remark}

It does not seem possible to prove Theorem~\ref{thm:main-result} using the blow-up argument for both regular and singular estimates as in the usual $\BV$ lower semicontinuity results~\cite{FonsecaMuller93,AmbrosioDalMaso92}. The classical blow-up argument was tailored for functionals with an isotropic linear growth imposed on the integrands. This, however, is not the case here, as the admissible integrands in Theorem~\ref{thm:main-result} grow quadratically in the trace direction. The problem is that if one attempts to utilise the blow-up argument at singular points, one eventually faces the problem of controlling the blow-up rate of the divergence terms of the blow-up sequence. A priori it seems not possible to obtain a sufficient decay of this sequence of divergences, and so a different strategy based on the Kirchheim--Kristensen convexity result~\cite{KirchheimKristensen11,KirchheimKristensen16} needs to be employed.

As our second result, we  give  a refined relaxation theorem in $\BD$ for homogeneous integrands, improving the results of~\cite{Rindler11,BarrosoFonsecaToader00,A-RDePR2017} to an essentially optimal (under the following growth conditions) result:

\begin{theorem}\label{thm:lsc-bd}
	Let $\Omega\subset\mathbb{R}^d$ be a bounded Lipschitz domain and let $f:\mathbb{R}^{d\times d}_{\mathrm{sym}}\to[0,\infty)$ be a continuous function such that there exist constants $0<m\leq M$, for which the inequality
	\begin{equation} \label{eq:growth-bd}
	m|A|\leq f(A)\leq M(1+|A|), \quad A\in\mathbb{R}^{d\times d}_{\mathrm{sym}},
	\end{equation}
	holds.
	Then, the functional
	\begin{align*}
	\overline{\mathcal{F}}[u,\Omega]
	&:=
	\int_{\Omega}(SQf)(\mathcal{E}u)\,\dif x
	+
	\int_{\Omega}(SQf)^{\#}\left(\frac{\dif E^su}{\dif |E^su|}\right)\,\dif|E^su|, \quad u\in\BD(\Omega)
	\end{align*}
	is the relaxation of the functional
	\begin{align*}
	\mathcal{F}[u,\Omega]
	&:=
	\int_{\Omega}f(\mathcal{E}u)\,\dif x, \quad u\in\LD(\Omega):=\BD(\Omega)\cap\{u: \ E^su=0\}
	\end{align*}
	with respect to the weak* topology in $\BD(\Omega)$.
\end{theorem}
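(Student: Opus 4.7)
The plan is to split the argument into the usual two bounds: weak* lower semicontinuity of $\overline{\mathcal{F}}$ along sequences in $\LD(\Omega)$, and construction of a recovery sequence in $\LD(\Omega)$ for an arbitrary target $u\in\BD(\Omega)$. Throughout, I will use that the symmetric-quasiconvex envelope $SQf$ inherits the linear growth~\eqref{eq:growth-bd} (with possibly enlarged $M$) and that its recession function $(SQf)^{\#}$ is continuous, positively $1$-homogeneous, and symmetric-quasiconvex.

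\emph{Lower bound.} For $u_h\in\LD(\Omega)$ with $u_h\starconv u$ in $\BD(\Omega)$, the pointwise inequality $f\geq SQf$ gives
\[
\liminf_{h\to\infty}\mathcal{F}[u_h,\Omega]
\geq
\liminf_{h\to\infty}\int_\Omega (SQf)(\mathcal{E}u_h)\,\dif x.
\]
Since $\mathcal{E}u_h\,\mathcal{L}^d\starconv Eu$ in $\M(\Omega;\mathbb{R}^{d\times d}_{\mathrm{sym}})$, the task reduces to weak* lower semicontinuity on $\BD(\Omega)$ of the symmetric-quasiconvex functional associated with $SQf$. I would obtain this using the framework of generalized Young measures for $\mathcal{E}$-strain measures combined with the Kirchheim--Kristensen result that symmetric-quasiconvex integrands of linear growth are convex at infinity; the latter forces $(SQf)^{\#}$ to behave convexly on singular tangents, which is exactly what substitutes for the blow-up argument at singular points.

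\emph{Upper bound.} Fix $u\in\BD(\Omega)$ and construct the recovery sequence in two stages. First, a standard mollification-plus-boundary-correction argument produces $w_\varepsilon\in\CC^{\infty}(\overline{\Omega};\mathbb{R}^d)\subset\LD(\Omega)$ with $w_\varepsilon\to u$ strictly in $\BD$, i.e.\ $w_\varepsilon\to u$ in $\LL^1$ and $|Ew_\varepsilon|(\Omega)\to|Eu|(\Omega)$. Since $SQf$ is continuous with continuous positively $1$-homogeneous recession $(SQf)^{\#}$, Reshetnyak continuity yields
\[
\int_\Omega(SQf)(\mathcal{E}w_\varepsilon)\,\dif x \;\longrightarrow\; \overline{\mathcal{F}}[u,\Omega].
\]
Second, for each $w_\varepsilon$, partition $\Omega$ into small cubes $Q$ on which $w_\varepsilon$ is nearly affine with symmetric gradient $A_Q:=\mathcal{E}w_\varepsilon(x_Q)$, and invoke the cell-formula characterization of $SQf$ to pick, after a rescaling that makes the oscillations fine, test fields $\psi_{Q,k}\in\WW^{1,\infty}_0(Q;\mathbb{R}^d)$ with $\frac{1}{|Q|}\int_Q f(A_Q+\mathcal{E}\psi_{Q,k}(y))\,\dif y \to (SQf)(A_Q)$. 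Because each $\psi_{Q,k}$ vanishes on $\partial Q$, the integration by parts $\int_Q \mathcal{E}\psi_{Q,k}\,\dif y=0$ guarantees that patching produces $v_{\varepsilon,k}\in\LD(\Omega)$ with $v_{\varepsilon,k}\starconv w_\varepsilon$ in $\BD$ and $\int_\Omega f(\mathcal{E}v_{\varepsilon,k})\,\dif x\to\int_\Omega(SQf)(\mathcal{E}w_\varepsilon)\,\dif x$. A diagonal extraction finishes the construction.

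The principal obstacle is the lower semicontinuity step: the classical blow-up method used in $\BV$ does not transfer to $\BD$ because Korn's inequality fails in $\LL^1$, so a singular concentration of $Eu_h$ need not come from a gradient concentration and cannot be analysed by ordinary rescaling. The Kirchheim--Kristensen convexity at infinity, together with the Young-measure calculus for $\mathcal{E}$-free measures as in~\cite{Rindler11,A-RDePR2017}, is the substitute that allows the hypothesis on $f$ to be reduced to continuity and linear growth with no further shape assumption, giving the essentially optimal statement.
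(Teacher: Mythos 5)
Your lower bound has a genuine gap. You propose to argue via the generalized Young measure calculus of~\cite{Rindler11,A-RDePR2017} together with the Kirchheim--Kristensen convexity-at-infinity result; but this is precisely the route that the theorem is designed to improve upon. Theorem~1.1 of~\cite{Rindler11} and Corollary~1.10 of~\cite{A-RDePR2017} both assume that the \emph{strong} recession function $(SQf)^\infty$ exists, and that hypothesis cannot be deduced from the growth condition~\eqref{eq:growth-bd} alone (cf.~\cite{Muller92}). Without it, the Young measure representation on the concentration part involves a limit that need not coincide with the upper recession $(SQf)^{\#}$, and your citation re-imports exactly the assumption the theorem removes. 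The paper in fact deliberately avoids Kirchheim--Kristensen in the $\BD$ case: at an $|E^su|$-typical point $x_0$ it blows up to a one-directional profile $v_0(y)=h(y\cdot a)b+c(a\otimes b)y+Wy+\bar v$ using Theorem~\ref{thm:bd-rank-one} together with Lemma~2.14 of~\cite{DePhilippisRindler16sec}, cuts off to impose $v_0$-boundary data, and then invokes the periodic blow-up Lemma~\ref{lem:lower-est}, which by $\lfloor\cdot\rfloor$-periodization and rescaling reduces the singular estimate to the linear-limit Lemma~\ref{lem:linear-bd}. Only symmetric-quasiconvexity and Lipschitz continuity of $f$ enter, no Kirchheim--Kristensen and no recession hypothesis. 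Your proposal is missing a substitute for this mechanism.

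On the upper bound the outline is essentially the paper's, but two points must be corrected. Reshetnyak continuity (Theorem~\ref{thm:reshetnyak}) applies to integrands in $\mathrm{D}(\mathbb R^{d\times d}_{\mathrm{sym}})$, i.e.\ with a \emph{strong} recession, so it cannot be invoked directly for $SQf$; the paper closes this via the pointwise approximation Lemma~\ref{lem:pointwise-approximation} (a decreasing sequence $f_k\downarrow SQf$ with $f_k^\infty\downarrow(SQf)^{\#}$) followed by monotone convergence, a step you omit. You should also demand \emph{area-strict}, not merely strict, convergence of the mollified approximants. Finally, your cube-decomposition patching argument to pass from $SQf$ back to $f$ in effect reproves the $\LD$-relaxation of~\cite{BarrosoFonsecaToader00}; the paper instead quotes that reference once at the outset and assumes $f=SQf$ throughout, which is shorter.
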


Here, the relaxation $\mathcal{F}_{*}$ of $\mathcal{F}$ is defined as
\begin{equation} \label{eq:relaxBD}
\mathcal{F}_{*}[u,\Omega]:=\inf\left\{\liminf_{h\to\infty} \mathcal{F}[u_h,\Omega]: \ (u_h)\subset\LD(\Omega), \ u_h\starconv u \text{ in } \BD(\Omega)\right\}.
\end{equation}
Moreover, $SQf:\mathbb{R}^{d\times d}_{\mathrm{sym}}\to[0,\infty)$ is the \emph{symmetric-quasiconvex envelope}, defined by
\begin{equation*}
SQf(A)
:=
\inf\left\{ \mint{-}_{D}f(A+\mathcal{E}\psi(y))\,\dif y: \ \psi\in\WW^{1,\infty}_0(D;\mathbb{R}^d) \right\}.
\end{equation*}
The set $D\subset\mathbb{R}^d$ in the above formula is an arbitrary bounded Lipschitz domain.

In Theorem~1.1 in~\cite{Rindler11}, only a weak* lower semicontinuity result, and not a full relaxation result, was established under the assumption that the strong recession function $f^{\infty}$ exists. Our Theorem~\ref{thm:lsc-bd} extends~\cite{BarrosoFonsecaToader00} and also Corollary~1.10 in~\cite{A-RDePR2017} to a relaxation theorem without any assumption on the recession function. We note that in view of Theorem~2 in~\cite{Muller92}, one can construct a function satisfying~\eqref{eq:growth-bd}, for which $f^{\infty}$ does not exist.

\begin{remark}
	In Theorems~\ref{thm:main-result} and \ref{thm:lsc-bd} the weak upper recession functions $f_{\mathrm{dev}}^{\#}$ and $(SQf)^{\#}$ respectively are evaluated only on matrices from the symmetric rank-one cone. As the integrands $f$ and $SQf$ are symmetric-quasiconvex, hence symmetric rank-one convex, by a simple convexity argument one can show that the upper recession functions agree with the lower recession functions (with the lower limit in place of the upper limit). This means that in the statements of Theorems~\ref{thm:main-result} and \ref{thm:lsc-bd} we could use the lower recession functions instead (which, in a sense, are more natural for lower semicontinuity results). However, while $(SQf)^{\#}$ can easily be seen to be symmetric quasiconvex (by Fatou's lemma), this is not possible for the lower recession function; see also Remarks~8,~9 in~\cite{Rindler11} for further discussion of this subtle point.
\end{remark}

\section*{Acknowledgements}
The authors would like to thank Martin Jesenko, Bernd Schmidt, Jan Kristensen and Wojciech O{\.z}a{\'n}ski for many fruitful discussions related to this work.

This project has received funding from the European Research Council (ERC) under the European Union's Horizon 2020 research and innovation programme, grant agreement No 757254 (SINGULARITY). K.\ K.\ also gratefully acknowledges the financial support provided by the EPSRC as a part of the MASDOC DTC at the University of Warwick (EP/HO23364/1).

\section{Preliminaries}\label{sec:prelims}
By $\mathbb{R}^d$ we denote the $d$-dimensional Euclidean space with $d\geq 1$. We write $B(x,r)$ for an open ball, $\overline{B(x,r)}$ for a closed ball and $\partial B(x,r)$ for a sphere centered at $x\in\mathbb{R}^d$ with the radius $r>0$. For any matrix $A\in\mathbb{R}^{d\times d}$ its \emph{deviatoric projection} is defined as $\dev A:=A-d^{-1}(\tr A)\id$, where $\id\in\mathbb{R}^{d\times d}$ is the identity matrix. The set of all symmetric and deviatoric matrices in $\mathbb{R}^{d\times d}$ is denoted by
\[
\SD:=\{M\in\mathbb{R}^{d\times d}_{\mathrm{sym}}: \ \tr M=0 \}.
\]

In this paper we always assume that $\Omega\subset\mathbb{R}^d$ is an open bounded Lipschitz domain, unless stated otherwise.

We write $\LL^p(\Omega)$, $\LL^p(\Omega;X)$, $\LL^p_{loc}(\Omega)$, etc. for the Lebesgue spaces and $\WW^{k,p}(\Omega)$, $\WW^{k,p}(\Omega;X)$, $\WW^{k,p}_g(\Omega)$, etc. for the Sobolev spaces with suitable exponents.

\subsection{Measure theory}\label{subsec:measure-theory}
We write $\mathcal{B}(X)$ for the Borel $\sigma$-algebra on a topological space $X$. The $d$-dimensional Lebesgue measure is denoted by $\mathcal{L}^d$ and for the $\mathcal{L}^d$-measurable set $A\subseteq\mathbb{R}^d$ we occasionally write $|A|$ instead of $\mathcal{L}^d(A)$.

The cone of (finite) Radon measures is denoted by $\M^+(\mathbb{R}^d)$ and its subspace of probability measures is denoted by $\M^1(\mathbb{R}^d)$. We also use local versions of these spaces denoted by $\MlocPos(\mathbb{R}^d)$ and $\MlocProb(\mathbb{R}^d)$, where the measures restricted to any compact set $K\subset\mathbb{R}^d $ are in $\M^+(K)$ or $\M^1(K)$, respectively.

The following theorem provides a simple criterion for a set function to be a Radon measure (for the proof see Theorem~1.53 in~\cite{AmbrosioFuscoPallara00}).
\begin{theorem}[De Giorgi-Letta]\label{thm:degiorgi-letta}
	Let $X$ be a metric space and let $\mathcal{U}(X)$ denote the set of open subsets of $X$. Let $\mu:\mathcal{U}(X)\to[0,\infty]$ be a set function such that
	\begin{enumerate}
		\item $\mu(\emptyset)=0$;
		\item (monotonicity) for $ A,B\in\mathcal{U}(X) $ if $A\subset B$ then $\mu(A)\leq\mu(B)$;
		\item (subadditivity) for $ A,B\in\mathcal{U}(X) $ it holds that $ \mu(A\cup B)\leq\mu(A)+\mu(B) $;
		\item (superadditivity) for $ A,B\in\mathcal{U}(X) $ with $A\cap B=\emptyset$ it holds that $ \mu(A\cup B)\geq\mu(A)+\mu(B) $;
		\item (inner regularity) $\mu(A)=\sup\left\{\mu(B): \ B\in\mathcal{U}(X), \ B\Subset A \right\}$.
	\end{enumerate}
	Then, the extension of $\mu$ to every $B\subset X$ defined by
	\[
	\mu(B):=\inf\left\{ \mu(A): \ A\in\mathcal{U}(X), \ A\supset B\right\}
	\]
	is an outer measure. In particular, the restriction of $\mu$ to the Borel $ \sigma $-algebra is a positive measure.
\end{theorem}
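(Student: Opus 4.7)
The plan is to follow the classical Carathéodory construction: first promote the hypotheses on $\mathcal{U}(X)$ to countable subadditivity of $\mu$ on open sets, then show the outer extension is an outer measure, and finally verify that every open set is Carathéodory measurable so that the Borel $\sigma$-algebra sits inside the measurable sets.

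First I would upgrade the finite subadditivity in (3) to \emph{countable} subadditivity on $\mathcal{U}(X)$. Given open sets $A_1, A_2, \ldots$ with union $A$, I would use inner regularity (5) to pick, for any $\varepsilon > 0$, an open $B \Subset A$ with $\mu(B) \geq \mu(A) - \varepsilon$. Since $\overline{B}$ is compact and is covered by the $A_n$, there is a finite subcover $A_1, \ldots, A_N$, and then iterating (3) together with monotonicity (2) gives $\mu(B) \leq \sum_{n=1}^N \mu(A_n) \leq \sum_n \mu(A_n)$. Letting $\varepsilon \to 0$ produces $\mu(A) \leq \sum_n \mu(A_n)$. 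Monotonicity of the outer extension is immediate from its infimum definition, and countable subadditivity of the extension follows by the standard $\varepsilon/2^n$ trick: for each $B_n$ choose open $A_n \supset B_n$ with $\mu(A_n) \leq \mu(B_n) + \varepsilon/2^n$ and apply the countable subadditivity on opens just established to $\bigcup A_n$. Together with $\mu(\emptyset) = 0$ from (1), this gives the outer-measure property.

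Next I would prove that every open set $U \in \mathcal{U}(X)$ is Carathéodory $\mu$-measurable, i.e.\ $\mu(T) \geq \mu(T \cap U) + \mu(T \setminus U)$ for every test set $T \subset X$. By the definition of the outer extension it suffices to prove this when $T$ itself is open. In that case $T \cap U$ is open, so by (5) one can take open $B \Subset T \cap U$ with $\mu(B)$ arbitrarily close to $\mu(T \cap U)$. The key observation is that $B$ and $T \setminus \overline{B}$ are \emph{disjoint} open subsets of $T$, so monotonicity and the superadditivity axiom (4) give
\[
\mu(T) \geq \mu(B \cup (T \setminus \overline{B})) \geq \mu(B) + \mu(T \setminus \overline{B}) \geq \mu(B) + \mu(T \setminus U),
\]
where the last inequality uses $\overline{B} \subset U$ and monotonicity of the outer extension. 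Taking the supremum over $B \Subset T \cap U$ yields the measurability inequality.

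With open sets being $\mu$-measurable, the family of measurable sets is a $\sigma$-algebra (by Carathéodory's theorem applied to the outer measure constructed above) that contains all opens, hence contains $\mathcal{B}(X)$. The restriction of $\mu$ to this $\sigma$-algebra is then $\sigma$-additive, giving a positive Borel measure. The main obstacle is the passage from finite to countable subadditivity on open sets in the first step, which is the only place where inner regularity (5) together with the compactness encoded in $\Subset$ enters in an essential way; without it one cannot pass from a cover of $A$ by the $A_n$ to a \emph{finite} subcover of a good inner approximation of $A$, and so cannot apply the finite subadditivity hypothesis. The verification that opens are Carathéodory measurable is the second delicate point, as it requires the combination of inner regularity with superadditivity (4) on the disjoint pair $(B,\, T\setminus\overline{B})$.
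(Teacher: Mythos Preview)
The paper does not prove this classical result itself but simply refers to Theorem~1.53 in \cite{AmbrosioFuscoPallara00}; your proposal follows the standard Carath\'eodory-construction proof and is essentially the argument given in that reference. The only point worth flagging is that your finite-subcover step tacitly assumes $B\Subset A$ entails compactness of $\overline{B}$, which is indeed the intended meaning of $\Subset$ in this paper (and in \cite{AmbrosioFuscoPallara00}), so the step is justified.
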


Let $\mu$ be a positive Radon measure in an open set $\Omega\subset\mathbb{R}^d$ and let $k\geq 0$. We define the \emph{upper $k$-density} of $\mu$ at $x\in\Omega$ as
\[
\Theta^*_k(\mu,x):=\limsup_{r\downarrow 0}\frac{\mu(B(x,r))}{\omega_kr^k},
\]
where $\omega_k:=\pi^{k/2}\Gamma(1+k/2)$ is the Lebesgue measure of a unit ball in $\mathbb{R}^k$.

The following result (see Theorem~2.56 in~\cite{AmbrosioFuscoPallara00} for the proof) asserts that the upper $k$-density can be used to estimate the measure $\mu$ from below by the $k$-dimensional Hausdorff measure $\mathcal{H}^k$.
\begin{proposition}\label{prop:lowerHausdorffBound}
	Let $\Omega\subset\mathbb{R}^d$ be an open set and let $\mu$ be a positive Radon measure in $\Omega$. Then, for any $0<t<\infty$ and any Borel set $B\subset\Omega$ the implication
	\[
	\Theta^*_k(\mu,x)\geq t \quad \forall\; x\in B
	\implies
	\mu\geq t\mathcal{H}^k\mres B
	\]
	holds.
\end{proposition}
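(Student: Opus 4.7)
My plan is to promote the pointwise density hypothesis into a measure inequality via a Vitali-type covering argument. First, by outer regularity of the Radon measure $\mu$, it will suffice to prove $\mu(A) \geq t\,\mathcal{H}^k(B \cap A)$ for every open set $A \subset \Omega$; the general statement $\mu \geq t\,\mathcal{H}^k \mres B$ on all Borel sets then follows routinely by writing any Borel set $E$ as an intersection of open supersets. I would also reduce, by exhausting $B \cap A$ from below by Borel subsets of finite $\mathcal{H}^k$-mass, to the case where $\mathcal{H}^k(B \cap A) < \infty$; this reduction is harmless but will be needed below.

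Fix such an $A$ and parameters $\epsilon \in (0, t)$ and $\delta > 0$. For each $x \in B \cap A$, the openness of $A$ combined with the assumption $\Theta^*_k(\mu, x) \geq t$ furnishes arbitrarily small radii $r \in (0, \delta)$ such that $\overline{B(x,r)} \subset A$ and $\mu(\overline{B(x,r)}) \geq (t - \epsilon)\,\omega_k r^k$. Collecting all such closed balls produces a fine cover $\mathcal{F}$ of $B \cap A$ by closed balls contained in $A$.

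The crucial step is then to apply the Vitali--Besicovitch covering theorem with respect to the finite Radon measure $\mathcal{H}^k \mres (B \cap A)$. This yields a countable disjoint subfamily $\{\overline{B(x_i, r_i)}\}_i \subset \mathcal{F}$ whose complement in $B \cap A$ is $\mathcal{H}^k$-null. Since each selected ball has diameter at most $2\delta$, the Hausdorff pre-measure satisfies
\[
\mathcal{H}^k_{2\delta}(B \cap A) \;\leq\; \sum_i \omega_k r_i^k \;\leq\; \frac{1}{t - \epsilon}\sum_i \mu\bigl(\overline{B(x_i, r_i)}\bigr) \;\leq\; \frac{\mu(A)}{t - \epsilon},
\]
where the last step uses that the balls are disjoint and contained in $A$. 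Sending $\delta \downarrow 0$ and then $\epsilon \downarrow 0$ yields $\mathcal{H}^k(B \cap A) \leq \mu(A)/t$, which is the desired inequality.

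The main obstacle I foresee is obtaining the sharp constant $1$. The elementary $5r$-covering lemma would deliver only the weaker bound $\mathcal{H}^k(B \cap A) \leq 5^k \mu(A)/t$; the sharp factor is recovered only by using Vitali--Besicovitch with the reference measure $\mathcal{H}^k \mres (B \cap A)$, which ensures that the extracted disjoint subfamily exhausts $B \cap A$ up to an $\mathcal{H}^k$-null set rather than merely up to a dilated cover. This is precisely why the preliminary reduction to $\mathcal{H}^k(B \cap A) < \infty$ matters, since otherwise $\mathcal{H}^k \mres (B \cap A)$ fails to be a Radon measure and Vitali--Besicovitch is no longer directly applicable.
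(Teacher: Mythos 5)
The paper does not prove this proposition; it cites Theorem~2.56 of Ambrosio--Fusco--Pallara \cite{AmbrosioFuscoPallara00}, and your Vitali--Besicovitch covering argument, taken with respect to the reference measure $\mathcal{H}^k\mres(B\cap A)$, is precisely the mechanism by which the sharp constant is obtained in that reference. In that respect you have reproduced the standard proof, and the outer-regularity reduction to open $A$ and the estimate $\mathcal{H}^k_{2\delta}(B\cap A)\le\sum_i\omega_k r_i^k\le(t-\epsilon)^{-1}\mu(A)$ are all in order.

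One step, however, is dismissed too quickly. You reduce to the case $\mathcal{H}^k(B\cap A)<\infty$ by ``exhausting $B\cap A$ from below by Borel subsets of finite $\mathcal{H}^k$-mass'' and call this harmless. But when $\mathcal{H}^k(B\cap A)=\infty$ and $B\cap A$ is not $\sigma$-finite with respect to $\mathcal{H}^k$ (which can happen for $k<d$), the identity
\[
\mathcal{H}^k(B\cap A)=\sup\bigl\{\mathcal{H}^k(E):\ E\subset B\cap A\ \text{Borel},\ \mathcal{H}^k(E)<\infty\bigr\}
\]
that your exhaustion presupposes is not automatic: it rests on the Besicovitch--Davies theorem that every analytic set of positive (possibly infinite) $\mathcal{H}^k$-measure contains a compact subset of positive \emph{finite} $\mathcal{H}^k$-measure, iterated to drive the supremum up to infinity. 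This is true, but it is a non-trivial ingredient and deserves to be named. A cleaner way to close the gap without invoking Besicovitch--Davies is to handle the infinite case separately: with the same fine cover, the elementary $5r$-covering lemma already yields the crude bound $\mathcal{H}^k_{10\delta}(B\cap A)\le 5^k(t-\epsilon)^{-1}\mu(A)$, so letting $\delta\downarrow 0$ shows that $\mathcal{H}^k(B\cap A)=\infty$ forces $\mu(A)=\infty$, and the desired inequality is then vacuous. The sharp Vitali--Besicovitch argument is only needed when $\mathcal{H}^k(B\cap A)<\infty$, where $\mathcal{H}^k\mres(B\cap A)$ is a genuine finite Radon measure and the theorem applies as you use it. With this amendment your proof is complete.
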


We also use vector-valued Borel measures $\mu:\mathcal{B}(\mathbb{R}^d)\to\mathbb{R}^N$, which are $\sigma$-additive set functions with $\mu(\emptyset)=0$. The space of all such vector measures is denoted by $\M(\mathbb{R}^d;\mathbb{R}^N)$. The space of local vector measures is denoted by $\Mloc(\mathbb{R}^d;\mathbb{R}^N)$. For a vector measure $\mu\in\M(\mathbb{R}^d;\mathbb{R}^N)$ we define its \emph{total variation measure} $|\mu|\in\M^+(\mathbb{R}^d)$ for every Borel set $S \subset \Omega$ by
\[
|\mu|(S)
:=
\sup\left\{ \sum_{k\in\mathbb{N}}|\mu(S_k)|: \ S=\bigcup_{k\in\mathbb{N}}S_k, \ \{S_k\} \text{ is a Borel partition of } S \right\}.
\]
The \emph{restriction} of a measure $\mu\in\Mloc(\mathbb{R}^d;\mathbb{R}^N)$ to a Borel set $B\in\mathcal{B}(\mathbb{R}^d)$ is defined as $\mu\mres B(S):=\mu(B\cap S)$ for all relatively compact Borel sets $S\in\mathcal{B}(\mathbb{R}^d)$.

For a positive measure $\mu$ on a locally compact separable metric space $X$, the \emph{support} of $\mu$, in symbols $\supp \mu$, is the closed set  of all points $x\in X$ such that $ \mu(U)>0 $ for every neighbourhood $U$ of $x$. For a vector measure $\nu$ we define its support to be the support of its total variation measure $|\nu|$.

\begin{theorem}[Besicovitch differentiation theorem]
	\label{thm:besicovitch}
	Let $\mu\in\M(\mathbb{R}^d;\mathbb{R}^N)$ be a vector-valued Radon measure and let $\nu\in\M^+(\mathbb{R}^d)$ be a positive Radon measure. Then for $\nu$-a.e.\ $x_0\in\mathbb{R}^d$ in the support of $\nu$, the limit
	\[
	\frac{\dif\mu}{\dif\nu}(x_0):=\lim_{r\downarrow 0}\frac{\mu(B(x_0,r))}{\nu(B(x_0,r))}
	\]
	exists and is called the \emph{Radon-Nikodym derivative} of $ \mu $ with respect to $ \nu $.

	Moreover, we have the Lebesgue decomposition of $\mu=\frac{\dif\mu}{\dif\nu}\nu+\mu^s$, where $\mu^s=\mu\mres E$ is singular with respect to $ \nu $ and
	\[
	E=(\mathbb{R}^d\setminus\supp\nu)\cup\left\{x\in\supp\nu: \ \lim_{r\downarrow 0}\frac{|\mu|(B(x,r))}{\nu(B(x,r))}=\infty \right\}.
	\]
\end{theorem}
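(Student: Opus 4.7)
The plan is to reduce the vector-valued case to a pair of positive Radon measures, and then apply the Besicovitch covering theorem in the differentiation-theoretic form that extracts a countable, $\nu$-almost disjoint subfamily from any fine cover by closed balls. The statement then splits into two independent tasks: (i) prove that $\lim_{r \downarrow 0}\mu(B(x_0,r))/\nu(B(x_0,r))$ exists and is finite for $\nu$-a.e.\ $x_0 \in \supp\nu$, and (ii) identify the Lebesgue-decomposition structure by locating the precise singular set $E$.

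For (i), I first reduce to the scalar case by applying the result componentwise, and then to positive measures via the Hahn--Jordan decomposition of each component. Given two positive Radon measures $\alpha,\nu$, I define the upper and lower derivatives
\[
\overline{D}(x):=\limsup_{r\downarrow 0}\frac{\alpha(B(x,r))}{\nu(B(x,r))}, \qquad \underline{D}(x):=\liminf_{r\downarrow 0}\frac{\alpha(B(x,r))}{\nu(B(x,r))}, \quad x\in\supp\nu,
\]
and for rationals $0 \le a < b$ consider the Borel set $E_{a,b}:=\{\underline{D}<a<b<\overline{D}\}$. Localizing to $E_{a,b}\cap B(0,k)$ to ensure finite $\nu$-measure, I then use the definitions of $\overline{D}$ and $\underline{D}$ to produce two fine covers of $E_{a,b}$: one by balls on which $\alpha(B) > b\,\nu(B)$, the other by balls on which $\alpha(B) < a\,\nu(B)$. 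The Besicovitch differentiation lemma yields $\nu$-almost disjoint countable subcovers contained in an arbitrary open neighborhood of $E_{a,b}$, and passing to the outer-regular infimum over such neighborhoods produces
\[
b\,\nu(E_{a,b}) \le \alpha(E_{a,b}) \le a\,\nu(E_{a,b}),
\]
forcing $\nu(E_{a,b})=0$ because $b>a$. Taking a countable union over rational $a<b$ gives $\overline{D}=\underline{D}$ $\nu$-a.e., and an analogous one-sided cover argument applied to $\{\overline{D}>c\}$ for $c\to\infty$ shows finiteness $\nu$-a.e.

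For (ii), define $E_{\infty}:=\bigl\{x\in\supp\nu:\ \limsup_{r\downarrow 0}|\mu|(B(x,r))/\nu(B(x,r))=\infty\bigr\}$ and set $E:=(\mathbb{R}^d\setminus\supp\nu)\cup E_{\infty}$. The same covering argument applied to $|\mu|$ in place of $\alpha$ shows $\nu(E_{\infty})=0$, so $\mu\mres E$ is automatically singular with respect to $\nu$. On $E^c$ the derivative $\dif\mu/\dif\nu$ from step (i) is finite $\nu$-a.e., and a final Besicovitch-cover argument on the level sets of $\dif\mu/\dif\nu$ identifies $\mu\mres E^c$ with the absolutely continuous measure $(\dif\mu/\dif\nu)\,\nu$, completing the decomposition $\mu = (\dif\mu/\dif\nu)\nu + \mu\mres E$.

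The main obstacle is the two-sided fine-cover argument in step (i); without the Besicovitch covering theorem's $\xi_d$-overlap bound (or its $\nu$-almost disjoint refinement) one cannot convert pointwise oscillation of $\alpha(B)/\nu(B)$ into a genuine inequality between $\alpha(E_{a,b})$ and $\nu(E_{a,b})$. In contrast, the componentwise reduction, the identification of $E_\infty$, and the level-set argument for the absolutely continuous part are routine once the central covering estimate is available.
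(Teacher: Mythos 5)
The paper does not prove this theorem itself; it simply cites Theorem~2.22 of Ambrosio--Fusco--Pallara. Your outline reproduces exactly the standard argument used there: reduce to scalar positive measures via the Jordan decomposition, compare upper and lower derivatives on the sets $E_{a,b}$ via a Besicovitch--Vitali fine-cover argument to force $\overline{D}=\underline{D}$ and finiteness $\nu$-a.e., and then identify the singular set $E$ and the absolutely continuous part $\mu\mres E^{c}=\bigl(\dif\mu/\dif\nu\bigr)\nu$. So this is the same route as the paper's reference, and it is essentially correct.

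One small imprecision worth flagging: you describe both subcovers as ``$\nu$-almost disjoint,'' but the two sides of the sandwich on $E_{a,b}$ require the Vitali-type refinement of Besicovitch to be applied with \emph{different} base measures. To get $b\,\nu(E_{a,b})\le\alpha(E_{a,b})$ you extract a disjoint family covering $E_{a,b}$ up to a $\nu$-null set, so that $\sum_i\nu(B_i)\ge\nu(E_{a,b})$ while $\sum_i\alpha(B_i)\le\alpha(U)$; to get $\alpha(E_{a,b})\le a\,\nu(E_{a,b})$ you must instead extract a disjoint family covering $E_{a,b}$ up to an $\alpha$-null set, so that $\sum_i\alpha(B_i)\ge\alpha(E_{a,b})$ while $\sum_i\nu(B_i)\le\nu(U)$. (If one insists on using only the $\xi_d$-overlap version, the resulting inequalities carry a dimensional constant and the conclusion $\nu(E_{a,b})=0$ only follows for $b>\xi_d^2 a$, which would need an extra limiting step.) Likewise, the final identification of $\mu\mres E^{c}$ with $\bigl(\dif\mu/\dif\nu\bigr)\nu$ is compressed: it needs the observation that $\mu\mres E^{c}\ll\nu$ (from the boundedness of the density ratios on the level sets $\{\overline{D}\le n\}$), an application of Radon--Nikodym, and then a Lebesgue-point/differentiation argument to match the Radon--Nikodym density with the pointwise derivative from step (i), together with the fact that the derivative of the singular piece vanishes $\nu$-a.e. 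None of these are fatal omissions, but they are precisely the places where the details have to be filled in carefully.
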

For the proof, see Theorem~2.22 in \cite{AmbrosioFuscoPallara00}. See also Theorem~5.52 in \cite{AmbrosioFuscoPallara00} for a more general version, where a ball $B(x_0,r)$ can be replaced with a set $x_0+rC$ for any open convex set $C\subset\mathbb{R}^d$ containing the origin.

\subsection{Function spaces}\label{subsec:function-spaces}
In this section we briefly recall definitions and basic properties of the space of functions with bounded deformation and its subspace with square-integrable distributional divergences, called the Temam--Strang space.

\subsubsection{Functions of bounded deformation.}\label{subsubsec:space-BD}
In applications coming from plasticity theory~\cite{Suquet78,Suquet79,TemamStrang80} one is often concerned with the class of functions
\[
\LD(\Omega):=\left\{ u\in\LL^1(\Omega;\mathbb{R}^d): \ \mathcal{E}u\in\LL^1(\Omega;\mathbb{R}^{d\times d}_{\mathrm{sym}}) \right\},
\]
where $\mathcal{E}u:=\frac{1}{2}(\nabla u+\nabla u^T)$ is the distributional symmetrized gradient of a mapping $u:\Omega\to\mathbb{R}^d$. The space $\LD(\Omega)$ is a Banach space when endowed with the norm
\[
\norm{u}_{\LD}:=\norm{u}_{\LL^1}+\norm{\mathcal{E}u}_{\LL^1}.
\]
However, in general we cannot infer weak relative compactness from boundedness, since $\LD(\Omega)$ is not reflexive. If a bounded sequence in $\LD(\Omega)$ has equiintegrable symmetric gradients, then in virtue of the Dunford--Pettis theorem, we could infer the weak relative compactness. The equiintegrability, however, is rare in applications, so we need to consider a larger space instead.

Therefore, we define the space $\BD(\Omega)$ of \emph{functions of bounded deformation}~\cite{Suquet78,Suquet79,TemamStrang80,AmbrosioCosciaDalMaso97} as the space of all functions $u\in\LL^1(\Omega;\mathbb{R}^d)$ such that the distributional symmetrized derivative is representable as a finite Radon measure $Eu\in\M(\Omega;\mathbb{R}^{d\times d}_{\mathrm{sym}})$. The space $\BD(\Omega)$ is a Banach space when endowed with the norm
\[
\norm{u}_{\BD}:=\norm{u}_{\LL^1}+|Eu|(\Omega).
\]
According to the Lebesgue decomposition theorem, we split the measure $Eu$ into
\[
Eu=\mathcal{E}u\mathcal{L}^d+E^su,
\]
where $\mathcal{E}u:=\frac{\dif Eu}{\dif\mathcal{L}^d}\in\LL^1(\Omega,\mathcal{L}^d;\mathbb{R}^{d\times d}_{\mathrm{sym}})$ is the Radon-Nikodym derivative of $Eu$ with respect to the Lebesgue measure $\mathcal{L}^d$ (called the \emph{approximate symmetrized gradient}) and $E^su\perp\mathcal{L}^d$ is the \emph{singular part} of $Eu$.

We have the following $\BD$-analogue of Alberti's rank-one theorem in $\BV$ (cf.~\cite{Alberti93,MassaccesiVittone16}).
\begin{theorem}\label{thm:bd-rank-one}
	Let $\Omega\subset\mathbb{R}^d$ be an open set and let $u\in\BD(\Omega)$. Then, for $|E^su|$-a.e.\ $ x\in\Omega $, there exist $a(x),b(x)\in\mathbb{R}^d\setminus\{0\}$ such that
	\[
	\frac{\dif E^su}{\dif|E^su|}(x)=a(x)\odot b(x),
	\]
	where $ a\odot b:=(a\otimes b+b\otimes a)/2 $ denotes the symmetrized tensor product.
\end{theorem}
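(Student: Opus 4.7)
My plan is to realise $\mu := Eu \in \M(\Omega;\mathbb{R}^{d\times d}_{\mathrm{sym}})$ as a measure satisfying a constant-coefficient linear PDE constraint and then to invoke the De Philippis--Rindler structure theorem for $\mathcal{A}$-free measures, which asserts that the polar $\frac{\dif\mu}{\dif|\mu|}$ must lie in the wave cone $\Lambda_{\mathcal{A}}$ of $\mathcal{A}$ at $|\mu^s|$-almost every point. The rank-one conclusion will then follow once $\Lambda_{\mathcal{A}}$ is identified with the symmetric rank-one cone.

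First, a direct distributional computation, starting from $2(Eu)_{ij} = \partial_i u_j + \partial_j u_i$ in $\mathcal{D}'(\Omega)$, shows that $\mu = Eu$ satisfies the Saint--Venant compatibility system
\[
(\mathcal{A}\mu)_{ijkl} := \partial_{kl}\mu_{ij} + \partial_{ij}\mu_{kl} - \partial_{ik}\mu_{jl} - \partial_{jl}\mu_{ik} = 0 \quad \text{in } \mathcal{D}'(\Omega).
\]
Next I would compute the principal symbol
\[
[\mathbb{A}(\xi)M]_{ijkl} = \xi_k\xi_l M_{ij} + \xi_i\xi_j M_{kl} - \xi_i\xi_k M_{jl} - \xi_j\xi_l M_{ik};
\]
contracting $\mathbb{A}(\xi)M = 0$ with $\xi^{j} \xi^{l}$ yields
\[
|\xi|^4 M_{ik} = |\xi|^2 \bigl( \xi_k (M\xi)_i + \xi_i (M\xi)_k \bigr) - \xi_i \xi_k (\xi^{T} M \xi),
\]
which rearranges to $M = a \odot \xi$ with $a := 2|\xi|^{-2} M\xi - |\xi|^{-4} (\xi^{T} M \xi) \xi$. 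Hence $\Lambda_{\mathcal{A}} = \{ a \odot b : a, b \in \mathbb{R}^d \setminus \{0\} \}$. Finally, the Besicovitch differentiation theorem cited above guarantees that $\frac{\dif E^s u}{\dif |E^s u|}$ exists $|E^s u|$-almost everywhere, and applying the De Philippis--Rindler theorem to $Eu$ restricted to its singular set places this polar in $\Lambda_{\mathcal{A}}$; since the polar has unit norm, the resulting vectors $a(x), b(x)$ are automatically nonzero.

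The principal obstacle is the invocation of the De Philippis--Rindler structure theorem, whose proof rests on a delicate dimensional estimate for $\mathcal{A}$-free measures obtained via an iterative blow-up and Fourier-multiplier analysis. A more hands-on alternative is via a slicing characterisation of $\BD$ in the spirit of Ambrosio--Coscia--Dal Maso: for each $\xi \in S^{d-1}$ the scalar function $t \mapsto \xi \cdot u(x_{0} + t\xi)$ is of bounded variation along $\mathcal{H}^{d-1}$-a.e.\ line parallel to $\xi$, so Alberti's scalar $\BV$ rank-one theorem applies to these one-dimensional slices. Varying $\xi$ over a countable dense subset of $S^{d-1}$ and piecing together the resulting constraints on $\frac{\dif E^{s} u}{\dif|E^{s} u|}$ should also force the $a \odot b$ structure, though the ensuing bookkeeping effectively replicates the wave-cone computation above.
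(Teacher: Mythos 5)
Your main route --- realising $Eu$ as an $\mathcal{A}$-free measure for the second-order Saint--Venant compatibility operator, computing that the wave cone $\Lambda_{\mathcal{A}}$ lies in the symmetric rank-one cone, and invoking the De Philippis--Rindler structure theorem --- is exactly the argument the paper has in mind: the paper simply cites~\cite{DePhilippisRindler16}, where the $\BD$ rank-one property is established this way. Your contraction of $\mathbb{A}(\xi)M=0$ against $\xi_j\xi_l$ is correct and does give $M=a\odot\xi$ (note you only need and only proved the inclusion $\Lambda_{\mathcal{A}}\subseteq\{a\odot b\}$, not the claimed equality), and the unit-length normalisation of the polar forces $a,b\neq 0$.

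The ``more hands-on alternative'' you sketch, however, would not succeed and should not be offered as a fallback. Alberti's rank-one theorem is vacuous in one dimension: a scalar $\BV$ function of one variable has derivative a scalar measure, so there is no rank structure to constrain. The Ambrosio--Coscia--Dal Maso slicing for $\BD$ controls, for each fixed $\xi\in S^{d-1}$, only the scalar quantity $\xi^{T}(Eu)\xi$ through the one-dimensional derivatives of $t\mapsto\xi\cdot u(y+t\xi)$; these traces are compatible with many polars that are not of the form $a\odot b$, and no way is known of piecing them together to pin down $\frac{\dif E^su}{\dif|E^su|}$. This is precisely why the rank-one property in $\BD$ remained open for roughly two decades after~\cite{AmbrosioCosciaDalMaso97} and was only resolved by the Fourier-multiplier and dimensional-estimate machinery of~\cite{DePhilippisRindler16}; asserting that the bookkeeping ``effectively replicates the wave-cone computation'' glosses over exactly the step that required the new tools.
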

For the proof, see~\cite{DePhilippisRindler16}.

\subsubsection{Temam--Strang space.}\label{subsubsec:space-T-S}
For the theory of elasto-plasticity in the geometrically linear setting the class of functions defined as
\[
\LU(\Omega)
:=
\left\{ u\in\LD(\Omega): \ \div u\in\LL^2(\Omega) \right\}
\]
becomes a natural choice~\cite{FuchsSeregin00,Temam85book,JesenkoSchmidt17,ContiOrtiz05}. Unfortunately, the space $\LU(\Omega)$ inherits the poor compactness property of $\LD(\Omega)$ and again, it is reasonable to look for a larger space which could be used instead of $\LU(\Omega)$ to overcome this issue. Therefore we define the \emph{Temam--Strang space} $\UU(\Omega)$ as a subspace of $\BD(\Omega)$:
\[
\UU(\Omega)
:=
\left\{u\in \BD(\Omega): \ \div u\in \LL^2(\Omega) \right\}.
\]
The space $ \UU(\Omega) $ is endowed with the norm
\[
\norm{u}_{\UU}
:=
\norm{u}_{\BD}+\norm{\div u}_{\LL^2},
\]
which turns it into a Banach space. Similarly to the space $\BD$, one usually works in weaker topologies than the norm topology. We distinguish three such topologies in the following.

\begin{definition}[Weak* convergence]\label{def:weak-conv}
	We say that $ (u_h)\subset\UU(\Omega) $ converges \emph{weakly*} to $u$ in $\UU(\Omega)$ if $ u_h\to u $ strongly in $\LL^1(\Omega;\mathbb{R}^d)$, $ Eu_h\starconv Eu $ weakly* in $ \M(\Omega;\mathbb{R}^{d\times d}_{\mathrm{sym}}) $ and $ \div u_h\rightharpoonup \div u $ weakly in $\LL^2(\Omega)$.
\end{definition}

We have the following simple fact.
\begin{lemma}\label{lem:convInU}
	Let $(u_h)\subset\UU(\Omega)$ be a sequence such that $u_h\to u$ strongly in $\LL^1(\Omega;\mathbb{R}^d)$ and $(u_h)$ is uniformly norm-bounded in $\UU(\Omega)$. Then, $(u_h)$ converges weakly* to $u$ in $\UU(\Omega)$.
\end{lemma}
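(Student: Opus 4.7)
The plan is to verify the three conditions in Definition~\ref{def:weak-conv} separately, using that $u_h\to u$ in $\LL^1$ is given and that the remaining two convergences follow from bounded sequences in appropriate weakly(-*) sequentially compact spaces, combined with uniqueness of distributional limits.

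First I would argue that $Eu_h\starconv Eu$ in $\M(\Omega;\mathbb{R}^{d\times d}_{\mathrm{sym}})$. By the uniform bound $\|u_h\|_{\UU}\leq C$ we have $|Eu_h|(\Omega)\leq C$, so $(Eu_h)$ is bounded in $\M(\Omega;\mathbb{R}^{d\times d}_{\mathrm{sym}})$. Since $\CC_0(\Omega;\mathbb{R}^{d\times d}_{\mathrm{sym}})$ is separable and $\M(\Omega;\mathbb{R}^{d\times d}_{\mathrm{sym}})$ is its dual, the Banach--Alaoglu theorem provides, along any subsequence, a further subsequence $Eu_{h_k}\starconv\mu$ for some $\mu\in\M(\Omega;\mathbb{R}^{d\times d}_{\mathrm{sym}})$. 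On the other hand, testing against $\varphi\in\CC^\infty_c(\Omega;\mathbb{R}^{d\times d}_{\mathrm{sym}})$ and integrating by parts, $u_h\to u$ in $\LL^1$ forces $Eu_h\to Eu$ in the sense of distributions. Hence $\mu=Eu$, the limit is independent of the subsequence, and the full sequence converges weakly*.

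Next I would establish $\div u_h\rightharpoonup \div u$ weakly in $\LL^2(\Omega)$ in essentially the same way. The sequence $(\div u_h)$ is bounded in the reflexive space $\LL^2(\Omega)$, so every subsequence has a further weakly convergent subsequence with some limit $v\in\LL^2(\Omega)$. Again $u_h\to u$ in $\LL^1$ gives $\div u_h\to\div u$ distributionally, and weak $\LL^2$-convergence implies distributional convergence, so $v=\div u$. Uniqueness of the limit gives convergence of the entire sequence. Together with the given $\LL^1$-convergence of $(u_h)$, this shows that $u\in\UU(\Omega)$ and that $u_h\starconv u$ in $\UU(\Omega)$ in the sense of Definition~\ref{def:weak-conv}.

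There is no real obstacle here; the lemma is a routine compactness/uniqueness argument. The only minor subtlety is the standard ``subsequence of every subsequence'' step needed to upgrade weak(-*) sequential precompactness to convergence of the whole sequence, which is automatic once the distributional limit identifies $\mu=Eu$ and $v=\div u$.
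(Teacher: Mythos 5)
Your proof is correct and follows exactly the approach the paper has in mind: it appeals to Proposition~3.13 in Ambrosio--Fusco--Pallara, whose argument is precisely the boundedness-plus-distributional-identification scheme you use (weak*/weak sequential compactness from the uniform bound, identification of the subsequential limits via integration by parts against $\CC^\infty_c$ test functions and the given $\LL^1$-convergence, then uniqueness of the limit to upgrade to convergence of the full sequence). The only addition over the $\BD$ case is handling $\div u_h$ in $\LL^2$ by the same reflexivity-plus-uniqueness argument, which you do correctly.
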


Note that the same result holds for a sequence in $\BD(\Omega)$ and both statements can be proved similarly to the proof of Proposition~3.13 in~\cite{AmbrosioFuscoPallara00}.

\begin{definition}[Strict convergence]
	We say that a sequence $ (u_h)\subset\UU(\Omega) $ converges \emph{strictly} to $u$ in $\UU(\Omega)$ if $ u_h\to u $ strongly in $\LL^1(\Omega;\mathbb{R}^d)$, $ |Eu_h|(\Omega)\to |Eu|(\Omega) $ and $ \div u_h\to \div u $ strongly in $\LL^2(\Omega)$.
\end{definition}

For a measure $\mu\in\M(\mathbb{R}^d;\mathbb{R}^d)$ with the Lebesgue decomposition
\[
\mu
=
\frac{\dif\mu}{\dif\mathcal{L}^d}\mathcal{L}^d+\mu^s
\]
we define a Borel measure $\langle\mu\rangle:\mathcal{B}(\mathbb{R}^d)\to[0,\infty]$ by
\[
\langle\mu\rangle(A)
:=
\int_{A}\sqrt{1+\left|\frac{\dif\mu}{\dif\mathcal{L}^d}\right|^2}\,\dif x+|\mu^s|(A).
\]
\begin{definition}[Area-strict convergence]
	We say that $ (u_h)\subset\UU(\Omega) $ converges \emph{area-strictly} to $u$ in $\UU(\Omega)$ if $ u_h\to u $ strictly, $ \langle Eu_h\rangle(\Omega)\to \langle Eu\rangle(\Omega) $ and $ \langle \dev Eu_h\rangle(\Omega)\to \langle \dev Eu\rangle(\Omega) $.
\end{definition}
The last type of convergence is particularly important, as it allows approximation of functions in $\UU(\Omega)$ by smooth functions (which is not possible in the norm topology), see Remark~\ref{rem:area-strict-continuous-extension}.

For $u\in\UU(\Omega)$ we have that $ \dev E^su=E^su $, since the trace part of $ Eu $, which is equal to $\div u$, is absolutely continuous with respect to the Lebesgue measure $\mathcal{L}^d$.

\subsection{Generalized convexity}\label{subsec:gen-conv}
In this section we recall some information about weaker notions of convexity. These convexity notions are symmetric counterparts of the usual quasiconvexity in the sense of Morrey~\cite{Morrey52} and rank-one convexity.

\begin{definition}\label{def:sym-qc}
	Let $f:\mathbb{R}^{d\times d}_{\mathrm{sym}}\to\mathbb{R}$ be a locally bounded Borel function. We call $f$ \emph{symmetric-quasiconvex}, provided that for all bounded Lipschitz domains $D\subset\mathbb{R}^d$, all test functions $\psi\in\WW^{1,\infty}_0(D;\mathbb{R}^d)$ and all matrices $A\in\mathbb{R}^{d\times d}_{\mathrm{sym}}$ the inequality
	\begin{equation}\label{eq:symQC}
	f(A)
	\leq
	\frac{1}{|D|}\int_D f(A+\mathcal{E}\psi(y))\,\dif y
	\end{equation}
	holds.
\end{definition}
If the function $f$ additionally satisfies an asymptotic growth condition of the form $|f(A)|\leq C(1+|A|^p)$, $p\in[1,\infty)$, then it is sufficient to test the above inequality with $\psi\in\WW^{1,p}_0(D;\mathbb{R}^d)$ instead of Lipschitz functions (the proof is analogous to Lemma~7.1 in~\cite{Rindler18book}, also cf.~Proposition~3.4 in~\cite{FonsecaMuller99}).
\begin{definition}[Symmetric-quasiconvex envelope]\label{def:SQ-envelope}
	Let $f:\mathbb{R}^{d\times d}_{\mathrm{sym}}\to\mathbb{R}$ be a Borel function. Then, the \emph{symmetric-quasiconvex envelope} $SQf:\mathbb{R}^{d\times d}_{\mathrm{sym}}\to\mathbb{R}\cup\{-\infty\}$ is defined as
	\begin{equation}\label{eq:envelope}
	SQf(A)
	:=
	\inf\left\{ \mint{-}_{D}f(A+\mathcal{E}\psi(y))\,\dif y: \ \psi\in\WW^{1,\infty}_0(D;\mathbb{R}^d) \right\}.
	\end{equation}
\end{definition}
\begin{remark}\label{rem:properties-envelope}\mbox{}
	\begin{enumerate}
		\item By the Vitali covering argument one can show that the inequality~\eqref{eq:symQC} and the formula~\eqref{eq:envelope} are independent of the choice of the domain $D$ (cf.~Lemma~5.2(i) in~\cite{Rindler18book}). See also Proposition~5.11 in \cite{Dacorogna08book} for a different proof.
		\item For a non-negative continuous function $f$ with $p$-growth, $1\leq p<\infty$, the symmetric-quasiconvex envelope $SQf$ is symmetric-quasiconvex and also has $p$-growth (see Lemma~7.1 in~\cite{Rindler18book}).
		\item For a function $f$ as in (2), we can equivalently express the symmetric-quasiconvex envelope of $f$ as the greatest symmetric-quasiconvex function, no larger than $f$, i.e.
		\[
		SQf(A)
		=
		\sup\left\{ g(A): \ g \text{ is symmetric-quasiconvex and } g\leq f \right\}.
		\]
	\end{enumerate}
\end{remark}

\begin{definition}\label{def:sym-r1c}
	Let $f:\mathbb{R}^{d\times d}_{\mathrm{sym}}\to\mathbb{R}$ be a locally bounded Borel function. We call $f$ \emph{symmetric rank-one convex} if
	\[
	\mathbb{R}\ni t\mapsto f(A+ta\odot b)
	\]
	is convex for all $A\in\mathbb{R}^{d\times d}_{\mathrm{sym}}$ and all $a,b\in\mathbb{R}^d$.
\end{definition}

By the one-directional oscillations argument, similar to the one in the proof of Proposition~5.3 in \cite{Rindler18book}, one can prove that for a symmetric-quasiconvex function $f:\mathbb{R}^{d\times d}_{\mathrm{sym}}\to\mathbb{R}$ the inequality
\[
f(\theta A+(1-\theta)B)
\leq
\theta f(A)+(1-\theta)f(B)
\]
holds for $A,B\in\mathbb{R}^{d\times d}_{\mathrm{sym}}$ with $B-A=a\odot b$ for some $a,b\in\mathbb{R}^d$ and $\theta\in[0,1]$. This is equivalent to $f$ being symmetric rank-one convex.

The following convexity result for positively 1-homogeneous functions in conjunction with the $\BD$-analogue of Alberti's rank-one theorem (cf.~Theorem~\ref{thm:bd-rank-one}) plays an important role in the study of the singular part of the relaxation $\mathcal{F}_{*}$ of $\mathcal{F}$.
\begin{theorem}[Kirchheim-Kristensen~\cite{KirchheimKristensen16}]\label{thm:kk-convexity}
	Let $\mathcal{C}$ be an open convex cone in a normed finite-dimensional real vector space $\mathcal{V}$, and let $\mathcal{D}$ be a cone of directions in $\mathcal{V}$ such that $\mathcal{D}$ spans $\mathcal{V}$.

	If $f:\mathcal{C}\to\mathbb{R}$ is $\mathcal{D}$-convex (i.e., its restrictions to line segments in $\mathcal{C}$ in directions of $\mathcal{D}$ are convex) and positively 1-homogeneous, then $f$ is convex at each point of $\mathcal{C}\cap\mathcal{D}$.

	More precisely, and in view of homogeneity, for each $x_0\in\mathcal{C}\cap\mathcal{D}$ there exists a linear function $\ell:\mathcal{V}\to\mathbb{R}$ satisfying $\ell(x_0)=f(x_0)$ and $f\geq\ell$ on $\mathcal{C}$.
\end{theorem}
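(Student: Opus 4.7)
The plan is to reduce the conclusion to proving that the convex envelope $\tilde f\colon\mathcal{C}\to\mathbb{R}$ of $f$ coincides with $f$ at every point of $\mathcal{C}\cap\mathcal{D}$. Once this is established, the supporting hyperplane theorem applied to $\tilde f$ (which is convex on the open set $\mathcal{C}$ and hence admits an affine minorant touching it at $x_0$) combined with positive $1$-homogeneity of $\tilde f$ (which upgrades the affine support to a linear one) produces the desired $\ell$. Using Carathéodory's theorem together with homogeneity (writing $\lambda_if(x_i)=f(\lambda_ix_i)$ and setting $y_i:=\lambda_ix_i$), one rewrites the envelope as
\[
\tilde f(x_0)=\inf\left\{\sum_{i=1}^{n}f(y_i)\ :\ 1\leq n\leq d+1,\ y_i\in\mathcal{C},\ \sum_{i=1}^{n}y_i=x_0\right\},
\]
where $d:=\dim\mathcal{V}$.

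Next, using the continuity of $f$ and compactness (after normalizing via positive $1$-homogeneity), I would show that the infimum above is attained, possibly after allowing some summands to degenerate so that the effective number of terms drops. The claim is then that for $x_0\in\mathcal{C}\cap\mathcal{D}$, any minimizing decomposition with the smallest possible $n$ must in fact satisfy $n=1$, giving $\tilde f(x_0)=f(x_0)$. To prove this, assume for contradiction that $n\geq 2$, and for each pair $i\neq j$ and each $v\in\mathcal{D}$ consider the perturbation $(y_i,y_j)\mapsto(y_i+tv,y_j-tv)$, which preserves the sum $\sum_ky_k=x_0$ and stays in $\mathcal{C}^n$ for $t$ in a small open interval (by openness of $\mathcal{C}$). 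The $\mathcal{D}$-convexity of $f$ forces each map
\[
t\longmapsto f(y_i+tv)+f(y_j-tv)
\]
to be convex on this interval, and the overall minimality of the decomposition then tightly constrains the behaviour of these convex families.

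The main obstacle will be extracting, from this rigidity — valid for every $v\in\mathcal{D}$ and every pair $(i,j)$ — the conclusion that every $y_i$ is a nonnegative multiple of $x_0$, which would contradict the assumed minimality of $n$. This is where both hypotheses are essential: positive $1$-homogeneity supplies pointwise algebraic identities (via Euler's relation at points of differentiability, which are abundant since $\mathcal{D}$-convexity along a spanning cone yields local Lipschitz regularity), while the spanning property of $\mathcal{D}$ turns directional information along $\mathcal{D}$ into information on the full gradient. A secondary technicality is handling boundary effects when some $y_i$ lie on $\partial\mathcal{C}$, which I would control by an interior approximation on compactly contained subcones followed by a compactness argument to pass to the limit without increasing the sum $\sum_kf(y_k)$.
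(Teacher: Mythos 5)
This theorem is not proved in the paper; it is quoted verbatim from Kirchheim--Kristensen~\cite{KirchheimKristensen16}, so there is no ``paper's own proof'' to compare against. Evaluating your sketch on its own terms, the opening reduction is correct but is also merely an equivalent reformulation: the convex envelope $\tilde f$ of $f$ on $\mathcal{C}$ satisfies $\tilde f(x_0)=f(x_0)$ if and only if $f$ admits an affine (and, by homogeneity, linear) support at $x_0$, which is precisely the assertion to be proved. The Carath\'eodory rewriting of $\tilde f$ and the normalization by homogeneity are fine, and attainability can plausibly be handled by a compactness argument as you suggest.

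The genuine gap is the step you flag yourself: passing from first-order optimality of a minimizing decomposition to the conclusion that $n=1$. Even in the idealized case where $f$ is $C^1$ near every $y_i$, the perturbation argument only delivers $\langle Df(y_i),v\rangle=\langle Df(y_j),v\rangle$ for all $v\in\mathcal{D}$ and hence, since $\mathcal{D}$ spans $\mathcal{V}$, a common gradient $\ell:=Df(y_1)=\dots=Df(y_n)$. Euler's identity then gives $\ell(y_i)=f(y_i)$ and so $\sum_i f(y_i)=\ell(x_0)$. But to conclude you would need $f(x_0)\le \ell(x_0)$, i.e.\ that $\ell$ is a global linear minorant of $f$ touching it at $x_0$; yet $\mathcal{D}$-convexity only yields $f(y_i+tv)\ge f(y_i)+t\,\ell(v)$ for $v\in\mathcal{D}$ and small $|t|$ --- a local estimate along $\mathcal{D}$-directions, not a global supporting inequality, and $x_0-y_i$ need not lie in $\mathcal{D}$. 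So first-order optimality of the Carath\'eodory decomposition does not force the $y_i$ to be multiples of $x_0$, nor does it produce the required linear support; the intended contradiction does not materialize. This is not a small technicality to be finessed: it is exactly the content of the theorem, and the argument in \cite{KirchheimKristensen16} is substantially more delicate than an optimality analysis of the convex envelope. As it stands your proposal restates the problem and identifies where the difficulty lives, but does not supply an idea that would close it.
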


We also record the following simple fact.
\begin{proposition}\label{prop:span-of-SD}
	The set of symmetric and deviatoric matrices $\SD$ is spanned by the set
	\[
	\mathcal{S}
	:=
	\left\{ a\odot b: \ a,b\in\mathbb{R}^d, \ a\cdot b=0 \right\}.
	\]
\end{proposition}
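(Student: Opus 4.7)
The plan is to exhibit an explicit spanning set (in fact a basis) of $\SD$ consisting of symmetrized tensor products of mutually orthogonal vectors. First I would note the trivial inclusion: any element $a\odot b\in\mathcal{S}$ is automatically symmetric, and since $\tr(a\odot b)=a\cdot b=0$ when $a\perp b$, one has $\mathcal{S}\subset\SD$. The question is therefore whether $\mathrm{span}\,\mathcal{S}$ fills out all of $\SD$, whose dimension is $\tfrac{d(d+1)}{2}-1$.

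Let $\{e_1,\dots,e_d\}$ be the standard orthonormal basis of $\mathbb{R}^d$. I would split the argument into off-diagonal and diagonal contributions. For $1\le i<j\le d$ the vectors $e_i$ and $e_j$ are orthogonal, so $e_i\odot e_j\in\mathcal{S}$, and these $\tfrac{d(d-1)}{2}$ matrices furnish a basis for the subspace of symmetric matrices with zero diagonal. For the trace-free diagonal part, I would use the following orthogonal pair trick: for $i\neq j$ the vectors $e_i+e_j$ and $e_i-e_j$ satisfy $(e_i+e_j)\cdot(e_i-e_j)=0$, and a direct computation gives
\[
(e_i+e_j)\odot(e_i-e_j) \;=\; e_i\otimes e_i - e_j\otimes e_j,
\]
which lies in $\mathcal{S}$. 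Fixing $i=d$ and letting $j=1,\dots,d-1$ produces $d-1$ linearly independent trace-free diagonal matrices, which is exactly the dimension of the space of diagonal matrices with vanishing trace.

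Combining the two groups yields $\tfrac{d(d-1)}{2}+(d-1)=\tfrac{d(d+1)}{2}-1$ linearly independent elements of $\mathcal{S}$, which matches $\dim\SD$, so they form a basis. This shows $\mathrm{span}\,\mathcal{S}=\SD$ and completes the proof. There is essentially no obstacle here: the only thing to be careful about is exhibiting a diagonal trace-free element as a single symmetrized product of orthogonal vectors, which is handled by the $(e_i+e_j,e_i-e_j)$ construction above.
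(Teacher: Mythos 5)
Your proof is correct. The paper states this as a ``simple fact'' and gives no proof, so there is nothing to compare against; your explicit basis construction is exactly the kind of argument one would expect here. The two key observations are both right: the off-diagonal matrices $e_i\odot e_j$ ($i<j$) lie in $\mathcal{S}$ and span the off-diagonal symmetric matrices, and $(e_i+e_j)\odot(e_i-e_j)=e_i\otimes e_i-e_j\otimes e_j$ with $(e_i+e_j)\perp(e_i-e_j)$ produces the $d-1$ trace-free diagonal generators; the count $\tfrac{d(d-1)}{2}+(d-1)=\dim\SD$ closes the argument.
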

We draw the following important conclusion from Theorem~\ref{thm:kk-convexity} and Proposition~\ref{prop:span-of-SD}.
\begin{corollary}\label{cor:kk-corollary}
	A symmetric rank-one convex and positively 1-homogeneous function $f:\SD\to\mathbb{R}$ is convex at each point of the symmetric rank-one cone $\mathcal{S}$.
\end{corollary}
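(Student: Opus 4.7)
The plan is to read off Corollary~\ref{cor:kk-corollary} as a direct application of the Kirchheim--Kristensen convexity theorem (Theorem~\ref{thm:kk-convexity}) with the specific choices $\mathcal{V} := \SD$, $\mathcal{C} := \SD$ (which is an open convex cone inside itself, since $\SD$ is a finite-dimensional real vector space), and $\mathcal{D} := \mathcal{S}$, the symmetric rank-one cone in $\SD$ from Proposition~\ref{prop:span-of-SD}. Since $\mathcal{C}\cap\mathcal{D}=\mathcal{S}$, if the hypotheses of the theorem are met, we immediately obtain the existence of an affine function $\ell$ with $\ell(x_0)=f(x_0)$ and $f\geq\ell$ on $\SD$ for every $x_0\in\mathcal{S}$, which is precisely the claimed convexity of $f$ at each point of $\mathcal{S}$.

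The verification of the hypotheses splits into four small checks. First, $\mathcal{S}$ is a cone: if $a\odot b\in\mathcal{S}$ (so $a\cdot b=0$) and $\lambda>0$, then $\lambda(a\odot b)=(\lambda a)\odot b$ with $(\lambda a)\cdot b=0$, so $\lambda(a\odot b)\in\mathcal{S}$. Second, $\mathcal{S}$ spans $\SD$, which is exactly the content of Proposition~\ref{prop:span-of-SD}. Third, $f$ is positively 1-homogeneous by assumption, and this datum is fed directly into Theorem~\ref{thm:kk-convexity}.

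The only remaining check is $\mathcal{D}$-convexity, i.e.\ that for every $A\in\SD$ and every $a\odot b\in\mathcal{S}$ the map $t\mapsto f(A+t\,a\odot b)$ is convex on $\mathbb{R}$. Here two small points need to be made explicit: (a) the segment $A+t(a\odot b)$ stays inside $\SD$, since
\[
\tr\bigl(A+t\,a\odot b\bigr)=\tr A + t(a\cdot b)=0,
\]
so $f$ is actually defined along the entire line; and (b) convexity of this one-variable function is exactly the symmetric rank-one convexity of $f$ in the sense of Definition~\ref{def:sym-r1c}, applied to the symmetric rank-one tensor $a\odot b$.

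There is no genuine obstacle: once $\mathcal{V}$, $\mathcal{C}$ and $\mathcal{D}$ are identified correctly, the corollary is an immediate specialization of Theorem~\ref{thm:kk-convexity}. The only mildly delicate bookkeeping is realizing that the ambient space in Theorem~\ref{thm:kk-convexity} should be taken to be $\SD$ itself (not $\mathbb{R}^{d\times d}_{\mathrm{sym}}$), which is what allows $\mathcal{D}=\mathcal{S}$ to span $\mathcal{V}$ and forces the line segments tested by symmetric rank-one convexity to remain inside the domain of $f$.
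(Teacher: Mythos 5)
Your proposal is correct and takes essentially the same route as the paper, which presents the corollary as an immediate consequence of Theorem~\ref{thm:kk-convexity} and Proposition~\ref{prop:span-of-SD} without spelling out the verification. Your choice of $\mathcal{V}=\mathcal{C}=\SD$ and $\mathcal{D}=\mathcal{S}$ is the right one, and the two small checks you make explicit --- that $A+t\,a\odot b$ stays in $\SD$ precisely because $\tr(a\odot b)=a\cdot b=0$, and that the resulting one-variable convexity is exactly symmetric rank-one convexity restricted to $\SD$ --- are indeed the only points that need attention; the one minor slip is describing $\ell$ as affine when Theorem~\ref{thm:kk-convexity} in fact gives a linear $\ell$ (as is natural for a positively 1-homogeneous function), but this is immaterial to the argument.
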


\subsection{Functionals}\label{subsec:functionals}
The functional
\begin{equation}\label{eq:non-relaxed}
\LU(\Omega)\ni u
\mapsto \mathcal{F}[u,\Omega]
:=
\int_{\Omega}f(x,\mathcal{E}u(x))\,\dif x
\end{equation}
can be extended to the functional
\begin{align}\label{eq:relaxed}
\begin{split}
\UU(\Omega)\ni u
\mapsto
\overline{\mathcal{F}}[u,\Omega]
&:=
\int_{\Omega}f(x,\mathcal{E}u)\,\dif x
+
\int_{\Omega}f_{\dev}^{\#}\left(x,\frac{\dif E^su}{\dif |E^su|}\right)\,\dif|E^su|.
\end{split}
\end{align}
The following theorem was proved by Jesenko and Schmidt~\cite{JesenkoSchmidt17}:
\begin{theorem}\label{thm:jesenko-schmidt}
	Let $f:\Omega\times\mathbb{R}^{d\times d}_{\mathrm{sym}}\to[0,\infty)$ be a continuous function satisfying the following conditions:
	\begin{enumerate}
		\item\label{enum:hencky-growth} there exist constants $0<m\leq M$ such that for all $(x,A)\in\Omega\times\mathbb{R}^{d\times d}_{\mathrm{sym}}$ the growth estimates
		\begin{equation}\label{eq:hencky-growth}
		m((\tr A)^2+|\dev A|)
		\leq
		f(x,A)
		\leq
		M(1+(\tr A)^2+|\dev A|)
		\end{equation}
		hold;
		\item\label{enum:sym-r1-convexity} $f(x,\cdot)$ is symmetric rank-one convex;
		\item\label{enum:hencky-recession} for every fixed $D\in\SD$ the map $x\mapsto f_{\dev}^{\#}(x,D)$ is continuous; here $f_{\dev}^{\#}$ is the recession function of the restriction $f_{\dev}:=f|_{\Omega\times\SD}$ defined by
		\begin{equation}\label{eq:hencky-recession}
		f_{\dev}^{\#}(x,D)
		:=
		\limsup\limits_{\substack{D'\to D \\ s\to\infty}}\frac{f_{\dev}(x,sD')}{s}.
		\end{equation}
	\end{enumerate}
	Then, the functional~\eqref{eq:non-relaxed} extends continuously, with respect to the area-strict convergence in $\UU(\Omega)$, to the functional~\eqref{eq:relaxed}.
\end{theorem}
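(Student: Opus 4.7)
The plan is to establish two facts: (i) the extended functional $\overline{\mathcal{F}}$ is well-defined on $\UU(\Omega)$ and agrees with $\mathcal{F}$ on $\LU(\Omega)$; (ii) $\overline{\mathcal{F}}$ is continuous on $\UU(\Omega)$ with respect to the area-strict convergence. Combined with the area-strict density of $\LU(\Omega)\cap\CC^\infty(\Omega;\mathbb{R}^d)$ in $\UU(\Omega)$ (via standard mollification, which additionally provides $\div u_\varepsilon\to\div u$ strongly in $\LL^2$), these two facts yield the theorem.

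For (i), I would first note that $\tr Eu=\div u\in\LL^2(\Omega)$ forces the trace part of $Eu$ to be absolutely continuous, so $E^s u$ is purely deviatoric and $\dif E^s u/\dif|E^s u|$ takes values in $\SD$ at $|E^s u|$-a.e.\ point. By Theorem~\ref{thm:bd-rank-one}, this polar is moreover a symmetric rank-one matrix $|E^s u|$-a.e. Since $f_{\dev}(x,\cdot)$ is symmetric rank-one convex, so is the positively $1$-homogeneous upper recession $f_{\dev}^{\#}(x,\cdot)$, and Corollary~\ref{cor:kk-corollary} then guarantees that $f_{\dev}^{\#}(x,\cdot)$ is convex at every symmetric rank-one direction in $\SD$. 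This ensures the singular integral in $\overline{\mathcal{F}}$ is meaningful and that the $\limsup$ in~\eqref{eq:hencky-recession} is directionally robust. On $\LU(\Omega)$ one has $E^s u=0$, so $\overline{\mathcal{F}}[u,\Omega]=\mathcal{F}[u,\Omega]$ trivially.

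For (ii), I would reparametrize $f(x,A)=F(x,\tr A,\dev A)$, so that $F$ has quadratic growth in the scalar trace variable and only linear growth in the deviatoric matrix variable, with upper recession in the deviatoric argument equal to $f_{\dev}^{\#}(x,\cdot)$ (the quadratic trace term being negligible under the $\limsup$ in~\eqref{eq:hencky-recession}). Given $u_h\to u$ area-strictly in $\UU(\Omega)$, by definition $\div u_h\to\div u$ strongly in $\LL^2(\Omega)$, and since $\langle Eu_h\rangle(\Omega)\to\langle Eu\rangle(\Omega)$ and $\langle\dev Eu_h\rangle(\Omega)\to\langle\dev Eu\rangle(\Omega)$, the measures $\dev Eu_h$ converge to $\dev Eu$ area-strictly in $\M(\Omega;\SD)$. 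I would then approximate $\div u$ strongly in $\LL^2$ by continuous $w_n\in\CC^0(\overline{\Omega})$, apply a Reshetnyak-type area-strict continuity theorem to the jointly continuous, linear-growth integrand $F(x,w_n(x),\cdot)$, thereby obtaining convergence of the associated Lebesgue-plus-singular functional on $\dev Eu_h$, and finally pass $n\to\infty$.

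The main obstacle will be bridging the two growth regimes within a single continuity argument: the quadratic trace direction allows only strong $\LL^2$ convergence of $\div u_h$, while the linear deviatoric direction is measure-valued and requires Reshetnyak-type arguments on $\dev Eu_h$. The key tool I expect to need is a continuity estimate of the form
\[
|F(x,t_1,D)-F(x,t_2,D)|\leq C\,(1+|t_1|+|t_2|+|D|)\,|t_1-t_2|,
\]
obtainable from the joint continuity of $f$ together with the growth~\eqref{eq:hencky-growth} and the well-known local Lipschitz property of continuous symmetric rank-one convex functions. Pairing this estimate against the uniformly bounded deviatoric total variations $|\dev Eu_h|(\Omega)$ via Cauchy--Schwarz on $\|\div u_h-w_n\|_{\LL^2}$ controls the error introduced by the replacement $\div u\rightsquigarrow w_n$, closing the approximation loop and yielding the desired continuity of $\overline{\mathcal{F}}$.
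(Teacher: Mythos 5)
The paper does not actually prove Theorem~\ref{thm:jesenko-schmidt}; it is quoted from Jesenko--Schmidt~\cite{JesenkoSchmidt17} and used as a black box. There is therefore no in-paper proof to compare your proposal against, and I will instead assess the proposal on its own merits. Your overall plan --- area-strict density of $\LU(\Omega)\cap\CC^\infty$ in $\UU(\Omega)$, plus area-strict continuity of the extended functional --- is the right shape, and your observations in step~(i) (the singular part of $Eu$ is deviatoric, the polar is symmetric rank-one, and $f_{\dev}^{\#}$ is convex there by Corollary~\ref{cor:kk-corollary}) are correct and are genuinely used in the paper's framework.

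The gap is in step~(ii), at precisely the point you identify as the ``key tool''. The Lipschitz-type estimate
\[
|F(x,t_1,D)-F(x,t_2,D)|\leq C\,(1+|t_1|+|t_2|+|D|)\,|t_1-t_2|
\]
genuinely requires the factor $|D|$: a convex function of $t$ which is pinched between $m(t^2+|D|)$ and $M(1+t^2+|D|)$ can have slope of order $(M-m)|D|$ at $t=0$, and symmetric rank-one convexity does not rule this out. Once you carry this $|D|$ into the error term for the replacement $\div u_h\rightsquigarrow w_n$, you are left with
\[
\int_{\Omega}\bigl(1+|\div u_h|+|w_n|+|\dev\mathcal{E}u_h|\bigr)\,|\div u_h-w_n|\,\dif x,
\]
and the product $|\dev\mathcal{E}u_h|\cdot|\div u_h-w_n|$ pairs a sequence that is only bounded in $\LL^1$ with one converging in $\LL^2$. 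Cauchy--Schwarz (which you invoke) would need $\dev\mathcal{E}u_h$ bounded in $\LL^2$, which is false under the assumed anisotropic growth. Splitting or truncating does not close the loop either, because you cannot send $n\to\infty$ uniformly in $h$ without an equi-integrability hypothesis on $|\dev\mathcal{E}u_h|$ that the setting does not give you. So the freeze-the-trace-variable strategy, at least as written, does not yield the uniform approximation estimate it needs.

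Two secondary issues, worth recording but subordinate to the above: the frozen integrand $D\mapsto F(x,w_n(x),D)$ need not lie in the class $\mathrm{D}(\mathbb{R}^{d\times d}_{\mathrm{sym}})$ of Theorem~\ref{thm:reshetnyak} (no strong recession is assumed), so a further pointwise approximation in the spirit of Lemma~\ref{lem:pointwise-approximation}, together with an $x$-dependent version of the Reshetnyak continuity theorem, is required and should not be glossed over; and it is not automatic that the (upper) recession in $D$ of the frozen integrand coincides with $f_{\dev}^{\#}(x,\cdot)$ --- one must argue that the trace contribution is negligible in the $\limsup$, which is again hindered by the $|D|$-factor in the modulus of continuity.

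To repair the argument you would need either an estimate with a $|D|$-independent trace-Lipschitz constant (which would need a structural hypothesis beyond~\eqref{eq:hencky-growth} and symmetric rank-one convexity), or a genuinely two-variable Reshetnyak-type argument that treats the pair $(\div u_h\,\mathcal{L}^d,\ \dev Eu_h)$ jointly rather than freezing the first slot. The latter is closer to what one expects the cited proof to do.
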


\begin{remark}\label{rem:area-strict-continuous-extension}
	For $u\in\UU(\Omega)$ there exists a sequence $(v_h)\subset\LU(\Omega)\cap\CC^\infty(\Omega;\mathbb{R}^d)$ such that $v_h\to u$ area-strictly in $\UU(\Omega)$, see Theorem~14.1.4 in~\cite{Attouch14} (the proof is similar to the proof of Lemma~11.1 in~\cite{Rindler18book}, with the strong $\LL^2$-convergence of $(\div v_h)$ being a consequence of the mollification). In virtue of Theorem~\ref{thm:jesenko-schmidt} we have that
	\begin{align*}
	\int_{\Omega}f(x,\mathcal{E}v_h)\,\dif x
	\to
	\int_{\Omega}f(x,\mathcal{E}u)\,\dif x
	+
	\int_{\Omega}f_{\dev}^{\#}\left(x,\frac{\dif E^su}{\dif|E^su|}\right)\,\dif |E^su|.
	\end{align*}
\end{remark}

\begin{remark}\label{rem:propertiesOfRecessionF}\mbox{}
	\begin{enumerate}
		\item\label{enum:homogeneity} The recession function $f_{\dev}^{\#}(x,\cdot)$ is \emph{positively 1-homogeneous},  i.e., for $\alpha\geq 0$ and $(x,D)\in\Omega\times\SD$ it holds that
		\[
		f_{\dev}^{\#}(x,\alpha D)=\alpha f_{\dev}^{\#}(x,D).
		\]
		\item\label{enum:lipschitz} Since the symmetric rank-one cone $\mathcal{S}$ from Proposition~\ref{prop:span-of-SD} spans $\SD$, the function $f_{\dev}(x,\cdot)$ is globally Lipschitz for every $x\in\Omega$ (this is a consequence of $f_{\dev}(x,\cdot)$ being separately convex with linear growth at infinity and Lemma~5.42 in~\cite{AmbrosioFuscoPallara00}).
		\item\label{enum:sr1c} Since $f_{\dev}(x,\cdot)$ is a symmetric rank-one convex function with linear growth at infinity, the recession function $f_{\dev}^{\#}(x,\cdot)$ is also symmetric rank-one convex and by~\eqref{enum:lipschitz} we can write
		\[
		f_{\dev}^{\#}(x,D)
		=
		\limsup\limits_{s\to\infty}\frac{f_{\dev}(x,sD)}{s}.
		\]
		\item\label{enum:kk-corollary} By Corollary~\ref{cor:kk-corollary} the recession function $f_{\dev}^{\#}$ is convex at points of $\mathcal{S}$.
	\end{enumerate}
\end{remark}

\section{Proof of Theorem~\ref{thm:main-result}}\label{sec:main-result}
Our proof is structured as follows. First, in Lemma~\ref{lem:lsc-linear} we prove that the conclusion of Theorem~\ref{thm:main-result} holds for linear weak* limits. This step is essential for the blow-up argument in the proof of the first part of Proposition~\ref{prop:lower-bound}.

We investigate the relaxation $\mathcal{F}_{*}$ of $\mathcal{F}$ defined in~\eqref{eq:relaxU}. In Proposition~\ref{prop:relaxation-ts} we prove that $\mathcal{F}_{*}$ is lower semicontinuous with respect to the weak* convergence in $\UU(\Omega)$ (see Subsection~\ref{subsubsec:space-T-S} for relevant definitions).

Next, we establish that for all $u\in\UU(\Omega)$ the map $V\mapsto\mathcal{F}_{*}[u,V]$ is a restriction to open sets of a finite Radon measure. We then decompose this measure into the absolutely continuous part $\mathcal{F}_{*}^a$ and the singular part $\mathcal{F}_{*}^s$ (with respect to the Lebesgue measure) and prove the following lower bounds:
\begin{equation}\label{eq:first-bound}
\mathcal{F}_{*}^a[u,B]\geq\int_B f(\mathcal{E}u)\,\dif x
\end{equation}
and
\begin{equation}\label{eq:second-bound}
\mathcal{F}_{*}^s[u,B]\geq\int_B f_{\dev}^{\#}\left(\frac{\dif E^su}{\dif|E^su|}\right)\,\dif |E^su|
\end{equation}
for all Borel sets $B\subset\Omega$. For the proof of the \emph{regular bound}~\eqref{eq:first-bound} we use the classical blow-up sequence argument (cf.~Proposition~5.53 in~\cite{AmbrosioFuscoPallara00}), whereas the proof of the \emph{singular bound}~\eqref{eq:second-bound} relies on the Kirchheim-Kristensen convexity result for positively 1-homogeneous functions~\cite{KirchheimKristensen16}.

Finally, together with the upper bound $\mathcal{F}_{*}\leq\overline{\mathcal{F}}$ from Proposition~\ref{prop:upper-bound} we obtain that $ \mathcal{F}_{*}=\overline{\mathcal{F}} $, thus Theorem~\ref{thm:main-result} follows.

In order to prove Theorem~\ref{thm:main-result} we use cut-off arguments (see Lemmas~\ref{lem:lsc-linear} and~\ref{lem:measure-property}). For a given function $u\in\UU(\Omega)$ and some smooth cut-off function $\varphi\in\CC_c^1(\Omega)$, the product $\varphi u$ is in $\BD(\Omega)$, but not necessarily in $\UU(\Omega)$. Indeed, we have
\[
\div (\varphi u)=\nabla\varphi\cdot u+\varphi\div u
\]
and the first term on the right-hand side does not belong to $\LL^2(\Omega)$ in general.

The following result due to Bogovskii (see~\cite{Bogovskii79,Bogovskii80} or section III.3 in~\cite{Galdi11} for the proof) is essential, since it provides a suitable correction term $v$ such that $\varphi u+v\in\UU(\Omega)$.
\begin{theorem}[Bogovskii]\label{thm:bogovskii}
	Let $\Omega\subset\mathbb{R}^d$ be a bounded Lipschitz domain and $1<q<\infty$. There exists a linear operator $\mathcal{B}:\LL^q(\Omega)\to\WW^{1,q}_0(\Omega;\mathbb{R}^d)$ with the following properties:
	\begin{enumerate}
		\item[(i)] for every $g\in\LL^q(\Omega)$ such that $\int_\Omega g\,\dif x=0$ it holds that
		\[
		\div \mathcal{B}g=g \quad \text{in  } \Omega;
		\]
		\item[(ii)] for every $g\in\LL^q(\Omega)$ the estimate
		\[
		\norm{\nabla(\mathcal{B}g)}_{q}\leq C_q\norm{g}_q
		\]
		holds with a translation- and scaling-invariant constant $C_q>0$, depending only on $\Omega$ and $q$;
		\item[(iii)] if $g\in\CC_c^\infty(\Omega)$, then $\mathcal{B}g\in\CC_c^\infty(\Omega;\mathbb{R}^d)$.
	\end{enumerate}
\end{theorem}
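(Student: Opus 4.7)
The plan is to establish the result first for domains star-shaped with respect to a ball via an explicit integral representation, and then to extend to general bounded Lipschitz domains by a finite covering argument. For the star-shaped case, fix a nonnegative $\omega\in\CC_c^\infty(\Omega)$ supported in a ball $B_0\Subset\Omega$ with $\int\omega=1$. One defines $\mathcal{B}$ by Bogovskii's explicit formula
\[
\mathcal{B}g(x):=\int_\Omega g(y)\,\frac{x-y}{|x-y|^d}\int_{|x-y|}^{\infty}\omega\Bigl(y+r\tfrac{x-y}{|x-y|}\Bigr)r^{d-1}\,\dif r\,\dif y,
\]
whose vector kernel $N(x,y)$ is, by construction, a distributional antiderivative of $\delta_y-\omega$ under $\div_x$.

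A direct computation, using the polar change of variables $z=y+r(x-y)/|x-y|$ and integration by parts, yields $\div\mathcal{B}g(x)=g(x)-\omega(x)\int_\Omega g\,\dif y$, which gives property (i) whenever $g$ has zero mean. Property (iii) follows by inspection: if $g\in\CC_c^\infty(\Omega)$, the $r$-integration truncates to a bounded range and the smoothness and compact support of $\mathcal{B}g$ are inherited from $\omega$ and $g$. For the crucial estimate (ii), differentiate the formula to get
\[
\partial_j\mathcal{B}g(x)=\text{p.v.}\int_\Omega K_j(x,x-y)\,g(y)\,\dif y+R_jg(x),
\]
where $K_j(x,\cdot)$ is homogeneous of degree $-d$ with vanishing spherical mean (this cancellation is exactly what was built in through $\omega$), and $R_j$ is a kernel with an integrable singularity. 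The first term falls under the classical Calderón--Zygmund theorem, giving $\LL^q$-boundedness for $1<q<\infty$, while $R_j$ is handled by Young's inequality. Homogeneity of $K_j$ together with translation-invariance of the construction ensures that the constant $C_q$ depends only on $\Omega$ and $q$, and transforms correctly under scaling.

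To pass to a general bounded Lipschitz domain, one uses the standard fact that such $\Omega$ admits a decomposition $\Omega=\bigcup_{i=1}^N\Omega_i$ into finitely many overlapping subdomains, each star-shaped with respect to a ball. Given $g\in\LL^q(\Omega)$ with $\int_\Omega g=0$, construct iteratively a splitting $g=\sum_{i=1}^N g_i$ with $\supp g_i\subset\Omega_i$ and $\int_{\Omega_i}g_i=0$, controlled by $\|g_i\|_{\LL^q(\Omega_i)}\leq C\|g\|_{\LL^q(\Omega)}$; this is done by subtracting mean-zero correction terms supported in the overlaps. Applying the star-shaped Bogovskii operator $\mathcal{B}_i$ to each $g_i$ (extended by zero) and summing yields a linear operator on the zero-mean subspace, which one then extends linearly (e.g.\ by subtracting a fixed mean-carrying term) to all of $\LL^q(\Omega)$ so that (ii) and (iii) are preserved.

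The main obstacle is the $\LL^q$-bound in (ii): verifying that after differentiation the kernel genuinely satisfies the Calderón--Zygmund size, smoothness, and cancellation conditions, and that the resulting operator norm can be tracked through the decomposition of a Lipschitz domain in a translation- and scale-invariant way. The delicate point is that the constant $C_q$ cannot be allowed to deteriorate under rescaling, since this invariance is what makes the operator usable as a local correction in the cut-off arguments of Lemmas~\ref{lem:lsc-linear} and \ref{lem:measure-property}.
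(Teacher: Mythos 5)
The paper does not prove Theorem~\ref{thm:bogovskii}; it quotes it as a classical result and refers the reader to Bogovskii's original papers and to Section III.3 of Galdi's monograph for the proof. Your sketch reproduces precisely the argument from those references: the explicit integral representation on a domain star-shaped with respect to a ball, the identity $\div\mathcal{B}g = g - \omega\int_\Omega g$ by the polar change of variables, the Calder\'on--Zygmund treatment of the differentiated kernel for the $\LL^q$-bound, and the patching over a finite covering of the Lipschitz domain by star-shaped pieces with a mean-zero splitting of $g$. This is correct and is the standard route, so there is nothing to compare against a different argument in the paper.

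Two small remarks for completeness. First, in the star-shaped step, compact support of $\mathcal{B}g$ in (iii) is not purely by ``inspection'': one needs to observe that the kernel $N(x,y)$ vanishes unless $x$ lies on a ray from $y$ through $\supp\omega$, so $\supp\mathcal{B}g$ is contained in the (compact) union of segments joining $\supp g$ to $\supp\omega$, which is compact in $\Omega$ by star-shapedness; this is worth making explicit. Second, the scale-invariance asserted in (ii) is genuinely what the paper relies on in the cut-off arguments (the operator is applied on the thin strips $S_i$ whose size varies), and the way it enters cleanly is via the homogeneity $\mathcal{B}_{\lambda\Omega}[g(\cdot/\lambda)](\lambda x)=\lambda\,\mathcal{B}_\Omega g(x)$ of the star-shaped formula; once that is noted, the decomposition-and-patching step preserves the invariance because the covering of a rescaled $\Omega$ can be taken to be the rescaled covering. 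You gesture at this but it deserves an explicit line, since it is exactly the property the later lemmas use.
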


We begin with a series of lemmas. The first lemma asserts that the conclusion of Theorem~\ref{thm:main-result} holds for linear limits.
\begin{lemma}\label{lem:lsc-linear}
	Let $A\in\mathbb{R}^{d\times d}_{\mathrm{sym}}$ and let $(u_h)\subset\UU(\Omega)$ be a sequence such that $u_h\starconv Ax$ weakly* in $\UU(\Omega)$. Then
	\begin{equation}\label{eq:lsc-linear}
	|\Omega|f(A)\leq\liminf_{h\to\infty}\int_{\Omega}f(\mathcal{E}u_h)\,\dif x.
	\end{equation}
\end{lemma}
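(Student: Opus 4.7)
The plan is to apply the symmetric-quasiconvexity of $f$ to an admissible test function built from $u_h - A(\cdot)$ via a cut-off combined with a Bogovskii correction. Set $v_h := u_h - A(\cdot)$, so that $v_h \starconv 0$ in $\UU(\Omega)$.

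For fixed small $\delta > 0$, let $\Omega_\delta := \{x\in\Omega : \dist(x,\partial\Omega)>\delta\}$ and take a cut-off $\eta_\delta\in\CC_c^\infty(\Omega)$ with $\eta_\delta\equiv 1$ on $\Omega_{2\delta}$, $\supp\eta_\delta\subset\Omega_\delta$, and $|\nabla\eta_\delta|\leq C/\delta$. The function $\eta_\delta v_h$ has compact support in $\Omega$, but its divergence $\div(\eta_\delta v_h)=\eta_\delta\div v_h+\nabla\eta_\delta\cdot v_h$ must be controlled so as to be compatible with the quadratic growth of $f$ in the trace direction. To this end, applying Bogovskii's operator (Theorem~\ref{thm:bogovskii}) on the Lipschitz annulus $\mathcal{A}_\delta := \Omega_\delta\setminus\overline{\Omega_{2\delta}}$ to the mean-zero part of $\div(\eta_\delta v_h)|_{\mathcal{A}_\delta}$ yields $B_h\in\WW^{1,2}_0(\mathcal{A}_\delta;\mathbb{R}^d)$, which, extended by zero, defines the test function $w_h := \eta_\delta v_h - B_h\in\UU(\Omega)$. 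By construction, $w_h$ coincides with $v_h$ on $\Omega_{2\delta}$, vanishes on $\Omega\setminus\Omega_\delta$, and has constant divergence $c_h^\delta$ on $\mathcal{A}_\delta$, where $c_h^\delta\to 0$ as $h\to\infty$ (for each fixed $\delta$) by $v_h\to 0$ in $\LL^1$ and $\div v_h\rightharpoonup 0$ in $\LL^2$.

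Since $f$ has quadratic growth, symmetric-quasiconvexity extends to $\WW^{1,2}_0$ test functions (remark after Definition~\ref{def:sym-qc}); combined with the area-strict approximation of $w_h$ by functions in $\CC^\infty\cap\LU$ (Remark~\ref{rem:area-strict-continuous-extension}) and the area-strict continuity of Theorem~\ref{thm:jesenko-schmidt}, we obtain
\[
|\Omega|\,f(A) \leq \int_\Omega f(A+\mathcal{E}w_h)\,\dif x + \int_\Omega f_{\dev}^{\#}\!\left(\frac{\dif E^s w_h}{\dif|E^s w_h|}\right)\dif|E^s w_h|.
\]
Splitting $\Omega=\Omega_{2\delta}\cup\mathcal{A}_\delta\cup(\Omega\setminus\Omega_\delta)$: on $\Omega_{2\delta}$ one has $A+\mathcal{E}w_h=\mathcal{E}u_h$ and $E^s w_h=E^s u_h$; the boundary layer contributes $f(A)|\Omega\setminus\Omega_\delta|=O(\delta)$; on the annulus the growth condition (1) yields bounds of the form $M|\mathcal{A}_\delta|(1+(\tr A+c_h^\delta)^2)$ for the trace part, while the deviatoric part is estimated by $|Ev_h|(\mathcal{A}_\delta)+\|v_h\|_{\LL^1}/\delta+|\mathcal{A}_\delta|^{1/2}\|\nabla B_h\|_{\LL^2}$.

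The main obstacle is to dispose, in the limit, of the annular error and of the singular contribution $\int\eta_\delta f_{\dev}^{\#}(\cdots)\,\dif|E^s u_h|$ that appears on the right-hand side. This is handled by a diagonal/slicing argument selecting $\delta=\delta(h)\to 0$ such that $|Ev_h|(\mathcal{A}_{\delta(h)})\to 0$ (by pigeonhole on the uniformly bounded total variations and the shrinking annular measures, together with the fact that for $\mathcal{L}^1$-a.e.\ level the boundary has zero $|Ev_h|$-mass), while simultaneously $c_h^{\delta(h)}$ and $\|v_h\|_{\LL^1}/\delta(h)$ remain small; the persistent singular term is absorbed using that the weak*-limit $A\mathcal{L}^d$ is absolutely continuous, so the singular mass can be separated from the bulk by the diagonal choice of cut-off. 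This yields $|\Omega|f(A)\leq\liminf_{h\to\infty}\int_\Omega f(\mathcal{E}u_h)\,\dif x$ as claimed.
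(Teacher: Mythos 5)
Your overall plan (cut-off, Bogovskii correction, symmetric-quasiconvexity) follows the paper's, but the treatment of the singular contribution is a genuine gap. You claim that the term $\int_\Omega f_{\dev}^{\#}(\cdot)\,\dif|E^s w_h|$ can be ``absorbed using that the weak*-limit $A\mathcal{L}^d$ is absolutely continuous.'' Absolute continuity of the weak* limit does \emph{not} prevent singular mass from persisting. Concretely, take $\Omega=(0,1)^2$, $A=t\,(e_1\odot e_2)$ with $t$ so large that $f(A)>f(0)$ (possible by the coercivity in~\eqref{eq:growth}), set $W:=\tfrac{t}{2}(e_2\otimes e_1-e_1\otimes e_2)$, and let $g_h\in\BV(0,1)$ be non-decreasing staircases with $g_h(s)\to ts$ in $\LL^1(0,1)$. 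Then $u_h(x):=g_h(x_1)\,e_2-Wx$ lies in $\UU(\Omega)$, has $\div u_h=0$, $\mathcal{E}u_h=0$ a.e., $E^s u_h=g_h'(x_1)\,(e_1\odot e_2)$ purely singular, and $u_h\starconv Ax$ weakly* in $\UU(\Omega)$; yet $\liminf_h\int_\Omega f(\mathcal{E}u_h)\,\dif x=f(0)\,|\Omega|<|\Omega|\,f(A)$. So the inequality~\eqref{eq:lsc-linear} genuinely fails for this $\UU$-sequence, and no cut-off manipulation can rescue it. What makes the proof work is the paper's first step: reduce, via Theorem~\ref{thm:jesenko-schmidt} and Remark~\ref{rem:area-strict-continuous-extension}, to $(u_h)\subset\LU(\Omega)\cap\CC^\infty(\Omega;\mathbb{R}^d)$, for which $E^s u_h=0$. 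This is legitimate because the lemma is in fact only invoked for competitors drawn from $\LU(\Omega)$, as in the definition~\eqref{eq:relaxU} of $\mathcal{F}_*$; it is the $\LU$-version that is both true and needed, and your proof must make this reduction rather than try to estimate the singular part.

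Two further issues. The Bogovskii correction can only be taken in $\WW^{1,q}_0(\mathcal{A}_\delta;\mathbb{R}^d)$ for some $1<q<d/(d-1)$, not in $\WW^{1,2}_0$: the data $\nabla\eta_\delta\cdot v_h$ is controlled only in $\LL^q$ through the embedding $\BD(\Omega)\hookrightarrow\LL^{d/(d-1)}(\Omega;\mathbb{R}^d)$, and $d/(d-1)<2$ for $d\geq 3$. Moreover, the single-annulus diagonalization $\delta=\delta(h)\downarrow 0$ must simultaneously achieve $\norm{v_h}_{\LL^1}/\delta(h)\to 0$ and $|Ev_h|(\mathcal{A}_{\delta(h)})\to 0$; these two constraints are not automatically compatible (the first pushes $\delta(h)$ to decay slowly while the second must dodge possible concentrations of $|Ev_h|$ near the boundary), and one would need a further dyadic pigeonhole over candidate scales to make the argument precise. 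The paper circumvents this entirely with three nested limits: fix a boundary layer of size governed by $\varepsilon$, split it into $n$ shells of equal thickness, send $h\to\infty$ first (so the gradient-of-cut-off error terms, which are $O(\norm{u_h-Ax}_{\LL^1})$ with $n,\varepsilon$-dependent constants, vanish), then $n\to\infty$ (the averaged shell error is $O(1/n)$ uniformly in $h$ because the shells are disjoint), then $\varepsilon\downarrow 0$. No per-$h$ choice of a good scale is required.
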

\begin{proof}
	In view of Theorem~\ref{thm:jesenko-schmidt} and Remark~\ref{rem:area-strict-continuous-extension} we can without loss of generality assume that $(u_h)\subset\LU(\Omega)\cap\CC^\infty(\Omega;\mathbb{R}^d)$. The proof is divided into two steps. In the first step we prove~\eqref{eq:lsc-linear} for a sequence $(u_h)$ which has linear boundary values. Then, in the second step we prove, using a cut-off argument, that the assumption of the linear boundary values can be dropped.

	\textit{Step 1.} Suppose that $u_h(x)-Ax$ is compactly supported inside $\Omega$ for all $h\in\mathbb{N}$ and take $\psi_h(x):=u_h(x)-Ax$. Clearly, $\psi_h\in\WW^{1,\infty}_0(\Omega;\mathbb{R}^d)$. Then, by the symmetric-quasiconvexity of $f$ we obtain
	\begin{align*}
	|\Omega|f(A)
	\leq
	\int_{\Omega}f(A+\mathcal{E}\psi_h(y))\,\dif y
	=
	\int_{\Omega}f(\mathcal{E}u_h(y))\,\dif y
	\end{align*}
	for all $h\in\mathbb{N}$. Therefore,
	\[
	|\Omega|f(A)
	\leq
	\liminf_{h\to\infty}\int_{\Omega}f(\mathcal{E}u_h)\,\dif x.
	\]

	\textit{Step 2.} Let $u_h\starconv Ax$ weakly* in $\UU(\Omega)$. Fix $n\in\mathbb{N}$ and $\varepsilon>0$ and choose a Lipschitz subdomain $\Omega_0\Subset\Omega$ such that $|\Omega\setminus\Omega_{0}|\leq\varepsilon$. Let $R:=\dist(\Omega_0,\partial\Omega)$ and for $i=1,\ldots,n$ define the sets
	\[
	\Omega_i
	:=
	\left\{x\in\Omega: \ \dist(x,\Omega_0)<\frac{iR}{n} \right\}.
	\]
	Now, choose cut-off functions $\varphi_i\in\CC_c^1(\Omega;[0,1])$ such that
	\begin{equation}\label{eq:cut-off}
	\mathbbm{1}_{\Omega_{i-1}}\leq\varphi_i\leq\mathbbm{1}_{\Omega_{i}} \quad \text{and} \quad
	\norm{\nabla\varphi_i}_\infty\leq\frac{2n}{R}
	\end{equation}
	and for $x\in\Omega$ define
	\[
	u_{h,i}(x):=Ax+\varphi_i(x)(u_h(x)-Ax).
	\]
	We have
	\begin{equation}\label{eq:Step1-linear}
	\mathcal{E}u_{h,i}=A+\varphi_i(\mathcal{E}u_h-A)+\nabla\varphi_i\odot (u_h-Ax)
	\end{equation}
	and
	\begin{equation}\label{eq:Step1-linear2}
	\div u_{h,i}
	=
	\tr A+\varphi_i(\div u_h-\tr A)+\nabla\varphi_i\cdot (u_h-Ax).
	\end{equation}
	Note that the last term in~\eqref{eq:Step1-linear2} belongs only to $\LL^{d/(d-1)}(\Omega)$ by the embedding $\BD(\Omega)\subset\LL^q(\Omega;\mathbb{R}^d)$ for $1\leq q\leq d/(d-1)$, thus $u_{h,i}\not\in\UU(\Omega)$ for $d>2$. In order to overcome this problem we fix some $1<q<d/(d-1)$ and define numbers
	\[
	\xi_{h,i}
	:=
	\frac{1}{|S_i|}\int_{S_i}\nabla\varphi_i(x)\cdot (u_h(x)-Ax)\,\dif x,
	\]
	where $S_i:=\Omega_i\setminus\overline{\Omega_{i-1}}$ is the open strip between $\Omega_{i-1}$ and $\Omega_i$. Note that $\supp\nabla\varphi_i\subset S_i$. Define
	\begin{equation}\label{eq:Step1-functions}
	f_{h,i}
	:=
	-\nabla\varphi_i\cdot (u_h-Ax)+\xi_{h,i}\in\LL^q(S_i).
	\end{equation}
	By Theorem~\ref{thm:bogovskii} there exist functions $z_{h,i}\in\WW^{1,q}_0(S_i;\mathbb{R}^d)$ such that
	\[
	\div z_{h,i}=f_{h,i} \quad \text{in } S_i
	\]
	and such that the estimate
	\begin{equation}\label{eq:Step2-estimate}
	\norm{\nabla z_{h,i}}_q\leq C_q\norm{f_{h,i}}_q
	\end{equation}
	holds. We also extend the functions $z_{h,i}$ by zero outside $S_i$. Let $w_{h,i}\in\UU(\Omega)$ be defined as
	\[
	w_{h,i}:=u_{h,i}+z_{h,i}.
	\]
	The correction term $z_{h,i}$ ensures that $\div w_{h,i}\in\LL^2(\Omega)$.
	
	Henceforth, for simplicity we write $C>0$ for a generic constant that changes from line to line, possibly depending on $\Omega, M, A, R, n, q$, but never on $h,i$. Note that we have the following estimate:
	\begin{align}\label{eq:Step2-estimate2}
	\norm{f_{h,i}}_q
	\leq
	C\norm{u_h-Ax}_q.
	\end{align}
	This estimate, in conjunction with the Poincar{\'e} inequality,~\eqref{eq:Step2-estimate}, and the compactness of the embedding $\BD(\Omega)\Subset\LL^q(\Omega;\mathbb{R}^d)$, implies that $z_{h,i}\to 0$ in $\WW^{1,q}(\Omega;\mathbb{R}^d)$ as $h\to\infty$. 
	
	Note that the sequence $(w_{h,i})_h$ is bounded in $\UU(\Omega)$ for fixed $i$. Indeed, this is a consequence of the weak* convergence $u_h\starconv Ax$ in $\UU(\Omega)$ and the estimate
	\[
	\xi_{h,i}^2
	\leq C\biggl(\min_{\ell\in\{1,\ldots,n\}}|S_\ell|\biggr)^{-2/q}\norm{u_h-Ax}_q^2 
	\leq C \norm{u_h-Ax}_q^2,
	\]
	since $|S_i|>0$ for all $i\in\{1,\ldots,n\}$.

	Since $w_{h,i}\to Ax$ in $\LL^1(\Omega;\mathbb{R}^d)$ as $h \to \infty$, and $(w_{h,i})_h$ is bounded in $\UU(\Omega)$ for all $i=1,\ldots,n$, by Lemma~\ref{lem:convInU} it follows that $w_{h,i}\starconv Ax$ weakly* in $\UU(\Omega)$. Moreover, $w_{h,i}|_{\partial\Omega}=Ax$ for every $i=1,\ldots,n$ and $h\in\mathbb{N}$.

	By the upper growth bound~\eqref{eq:growth} we obtain
	\begin{align*}
	\int_{\Omega}f(\mathcal{E}w_{h,i})\,\dif x
	&=
	\int_{\overline{\Omega_{i-1}}}f(\mathcal{E}u_h)\,\dif x
	+
	\int_{S_i}f(\mathcal{E}w_{h,i})\,\dif x
	+
	\int_{\Omega\setminus\Omega_i}f(A)\,\dif x \\
	&\leq
	\int_{\Omega}f(\mathcal{E}u_h)\,\dif x
	+
	\int_{S_i}f(\mathcal{E}w_{h,i})\,\dif x
	+
	|\Omega\setminus\Omega_0|f(A) \\
	&\leq
	\int_{\Omega}f(\mathcal{E}u_h)\,\dif x
	+
	M\int_{S_i} |\dev\mathcal{E}w_{h,i}|+|\div w_{h,i}|^2\,\dif x \\
	&\quad+
	C|\Omega\setminus\Omega_0|.
	\end{align*}
	The estimates~\eqref{eq:Step2-estimate} and~\eqref{eq:Step2-estimate2} together with H{\"o}lder's inequality yield
	\begin{align*}
	\int_{S_i}|\dev\mathcal{E}z_{h,i}|\,\dif x
	\leq
	C|\Omega\setminus\Omega_{0}|^{1/q'}\,\sup_h\norm{u_h-Ax}_q,
	\end{align*}
	where $1/q+1/q'=1$.
	Therefore, since $|\Omega\setminus\Omega_{0}|\leq\varepsilon$, we estimate
	\begin{align*}
	\int_{S_i}|\dev \mathcal{E}w_{h,i}|\,\dif x
	&\leq
	|\dev A|\,|S_i|
	+
	\int_{S_i}|\varphi_i|\,|\dev\mathcal{E}u_h-\dev A|\,\dif x \\
	&\quad+
	\int_{S_i}|\dev[\nabla\varphi_i\odot (u_h-Ax)]|\,\dif x
	+
	\int_{S_i}|\dev\mathcal{E}z_{h,i}|\,\dif x \\
	&\leq
	|\dev A|\,|\Omega\setminus\Omega_{0}|+\int_{S_i}|\dev\mathcal{E}u_h-\dev A|\,\dif x \\
	&\quad+
	\int_{S_i}|\dev[\nabla\varphi_i\odot (u_h-Ax)]|\,\dif x
	+
	C|\Omega\setminus\Omega_{0}|^{1/q'} \\
	&\leq
	C(\varepsilon+\varepsilon^{1/q'})+\int_{S_i}|\dev\mathcal{E}u_h-\dev A|\,\dif x \\
	&\quad+
	\frac{4n}{R}\int_{\Omega}|u_h(x)-Ax|\mathbbm{1}_{S_i}(x)\,\dif x \\
	&\leq
	C(\varepsilon+\varepsilon^{1/q'})+\int_{S_i}|\dev\mathcal{E}u_h-\dev A|\,\dif x \\
	&\quad+
	\frac{4n}{R}\sup_h\norm{u_h-Ax}_q\,|S_i|^{1/q'} \\
	&\leq
	C(\varepsilon+\varepsilon^{1/q'})+\int_{S_i}|\dev\mathcal{E}u_h-\dev A|\,\dif x.
	\end{align*}
	Next, we estimate the divergence term:
	\begin{align*}
	&\int_{S_i}|\div w_{h,i}|^2\,\dif x \\
	&\quad\leq
	\int_{S_i}|\tr A+\varphi_i(\div u_h-\tr A)+\xi_{h,i}|^2\,\dif x \\
	&\quad\leq
	3\int_{S_i}|\tr A|^2+|\div u_h-\tr A|^2+\xi_{h,i}^2\,\dif x \\
	&\quad\leq
	3|\tr A|^2|\Omega\setminus\Omega_{0}|+3\int_{S_i}|\div u_h-\tr A|^2\,\dif x
	+
	\frac{12n^2}{R^2|S_i|}\norm{u_h-Ax}_1^2 \\
	&\quad\leq
	C\varepsilon+3\int_{S_i}|\div u_h-\tr A|^2\,\dif x
	+
	\frac{12n^2}{R^2|S_i|}\norm{u_h-Ax}_1^2,
	\end{align*}
	where we used the inequality
	\begin{align*}
	\xi_{h,i}^2
	=
	\frac{1}{|S_i|^2}\left(\int_{S_i}\nabla\varphi_i(x)\cdot (u_h(x)-Ax)\,\dif x\right)^2
	\leq
	\frac{4n^2}{R^2|S_i|^2}\norm{u_h-Ax}_1^2.
	\end{align*}
	Combining the above estimates yields
	\begin{align*}
	\int_{\Omega}f(\mathcal{E}w_{h,i})\,\dif x
	&\leq
	\int_{\Omega}f(\mathcal{E}u_h)\,\dif x
	+
	M\int_{S_i}|\dev\mathcal{E}u_h-\dev A|\,\dif x \\
	&\quad+
	3M\int_{S_i}|\div u_h-\tr A|^2\,\dif x
	+
	C(\varepsilon+\varepsilon^{1/q'}) \\
	&\quad+
	\frac{12Mn^2}{R^2|S_i|}\norm{u_h-Ax}_1^2.
	\end{align*}
	By Step 1 we have
	\begin{align*}
	|\Omega|f(A)
	&\leq
	\liminf_{h\to\infty}\int_{\Omega}f(\mathcal{E}w_{h,i})\,\dif x \\
	&\leq
	\liminf_{h\to\infty}\bigg[\int_{\Omega}f(\mathcal{E}u_h)\,\dif x+M\int_{S_i}|\dev\mathcal{E}u_h-\dev A|\,\dif x \\
	&\qquad\qquad\quad+
	3M\int_{S_i}|\div u_h-\tr A|^2\,\dif x
	+
	\frac{12n^2}{R^2|S_i|}\norm{u_h-Ax}_1^2 \bigg] \\
	&\qquad\qquad\quad+
	C(\varepsilon+\varepsilon^{1/q'}).
	\end{align*}
	Since $u_h\to Ax$ strongly in $\LL^1(\Omega;\mathbb{R}^d)$, the term
	\[
	\frac{12n^2}{R^2|S_i|}\norm{u_h-Ax}_1^2
	\]
	vanishes as $h\to\infty$. Summing up over $i=1,\ldots,n$, dividing by $n$, and using the superadditivity of a~lower limit yields
	\begin{align*}
	|\Omega|f(A)
	&\leq
	\liminf_{h\to\infty}\int_{\Omega}f(\mathcal{E}w_{h,i})\,\dif x \\
	&\leq
	\liminf_{h\to\infty}\int_{\Omega}f(\mathcal{E}u_h)\,\dif x+\frac{M}{n}\sup_h\int_{\Omega}|\dev\mathcal{E}u_h-\dev A|\,\dif x \\
	&\quad+
	\frac{3M}{n}\sup_h\int_{\Omega}|\div u_h-\tr A|^2\,\dif x
	+
	C(\varepsilon+\varepsilon^{1/q'}).
	\end{align*}
	Letting $\varepsilon\downarrow 0$ and $n\to\infty$ yields
	\begin{equation*}
	|\Omega|f(A)
	\leq
	\liminf_{h\to\infty}\int_{\Omega}f(\mathcal{E}u_h)\,\dif x. \qedhere
	\end{equation*}
\end{proof}

\begin{remark}
	Clearly, Lemma~\ref{lem:lsc-linear} also holds for affine limits.
\end{remark}

We are now going to prove that the relaxation
\[
\mathcal{F}_{*}[u,\Omega]:=\inf\left\{\liminf_{h\to\infty} \mathcal{F}[u_h,\Omega]: \ (u_h)\subset\LU(\Omega), \ u_h\starconv u \text{ in } \UU(\Omega)\right\}
\]
satisfies the lower bound
\begin{equation}\label{eq:lower-bound}
\mathcal{F}_{*}[u,\Omega]
\geq
\int_{\Omega}f(\mathcal{E}u)\,\dif x
+
\int_{\Omega}f_{\dev}^{\#}\left(\frac{\dif E^su}{\dif|E^su|}\right)\,\dif|E^su|.
\end{equation}

We first prove that the relaxation is weakly* lower semicontinuous on $ \UU(\Omega) $, for which we need the following lemma~(for a proof see Lemma~11.1.1 in~\cite{Attouch14}).

\begin{lemma}[Diagonalization lemma]\label{lem:diagonalisation}
	Let $(a_{k,l})_{k,l}\subset X$ be a doubly-indexed sequence in a first-countable topological space $X$ such that
	\begin{enumerate}
		\item $\lim_{l\to\infty}a_{k,l}=a_k$,
		\item $\lim_{k\to\infty}a_k=a$.
	\end{enumerate}
	Then, there exists a non-decreasing map $l\mapsto k(l)$ such that
	\[
	\lim_{l\to\infty}a_{k(l),l}=a.
	\]
\end{lemma}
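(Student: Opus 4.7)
The plan is to build the diagonal indexing combinatorially by exploiting first-countability of $X$ at the limit point $a$, which reduces abstract convergence statements to membership in a countable nested family of neighbourhoods. Concretely, using first-countability, fix a basis $(U_n)_{n\in\mathbb{N}}$ of open neighbourhoods of $a$ which is \emph{decreasing}, i.e.\ $U_{n+1}\subset U_n$; this is always achievable by intersecting finitely many elements of any countable basis at $a$. By hypothesis (2), for each $n\in\mathbb{N}$ there exists $K_n\in\mathbb{N}$ such that $a_k\in U_n$ for every $k\geq K_n$, and after replacing $K_n$ by $\max_{m\leq n}K_m$ we may assume that $(K_n)$ is non-decreasing. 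Then apply hypothesis (1) to the \emph{fixed} index $K_n$: since $a_{K_n}\in U_n$ and $U_n$ is open, the convergence $a_{K_n,l}\to a_{K_n}$ yields $L_n$ with $a_{K_n,l}\in U_n$ for all $l\geq L_n$; a routine monotonization lets us further require $L_1<L_2<\cdots$ with $L_n\to\infty$.

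Next, define $k\colon\mathbb{N}\to\mathbb{N}$ piecewise by $k(l):=K_n$ whenever $L_n\leq l<L_{n+1}$, and $k(l):=K_1$ for $l<L_1$. By construction $l\mapsto k(l)$ is non-decreasing. To verify convergence, fix any $n\in\mathbb{N}$ and any $l\geq L_n$: then $l\in[L_m,L_{m+1})$ for some $m\geq n$, so $k(l)=K_m$, and the defining property of $L_m$ gives $a_{k(l),l}=a_{K_m,l}\in U_m$; since $(U_n)$ is decreasing and $m\geq n$, we conclude $a_{k(l),l}\in U_n$. Hence $a_{k(l),l}$ eventually enters every basis neighbourhood $U_n$ of $a$, which, because $(U_n)$ is a basis of neighbourhoods of $a$, means precisely $a_{k(l),l}\to a$.

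The only mildly delicate point is the bookkeeping needed to guarantee \emph{simultaneously} that $l\mapsto k(l)$ is non-decreasing and that the diagonal sequence $a_{k(l),l}$ enters every $U_n$ eventually. Hypothesis (1) provides no uniformity in $k$, which is why one must freeze a specific index $K_n$ at each scale before invoking it, rather than attempting to let $k$ vary freely with $l$ from the outset. First-countability enters only once---to convert convergence in the ambient topology into the countable combinatorial statement that tails lie in a nested neighbourhood basis---so the proof applies verbatim in any metric or first-countable setting.
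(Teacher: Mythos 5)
Your proof is correct. The paper does not supply its own argument for this lemma---it simply cites Lemma~11.1.1 of the Attouch--Buttazzo--Michaille book---and your argument is the standard one found there: fix a decreasing countable neighbourhood basis at $a$ via first-countability, use hypothesis~(2) to freeze a non-decreasing sequence of ``row'' indices $K_n$ whose limits $a_{K_n}$ lie in $U_n$, use hypothesis~(1) at each frozen row to produce strictly increasing thresholds $L_n$ beyond which $a_{K_n,l}\in U_n$, then step the diagonal index $k(l)$ up to $K_n$ on $[L_n,L_{n+1})$. The bookkeeping in your verification step (that $l\geq L_n$ forces $l\in[L_m,L_{m+1})$ for some $m\geq n$, whence $a_{k(l),l}\in U_m\subset U_n$) is precisely what is needed and is carried out correctly. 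You also identify the genuine subtlety---hypothesis~(1) provides no uniformity in $k$, so one must fix $K_n$ before invoking it---which is exactly why the stepwise construction is required.
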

We apply Lemma~\ref{lem:diagonalisation} in Proposition~\ref{prop:relaxation-ts} below with $X=\mathcal{B}$, where $\mathcal{B}\subset\UU(\Omega)$ is a norm-bounded set. This way, $X$ endowed with the weak* topology of $\UU(\Omega)$ is metrizable, thus first-countable.

\begin{proposition}\label{prop:relaxation-ts}
	The relaxation $\mathcal{F}_{*}$ is lower semicontinuous with respect to weak* convergence in $\UU(\Omega)$.
\end{proposition}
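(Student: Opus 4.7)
The plan is the standard diagonal argument for lower semicontinuity of a relaxation, adapted to the weak* topology on $\UU(\Omega)$. Let $(u_k) \subset \UU(\Omega)$ with $u_k \starconv u$ in $\UU(\Omega)$; I aim to show $\mathcal{F}_{*}[u,\Omega] \leq \liminf_{k\to\infty}\mathcal{F}_{*}[u_k,\Omega]$. Without loss of generality, $L:=\liminf_{k\to\infty}\mathcal{F}_{*}[u_k,\Omega] < +\infty$, and after passing to a (not relabelled) subsequence I may assume $\lim_{k\to\infty}\mathcal{F}_{*}[u_k,\Omega] = L$.

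For each $k$, by the definition of $\mathcal{F}_{*}$ as an infimum, I choose a recovery sequence $(u_{k,l})_{l} \subset \LU(\Omega)$ with $u_{k,l} \starconv u_k$ in $\UU(\Omega)$ as $l \to \infty$ and
\[
\limsup_{l\to\infty} \mathcal{F}[u_{k,l},\Omega] \leq \mathcal{F}_{*}[u_k,\Omega] + \tfrac{1}{k}.
\]
The lower bound in~\eqref{eq:growth} gives
\[
\mathcal{F}[u_{k,l},\Omega] \geq m\int_\Omega (\div u_{k,l})^2 \,\dif x + m\int_\Omega |\dev \mathcal{E} u_{k,l}|\,\dif x,
\]
which, since for maps in $\LU(\Omega)$ one has $|\mathcal{E}u_{k,l}| \leq |\dev \mathcal{E}u_{k,l}| + C|\div u_{k,l}|$ and since $\|u_{k,l}\|_{\LL^1}$ is controlled by the $\LL^1$-convergence $u_{k,l} \to u_k$ and the boundedness of $(u_k)$ in $\UU(\Omega)$, implies that, after discarding finitely many indices in $l$ for each $k$, all $u_{k,l}$ lie in a common norm-bounded set $\mathcal{B} \subset \UU(\Omega)$.

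On $\mathcal{B}$ the weak* topology of $\UU(\Omega)$ is metrizable (it is induced by the strong $\LL^1$ topology on $u$, the weak* topology on $Eu \in \M(\Omega;\mathbb{R}^{d\times d}_{\mathrm{sym}})$ restricted to a bounded set, and the weak $\LL^2$ topology on $\div u$ restricted to a bounded set, each of which is metrizable on bounded sets), hence first countable. I may therefore apply Lemma~\ref{lem:diagonalisation} with $X = \mathcal{B}$ endowed with the weak* topology, $a_{k,l}=u_{k,l}$, $a_k = u_k$, and $a=u$. This yields a non-decreasing map $l \mapsto k(l)$ such that $v_l := u_{k(l),l} \starconv u$ in $\UU(\Omega)$.

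By the same diagonal choice (refining $l \mapsto k(l)$ if necessary so that additionally $\mathcal{F}[u_{k(l),l},\Omega] \leq \mathcal{F}_{*}[u_{k(l)},\Omega] + \tfrac{2}{k(l)}$ for all $l$, which is possible since the recovery sequence achieves the $1/k$ bound for all sufficiently large $l$), the sequence $(v_l)\subset\LU(\Omega)$ is admissible in the definition of $\mathcal{F}_{*}[u,\Omega]$, so
\[
\mathcal{F}_{*}[u,\Omega] \leq \liminf_{l\to\infty} \mathcal{F}[v_l,\Omega] \leq \liminf_{l\to\infty} \bigl( \mathcal{F}_{*}[u_{k(l)},\Omega] + \tfrac{2}{k(l)} \bigr) = L,
\]
which is the claim. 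The only subtlety is ensuring that the joint sequence $(u_{k,l})_{k,l}$ lies in a norm-bounded subset of $\UU(\Omega)$ so that the diagonalization lemma (stated for first-countable spaces) is applicable; this is where the quadratic-plus-linear lower bound from~\eqref{eq:growth} is used in an essential way.
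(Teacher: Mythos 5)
Your proof is essentially correct and follows the same strategy as the paper: use the coercivity from~\eqref{eq:growth} to confine the doubly-indexed recovery family to a norm-bounded set $\mathcal{B}\subset\UU(\Omega)$ on which the weak* topology is metrizable, then invoke the diagonalization lemma to extract an admissible competitor for $\mathcal{F}_{*}[u,\Omega]$. The paper phrases it as a contradiction argument and, crucially, applies Lemma~\ref{lem:diagonalisation} to the \emph{pair} $(u_{k,l},\mathcal{F}[u_{k,l},\Omega])_{k,l}$ in $\mathcal{B}\times(\mathbb{R}\cup\{+\infty\})$, which automatically produces a diagonal map along which both the weak* convergence and the energy control hold simultaneously. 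You instead diagonalize only the sequence $(u_{k,l})$ in $\mathcal{B}$ and then assert that the resulting map $l\mapsto k(l)$ can be ``refined'' to also satisfy the energy bound; this is true but is stated loosely — an arbitrary non-decreasing diagonal map need not admit a slowdown preserving the convergence, and the correct justification is that in the construction of the diagonal one is free to choose each threshold $L_n$ as large as desired (in particular larger than the index past which the $1/k$ energy bound holds), or equivalently one bundles the energy into the diagonalization as the paper does. This is a minor point of rigor rather than a gap in the idea; with that sentence tightened, your argument is the paper's.
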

\begin{proof}
	We argue by contradiction. To this end suppose there is a sequence $(u_j)\subset\UU(\Omega)$ such that $u_j\starconv u$ for some $u\in\UU(\Omega)$ and
	\[
		\mathcal{F}_{*}[u,\Omega]
		>
		\liminf_{h\to\infty} \mathcal{F}_{*}[u_j,\Omega].
	\]
	Let $(u_k)_k:=(u_{j_k})_k$ be a subsequence of $ (u_j)_j $ such that
	\[
	\lim_{k\to\infty}\mathcal{F}_{*}[u_k,\Omega]=\liminf_{j\to\infty}\mathcal{F}_{*}[u_j,\Omega].
	\]
	For each $k \in \mathbb{N}$, one can find a sequence $(v_k^{(l)})_l\subset\LU(\Omega)$ such that $v_k^{(l)}\starconv u_k$ and
	\[
	\liminf_{j\to\infty}\mathcal{F}_{*}[u_j,\Omega]
	=
	\lim_{k\to\infty}\mathcal{F}_{*}[u_k,\Omega]
	=
	\lim_{k\to\infty}\lim_{l\to\infty}\mathcal{F}[v_k^{(l)},\Omega].
	\]
	By the lower bound on $\mathcal{F}$ we obtain
	\begin{align*}
	\mathcal{F}_{*}[u,\Omega]
	&>
	\lim_{k\to\infty}\lim_{l\to\infty}\mathcal{F}[v_k^{(l)},\Omega] \\
	&\geq
	\limsup_{k\to\infty}\limsup_{l\to\infty}m\left(\norm{\div v_k^{(l)}}_{\LL^2}^2+\norm{\dev\mathcal{E}v_k^{(l)}}_{\LL^1}\right)
	\end{align*}
	Therefore, the sequence $(v_k^{(l)})$ is uniformly (with respect to both $k$ and $l$) norm-bounded in $\UU(\Omega)$, so we can find a large enough ball $\mathcal{B}\subset\UU(\Omega)$ and apply Lemma~\ref{lem:diagonalisation} to the doubly-indexed sequence
	\[
	(v_k^{(l)},\mathcal{F}[v_k^{(l)},\Omega])_{k,l}\subset \mathcal{B}\times(\mathbb{R}\cup\{+\infty\}).
	\]
	Hence, there exists a sequence $(k_l)_l$ such that $v_{k_l}^{(l)}\starconv u$ as $l\to\infty$ and
	\[
	\lim_{l\to\infty} \mathcal{F}[v_{k_l}^{(l)},\Omega]=\liminf_{j\to\infty}\mathcal{F}_{*}[u_j,\Omega].
	\]
	We have
	\begin{align*}
	\mathcal{F}_{*}[u,\Omega]
	&>
	\lim_{l\to\infty} \mathcal{F}[v_{k_l}^{(l)},\Omega] \\
	&\geq
	\inf\left\{\liminf_{h\to\infty} \mathcal{F}[z_h,\Omega]: \ (z_h)\subset\LU(\Omega), \ z_h\starconv u \text{ in} \UU(\Omega) \right\} \\
	&=
	\mathcal{F}_{*}[u,\Omega],
	\end{align*}
	which is absurd. Therefore, the relaxation $\mathcal{F}_{*}$ is weakly* lower semicontinuous in $\UU(\Omega)$. \qedhere
\end{proof}

\begin{remark}
	Note that the relaxation $\mathcal{F}_{*}$ can be written as
	\[
	\mathcal{F}_{*}[u,\Omega]=\inf\left\{\liminf_{h\to\infty} \mathcal{F}[u_h,\Omega]: \ (u_h)\subset\LU(\Omega), \ u_h\to u \text{ in } \LL^1(\Omega;\mathbb{R}^d)\right\}.
	\]
	Indeed, if this was false, we could find a sequence $ (u_h)\subset\LU(\Omega) $ with $ u_h\to u $ strongly in $\LL^1(\Omega;\mathbb{R}^d)$ such that
	\[
	\mathcal{F}_{*}[u,\Omega]
	>
	\lim_{h\to\infty}\mathcal{F}[u_h,\Omega]
	\geq
	\limsup_{h\to\infty}m\left(\norm{\div u_h}_{\LL^2}^2+\norm{\dev\mathcal{E}u_h}_{\LL^1}\right),
	\]
	where the last inequality follows from the lower bound on the integrand $f$. We see that $(u_h)$ is uniformly norm-bounded in $ \UU(\Omega) $, hence $ u_h\starconv u $ weakly* in $\UU(\Omega)$ by Lemma~\ref{lem:convInU}, whereby we get the contradiction $\mathcal{F}_{*}[u,\Omega]>\mathcal{F}_{*}[u,\Omega]$.
\end{remark}

\begin{remark}\label{rem:invariances}
	The functional $\mathcal{F}_{*}$ satisfies the following properties.
	\begin{enumerate}
		\item[(1)]\label{enum:rigid-invariance} For any rigid deformation $R:\mathbb{R}^d\to\mathbb{R}^d$, i.e., $R(x)=Wx+b $ for $x\in\mathbb{R}^d$, where $ W\in\mathbb{R}^{d\times d}_{\mathrm{skew}} $ is a skew-symmetric matrix and $ b\in\mathbb{R}^d $ is a vector, we have the \textbf{rigid invariance}
		\[
		\mathcal{F}_{*}[u+R,\Omega]
		=
		\mathcal{F}_{*}[u,\Omega].
		\]
		\item[(2)]\label{enum:translation-invariance} For any $x_0\in\mathbb{R}^d$ we have the \textbf{translation invariance}
		\[
		\mathcal{F}_{*}[u(\cdot-x_0),x_0+\Omega]
		=
		\mathcal{F}_{*}[u,\Omega].
		\]
		\item[(3)]\label{enum:blow-up} Let $(R_r)_{r>0}:\mathbb{R}^d\to\mathbb{R}^d$ be a family of rigid deformations. Then, for a \textbf{blow-up} of the form
		\[
		u_r(y)=\frac{u(x_0+ry)-u(x_0)}{r}+R_r(y)
		\]
		where $r>0$ and $y\in(\Omega-x_0)/r$, we have the scaling property
		\[
		\mathcal{F}_{*}\left[u_r,\frac{\Omega-x_0}{r}\right]
		=
		r^{-d}\mathcal{F}_{*}[u,\Omega].
		\]
	\end{enumerate}
\end{remark}

In order to prove the lower bound, we appeal to Lemma~\ref{lem:measure-property} below, which asserts that for a given $u\in\UU(\Omega)$ the map $V\mapsto\mathcal{F}_{*}[u,V]$ is the restriction to the open subsets of $\Omega$ of a Radon measure on $\Omega$, which we still denote by $\mathcal{F}_{*}[u,\cdot]$. Then, we decompose this measure into the absolutely continuous and singular parts with respect to the Lebesgue measure, i.e.
\[
\mathcal{F}_{*}[u,\cdot]=\mathcal{F}_{*}^a[u,\cdot]+\mathcal{F}_{*}^s[u,\cdot], \quad \mathcal{F}_{*}^a[u,\cdot] \ll \mathcal{L}^d\mres\Omega, \quad \mathcal{F}_{*}^s[u,\cdot]\perp\mathcal{L}^d\mres\Omega
\]
and then prove that
\[
\mathcal{F}_{*}^a[u,B]\geq\int_B f(\mathcal{E}u)\,\dif x
\quad\text{and}\quad
\mathcal{F}_{*}^s[u,B]\geq\int_{B}f_{\dev}^{\#}\left(\frac{\dif E^su}{\dif|E^su|}\right)\,\dif|E^su|
\]
for any Borel set $B\subset\Omega$.

\begin{lemma}\label{lem:measure-property}
	For all $u\in\UU(\Omega)$ the set function $V\mapsto\mathcal{F}_{*}[u,V]$ ($V\subset\Omega$ open) is the restriction to the open subsets of $\Omega$ of a finite Radon measure.
\end{lemma}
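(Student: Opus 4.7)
The plan is to apply the De~Giorgi--Letta theorem (Theorem~\ref{thm:degiorgi-letta}) to the set function $\mu_u(V) := \mathcal{F}_*[u, V]$ on open $V \subset \Omega$, together with a finiteness check. Finiteness $\mu_u(\Omega) < \infty$ follows by combining the area-strict smooth approximation of Remark~\ref{rem:area-strict-continuous-extension} with Theorem~\ref{thm:jesenko-schmidt}, which applies since symmetric-quasiconvexity entails symmetric rank-one convexity; the smooth approximants are admissible in the relaxation and their energies converge to $\overline{\mathcal{F}}[u, \Omega] < \infty$. Among the De~Giorgi--Letta conditions, $\mu_u(\emptyset) = 0$, monotonicity, and superadditivity on disjoint opens are immediate from $f \ge 0$: restrict or split admissible sequences.

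The core step is subadditivity $\mu_u(A \cup B) \leq \mu_u(A) + \mu_u(B)$, for which I would follow the template of Step~2 of Lemma~\ref{lem:lsc-linear}: a De~Giorgi slicing combined with the Bogovskii correction of Theorem~\ref{thm:bogovskii}. Fix open $A, B \subset \Omega$ and recovery sequences $(v_h) \subset \LU(A)$, $(w_h) \subset \LU(B)$ weak*-converging to $u$ with $\mathcal{F}[v_h, A] \to \mathcal{F}_*[u, A]$ and $\mathcal{F}[w_h, B] \to \mathcal{F}_*[u, B]$. Choose nested opens $A \setminus B \Subset A_0 \Subset A_1 \Subset A$ so that the transition strip $S := A_1 \setminus \overline{A_0}$ lies in $A \cap B$; partition $S$ into $n$ substrips $S_1, \ldots, S_n$ of comparable thickness and take cut-offs $\varphi_i \in \CC_c^1(A_1; [0,1])$ with $\|\nabla \varphi_i\|_\infty \leq Cn$ that equal $1$ on $A_0 \cup S_1 \cup \cdots \cup S_{i-1}$ and $0$ outside $S_1 \cup \cdots \cup S_i$. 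The naive interpolation $\varphi_i v_h + (1-\varphi_i) w_h$ has a divergence defect $\nabla \varphi_i \cdot (v_h - w_h)$ lying in $\LL^{d/(d-1)}$ but not in $\LL^2$; using the Bogovskii corrector $z_{h,i}$ on $S_i$ (with a mean-zero adjustment $\xi_{h,i}$ exactly as in Lemma~\ref{lem:lsc-linear}), the combination $u_{h,i} := \varphi_i v_h + (1-\varphi_i) w_h + z_{h,i}$ lies in $\LU(A \cup B)$. By~\eqref{eq:growth} and because $u_{h,i} = v_h$ on $A_0 \cup S_1 \cup \cdots \cup S_{i-1}$ and $u_{h,i} = w_h$ on the remainder of $A \cup B$ outside $S_i$,
\[
\mathcal{F}[u_{h,i}, A \cup B] \leq \mathcal{F}[v_h, A] + \mathcal{F}[w_h, B] + M\!\int_{S_i}\!\bigl(1 + |\dev \mathcal{E} u_{h,i}| + |\div u_{h,i}|^2\bigr)\dif x.
\]
The strip error is dominated by $\|v_h - w_h\|_{\LL^q(S_i)}$ for some $1 < q < d/(d-1)$ (vanishing as $h \to \infty$ by the compact embedding $\BD \hookrightarrow \LL^q$) together with restrictions to $S_i$ of the uniformly bounded $|\dev \mathcal{E} v_h|$, $|\dev \mathcal{E} w_h|$, $|\div v_h|^2$, $|\div w_h|^2$. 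Averaging over $i = 1, \ldots, n$, the uniform part drops to $O(1/n)$; selecting an optimal $i_h$ for each $h$ and invoking Lemma~\ref{lem:convInU} produces $u_{h,i_h} \starconv u$ in $\UU(A \cup B)$ with $\mathcal{F}_*[u, A \cup B] \leq \mathcal{F}_*[u, A] + \mathcal{F}_*[u, B] + C/n$, and sending $n \to \infty$ closes subadditivity.

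Inner regularity follows by combining subadditivity with the same upper bound $\mathcal{F}_* \leq \overline{\mathcal{F}}$ used for finiteness: for $W' \Subset W \Subset V$ open, subadditivity applied to $V = W \cup (V \setminus \overline{W'})$ gives
\[
\mu_u(V) \leq \mu_u(W) + \overline{\mathcal{F}}[u, V \setminus \overline{W'}],
\]
and the last term tends to zero as $W' \uparrow V$ by the absolute continuity of $\overline{\mathcal{F}}[u, \cdot]$ with respect to $\mathcal{L}^d + |E^s u|$; the reverse inequality is monotonicity. The main obstacle is the coupling of slicing with the Bogovskii correction: the naive glueing falls outside $\LU$ because $\nabla \varphi_i \cdot (v_h - w_h)$ lies only in $\LL^{d/(d-1)}$, while the quadratic penalty on the divergence amplifies the cut-off gradient by $n^2$, so the corrector must restore $\LL^2$-integrability and the $1/n$-averaging absorb the residual strip energy in a coordinated way.
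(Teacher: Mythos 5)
Your technical machinery (De Giorgi--Letta, Bogovskii correction of the divergence defect $\nabla\varphi_i\cdot(v_h-w_h)$, De Giorgi slicing into $n$ strips, and averaging) is the right one and is the paper's too, but there is a genuine gap in your core subadditivity step. You try to prove $\mathcal{F}_{*}[u,A\cup B]\leq\mathcal{F}_{*}[u,A]+\mathcal{F}_{*}[u,B]$ for arbitrary open $A,B$ directly by gluing recovery sequences. For the cut-offs $\varphi_i$ to have compact support in $A$ you need $A_1\Subset A$, and for the glued map to equal $w_h$ where $\varphi_i=0$ you need $A\setminus B\subset A_0\subset A_1$. Together these force $\overline{A\setminus B}\subset\overline{A_1}\subset A$, which fails in general: take $A=(0,2)$, $B=(1,3)$, so $\overline{A\setminus B}=[0,1]\not\subset A$. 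The nested opens $A\setminus B\Subset A_0\Subset A_1\Subset A$ simply do not exist, so the gluing cannot be performed. Your inner-regularity argument is then also unavailable, since it invokes the full subadditivity you could not establish.

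The fix is a reordering, which is exactly what the paper's proof does. One first proves the weaker inequality $\mathcal{F}_{*}[u,A'\cup B]\leq\mathcal{F}_{*}[u,A'']+\mathcal{F}_{*}[u,B]$ for open $A'\Subset A''$ and arbitrary open $B$; here the compact inclusion $A'\Subset A''$ is a \emph{hypothesis}, so the transition annulus $A''\setminus\overline{A'}$ genuinely sits inside $A''$ and the cut-offs have compact support there, and your Bogovskii-plus-averaging estimate goes through verbatim. From this, inner regularity follows by choosing a compact $K\subset A$ with $\mathcal{F}_{*}[u,A\setminus K]\leq\varepsilon$ (via the upper bound, as you do) and opens $K\subset A'\Subset A''\Subset A$, applying the weak inequality with $B=A\setminus K$. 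Full subadditivity then follows from inner regularity together with the weak inequality, by exhausting $A\cup B$ from inside and placing $\overline{U}\setminus B$ into some $A'\Subset A$. In short, your gluing computation is fine once it is applied only under $A'\Subset A''$; what is missing is the chain weak subadditivity $\Rightarrow$ inner regularity $\Rightarrow$ full subadditivity, rather than the reverse.
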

\begin{proof}
	Fix $u\in\UU(\Omega)$.

	\textit{Step 1.} Let $A', A'', B$ be open subsets of $\Omega$ such that $A'\Subset A''$. We first prove that
	\begin{equation}\label{eq:inequality-measure-property}
	\mathcal{F}_{*}[u,A'\cup B]\leq\mathcal{F}_{*}[u,A'']+\mathcal{F}_{*}[u,B].
	\end{equation}
	Fix $\varepsilon>0$. By the definition of relaxation we can find sequences $(u_h^\varepsilon)\subset\LU(A'')$ and $(v_h^\varepsilon)\subset\LU(B)$ such that $u_h^\varepsilon\starconv u$ weakly* in $\UU(A'')$, $v_h^\varepsilon\starconv u$ weakly* in $\UU(B)$,
	\begin{equation}\label{eq:one}
		\mathcal{F}[u_h^{\varepsilon},A'']\leq\mathcal{F}_{*}[u, A'']+\varepsilon,
	\end{equation}
	and
	\begin{equation}\label{eq:two}
		\mathcal{F}[v_h^{\varepsilon},B]\leq\mathcal{F}_{*}[u, B]+\varepsilon.
	\end{equation}
	Henceforth, we omit the dependence of sequences $u_h$ and $v_h$ on $\varepsilon$. For each $h\in\mathbb{N}$ extend the functions $u_h$ and $v_h$ by zero outside $A''$ and $B$, respectively. Let
	\begin{equation} \label{eq:C_eps}
	C_\varepsilon := \sup_{h \in \mathbb{N}} \, \biggl( \int_{A''}1+|\div u_h|^2+|\mathcal{E}u_h|\,\dif x
	+
	\int_{B}1+|\div v_h|^2+|\mathcal{E}v_h|\,\dif x
	\biggr) < \infty.
	\end{equation}
	Fix $k\in\mathbb{N}$ and an increasing family of open sets
	\[
	A'=A_0\Subset A_1\Subset\ldots\Subset A_k\Subset A''.
	\]
	For each $i=1,\ldots,k$ choose the cut-off function $\varphi_i\in\CC_c^1(A_i;[0,1])$ such that $\varphi_i\equiv 1$ on $A_{i-1}$. Next, define maps $\tilde{w}_{h,i} \in \LL^1(A'\cup B;\mathbb{R}^d)$ via
	\[
	\tilde{w}_{h,i}
	:=
	\varphi_i u_h+(1-\varphi_i)v_h, \quad h\in\mathbb{N}, \quad i=1,\ldots,k.
	\]
	It is clear that $\tilde{w}_{h,i}\in\LU(A_{i-1})$, but $\tilde{w}_{h,i}\not\in\LU(A'\cup B)$, since
	\[
	\div\tilde{w}_{h,i}
	=
	\varphi_i\div u_h+(1-\varphi_i)\div v_h
	+
	\nabla\varphi_i\cdot(u_h-v_h)
	\]
	and the last term on the right-hand side belongs only to $\LL^{d/(d-1)}(A'\cup B)$. To overcome this problem, as before we fix some $1<q<d/(d-1)$ and define
	\[
	\xi_{h,i}
	:=
	\frac{1}{|S_i|}\int_{S_i}\nabla\varphi_i(x)\cdot(u_h(x)-v_h(x))\,\dif x,
	\]
	where $S_i:=A_{i}\setminus\overline{A_{i-1}}$ for $i=1,\ldots,k$. Note that $\supp\nabla\varphi_i\Subset S_i$. By Theorem~\ref{thm:bogovskii} applied in $S_i$ and with the right-hand side
	\[
	f_{h,i}
	:=
	-\nabla\varphi_i\cdot(u_h-v_h)
	+
	\xi_{h,i}\in\LL^q(S_i),
	\]
	there exist functions $z_{h,i}:=\mathcal{B}f_{h,i}\in\WW^{1,q}_0(S_i;\mathbb{R}^d)$ such that
	\[
	\div z_{h,i}=f_{h,i} \quad \text{on } S_i
	\]
	and the estimate
	\begin{equation}\label{eq:estimate}
	\norm{\nabla z_{h,i}}_{q}
	\leq
	C\norm{f_{h,i}}_{q}
	\end{equation}
	holds. We also extend $z_{h,i}$ by zero outside $S_i$.
	Define
	\[
	w_{h,i}:=\tilde{w}_{h,i}+z_{h,i}.
	\]
	The correction term $z_{h,i}$ guarantees that $w_{h,i}\in\LU(A'\cup B)$. Indeed,
	\[
	\div w_{h,i}
	=
	\varphi_i\div u_h+(1-\varphi_i)\div v_h+\xi_{h,i}\mathbbm{1}_{S_i},
	\]
	which clearly belongs to $\LL^2(A'\cup B)$. We have
	\begin{align*}
	\mathcal{F}[w_{h,i},A'\cup B]
	&=
	\int_{A'\cup B}f(\mathcal{E}w_{h,i})\,\dif x \\
	&=
	\int_{(A'\cup B)\cap \overline{A_{i-1}}}f(\mathcal{E}u_h)\,\dif x
	+
	\int_{B\setminus A_i}f(\mathcal{E}v_h)\,\dif x
	+
	\int_{B\cap S_i}f(\mathcal{E}w_{h,i})\,\dif x,
	\end{align*}
	where we used the fact that the corrector $z_{h,i}$ vanishes outside of $S_i$. Hence,
	\begin{align*}
	\mathcal{F}[w_{h,i},A'\cup B]
	&\leq
	\mathcal{F}[u_h,A'']+\mathcal{F}[v_h,B]
	+
	\int_{B\cap S_i}f(\mathcal{E}w_{h,i})\,\dif x.
	\end{align*}
	The last integral can be estimated as follows:
	\begin{alignat*}{3}
	\int_{B\cap S_i}f(\mathcal{E}w_{h,i})\,\dif x
	&\leq
	M\int_{B\cap S_i} && 1+|\div w_{h,i}|^2+|\mathcal{E}w_{h,i}|\,\dif x \\
	&\leq
	3 M\int_{B\cap S_i} && 1+|\div u_h|^2+|\div v_h|^2+\xi_{h,i}^2 \\
	& &&+
	C_k|u_h-v_h|+|\mathcal{E}u_h|+|\mathcal{E}v_h|+|\mathcal{E}z_{h,i}|\,\dif x,
	\end{alignat*}
	where $C_k:=\sup\,\{\norm{\nabla\varphi_i}_\infty: \ 1\leq i\leq k \}$. We have for $1\leq i\leq k$ that
	\begin{align*}
	\xi_{h,i}^2
	&=
	\frac{1}{|S_i|^2}\left(\int_{S_i}\nabla\varphi_i(x)\cdot(u_h(x)-v_h(x))\,\dif x\right)^2 \\
	&\leq
	\frac{C_k^2}{|S_i|^2}\norm{u_h-v_h}_1^2 \\
	&\leq
	C_k^2|S_i|^{-2/q}\norm{u_h-v_h}_q^2.
	\end{align*}
	Here and in all of the following the norms are with respect to the domain $A'\cup B$.
	Since $|S_i|>0$ for all $i\in\{1,\ldots,k\}$, we get
	\[
	\xi_{h,i}^2\leq C_k^2\biggl(\min_{\ell\in\{1,\ldots,k\}}|S_\ell|\biggr)^{-2/q}\norm{u_h-v_h}_q^2 \leq \tilde{C}_{\Omega,q,k} \norm{u_h-v_h}_q^2.
	\]
	By the estimate~\eqref{eq:estimate} and H{\"o}lder's inequality we obtain similarly
	\begin{align*}
	\int_{B\cap S_i}|\mathcal{E}z_{h,i}|\,\dif x
	\leq
	\norm{\nabla z_{h,i}}_q\,|B\cap S_i|^{1/q'}
	\leq
	\tilde{C}_{\Omega,q,k}\norm{u_h-v_h}_q,
	\end{align*}
	where $1/q+1/q'=1$.	Note that for every $h\in\mathbb{N}$ there exists $i_h\in\{1,\ldots,k\}$ such that
	\begin{align*}
	&\int_{B\cap S_{i_h}} 1+|\div u_h|^2+|\div v_h|^2+|\mathcal{E}u_h|+|\mathcal{E}v_h|\,\dif x \\
	&\quad\leq
	\frac{1}{k}\int_{B\cap(\overline{A_k}\setminus A_0)} 1+|\div u_h|^2+|\div v_h|^2+|\mathcal{E}u_h|+|\mathcal{E}v_h|\,\dif x \\
	&\quad\leq
	\frac{C_\varepsilon}{k},
	\end{align*}
	where $C_\varepsilon$ is defined in~\eqref{eq:C_eps}.
	Therefore, combining the above estimates yields
	\[
	\int_{B\cap S_{i_h}}f(\mathcal{E}w_{h,i_h})\,\dif x
	\leq
	C_{\Omega,M,q,k}\bigg(\norm{u_h-v_h}_q^2+\norm{u_h-v_h}_1 + \norm{u_h-v_h}_q\bigg) + \frac{3MC_\varepsilon}{k}.
	\]
	Hence, since $(u_h)$ and $(v_h)$ are chosen such that $\eqref{eq:one}$ and $\eqref{eq:two}$ hold, we have
	\begin{align*}
	\mathcal{F}[w_{h,i_h},A'\cup B]
	&\leq
	\mathcal{F}[u_h,A'']
	+
	\mathcal{F}[v_h,B] \\
	&\quad+
	C_{\Omega,M,q,k}\bigg(\norm{u_h-v_h}_q^2+\norm{u_h-v_h}_1 + \norm{u_h-v_h}_q\bigg) \\
	&\quad + \frac{3MC_\varepsilon}{k} \\
	&\leq
	\mathcal{F}_{*}[u,A'']
	+
	\mathcal{F}_{*}[u,B]
	+2\varepsilon \\
	&\quad+
	C_{\Omega,M,q,k}\bigg(\norm{u_h-v_h}_q^2+\norm{u_h-v_h}_1 + \norm{u_h-v_h}_q\bigg) \\
	&\quad + \frac{3MC_\varepsilon}{k}.
	\end{align*}

	Note that $w_{h,i_h}\to u$ strongly in $\LL^1(A'\cup B;\mathbb{R}^d)$ and $(w_{h,i_h})_h$ is uniformly norm-bounded in $\UU(A'\cup B)$. Lemma~\ref{lem:convInU} thus implies that $(w_{h,i_h})_h$ converges weakly* to $u$ in $\UU(A'\cup B)$. Moreover, $(u_h-v_h)_h$ converges strongly to zero in $\LL^q(A'\cup B;\mathbb{R}^d)$. Therefore we obtain
	\begin{align*}
	\mathcal{F}_{*}[u,A'\cup B]
	&\leq
	\liminf_{h\to\infty}\mathcal{F}[w_{h,i_h},A'\cup B] \\
	&\leq
	\mathcal{F}_{*}[u,A'']+\mathcal{F}_{*}[u,B]+\frac{3MC_\varepsilon}{k} + 2\varepsilon.
	\end{align*}
	Letting $k\to\infty$ followed by $\varepsilon\downarrow 0$ yields the inequality~\eqref{eq:inequality-measure-property}.

	\textit{Step 2.} We now prove that for any open subset $A\subset\Omega$ it holds that
	\begin{equation}\label{eq:inequality-measure-property2}
	\mathcal{F}_{*}[u,A]=\sup\left\{\mathcal{F}_{*}[u,A']: \ A'\Subset A, \ A' \text{ open} \right\}.
	\end{equation}
	In virtue of Remark~\ref{rem:area-strict-continuous-extension} and the growth assumption on the integrand $f$, we obtain the inequality
	\begin{equation}\label{eq:est2}
	\mathcal{F}_{*}[u,A]\leq M\left(\int_A|\div u|^2\,\dif x+\mathcal{L}^d(A)+|Eu|(A)\right).
	\end{equation}

	Therefore, for a fixed $\varepsilon>0$ we can choose a compact set $K\subset A$ such that $\mathcal{F}_{*}[u,A\setminus K]\leq\varepsilon$. Choose open sets $A'$ and $A''$ such that $K\subset A'\Subset A''\Subset A$. By Step~1 with $B=A\setminus K$ we have
	\[
	\mathcal{F}_{*}[u,A]
	\leq
	\mathcal{F}_{*}[u,A'']+\mathcal{F}_{*}[u,A\setminus K]
	\leq
	\mathcal{F}_{*}[u,A'']+\varepsilon
	\]
	Letting $\varepsilon\downarrow 0$ gives~\eqref{eq:inequality-measure-property2}.

	\textit{Step 3.} Let $A, B$ be open subsets of $\Omega$. We now prove that
	\begin{equation}\label{eq:inequality-measure-property3}
	\mathcal{F}_{*}[u,A\cup B]
	\leq
	\mathcal{F}_{*}[u,A]+\mathcal{F}_{*}[u,B].
	\end{equation}
	Fix $\varepsilon>0$. By Step 2 there exists an open set $U\Subset A\cup B$ such that
	\[
	\mathcal{F}_{*}[u,A\cup B]-\varepsilon
	\leq
	\mathcal{F}_{*}[u,U].
	\]
	Choose $A'\Subset A$ open such that $U\subset A'\cup B$. By Step 1 we have
	\[
	\mathcal{F}_{*}[u,A\cup B]-\varepsilon
	\leq
	\mathcal{F}_{*}[u,A'\cup B]
	\leq
	\mathcal{F}_{*}[u,A]+\mathcal{F}_{*}[u,B].
	\]
	Letting $\varepsilon\downarrow 0$ yields~\eqref{eq:inequality-measure-property3}.

	\textit{Step 4.} Finally, we prove that for open sets $A, B$ such that $A\cap B=\emptyset$ the inequality
	\begin{equation}\label{eq:inequality-measure-property4}
	\mathcal{F}_{*}[u,A\cup B]
	\geq
	\mathcal{F}_{*}[u,A]+\mathcal{F}_{*}[u,B]
	\end{equation}
	holds.

	We choose a sequence $(u_h)\subset\LU(A\cup B)$ converging weakly* to $u\in\UU(A\cup B)$ and such that
	\[
	\lim_{h\to\infty}\mathcal{F}[u_h,A\cup B]=\mathcal{F}_{*}[u,A\cup B].
	\]
	Since the sets $A$ and $B$ are disjoint, we have
	\begin{align*}
	\mathcal{F}_{*}[u,A\cup B]
	&=
	\lim_{h\to\infty}\mathcal{F}[u_h,A\cup B] \\
	&\geq
	\liminf_{h\to\infty}\mathcal{F}[u_h,A]
	+
	\liminf_{h\to\infty}\mathcal{F}[u_h,B] \\
	&\geq
	\mathcal{F}_{*}[u,A]+\mathcal{F}_{*}[u,B],
	\end{align*}
	hence we proved~\eqref{eq:inequality-measure-property4}. By Theorem~\ref{thm:degiorgi-letta} we infer that the set function $V\mapsto\mathcal{F}_{*}[u,V]$ is a restriction to open sets of a finite Radon measure. \qedhere
\end{proof}

\begin{proposition}[Upper estimate]\label{prop:upper-bound}
	The relaxation $\mathcal{F}_{*}$ satisfies the upper bound
	\[
	\mathcal{F}_{*}[u,\Omega]
	\leq
	\int_{\Omega}f(\mathcal{E}u)\,\dif x
	+
	\int_{\Omega}f_{\dev}^{\#}\left(\frac{\dif E^su}{\dif|E^su|}\right)\,\dif|E^su|
	.
	\]
\end{proposition}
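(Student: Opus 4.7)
The strategy is to build an explicit recovery sequence for $u$ by exploiting the area-strict density of smooth $\LU$-maps together with the Jesenko--Schmidt continuity result. The key observation is that the upper bound requires neither the blow-up machinery nor the subcritical lower bound (condition (3) of Theorem~\ref{thm:main-result}); it is a direct consequence of already-established tools.

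First, I would verify that Theorem~\ref{thm:jesenko-schmidt} applies to our integrand. Our $f$ is autonomous and satisfies the growth bound~\eqref{eq:growth}, so conditions (\ref{enum:hencky-growth}) and (\ref{enum:hencky-recession}) of that theorem are immediate (the continuity in $x$ is trivial). For condition (\ref{enum:sym-r1-convexity}), the symmetric-quasiconvexity assumed in Theorem~\ref{thm:main-result} implies symmetric rank-one convexity by the one-directional oscillations argument mentioned after Definition~\ref{def:sym-r1c}. Hence the conclusion of Theorem~\ref{thm:jesenko-schmidt} is available.

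Next, by Remark~\ref{rem:area-strict-continuous-extension} there exists a sequence $(v_h) \subset \LU(\Omega) \cap \CC^\infty(\Omega;\mathbb{R}^d)$ with $v_h \to u$ area-strictly in $\UU(\Omega)$, and along this sequence
\[
\lim_{h\to\infty}\int_{\Omega} f(\mathcal{E}v_h)\,\dif x = \int_{\Omega} f(\mathcal{E}u)\,\dif x + \int_{\Omega} f_{\dev}^{\#}\!\left(\frac{\dif E^s u}{\dif|E^s u|}\right)\dif|E^s u|.
\]
Since area-strict convergence in $\UU(\Omega)$ implies strict convergence, which in turn implies weak* convergence (the sequence $(v_h)$ is uniformly norm-bounded in $\UU(\Omega)$, so Lemma~\ref{lem:convInU} applies), the sequence $(v_h)$ is admissible in the infimum defining $\mathcal{F}_{*}[u,\Omega]$ in~\eqref{eq:relaxU}.

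Consequently,
\[
\mathcal{F}_{*}[u,\Omega] \leq \liminf_{h\to\infty} \mathcal{F}[v_h,\Omega] = \int_{\Omega} f(\mathcal{E}u)\,\dif x + \int_{\Omega} f_{\dev}^{\#}\!\left(\frac{\dif E^s u}{\dif|E^s u|}\right)\dif|E^s u|,
\]
which is the desired estimate. There is no genuine obstacle here: the entire argument is a bookkeeping exercise once one notes that quasiconvexity yields rank-one convexity and that area-strict smooth approximations exist in $\UU(\Omega)$. The nontrivial content of Theorem~\ref{thm:main-result} is entirely in the matching lower bound, for which the subcritical condition (3) and the Kirchheim--Kristensen convexity result are indispensable.
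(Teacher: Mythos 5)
Your proposal is correct and follows essentially the same route as the paper: invoke the area-strict density of smooth $\LU$-maps in $\UU(\Omega)$ (Remark~\ref{rem:area-strict-continuous-extension}), use the continuity extension of Theorem~\ref{thm:jesenko-schmidt} along that sequence, and note that area-strict convergence is stronger than weak* convergence so the sequence is admissible in the infimum defining $\mathcal{F}_*$. The extra paragraph verifying the hypotheses of Theorem~\ref{thm:jesenko-schmidt} is a helpful bookkeeping detail that the paper leaves implicit.
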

\begin{proof}
	By Remark~\ref{rem:area-strict-continuous-extension} we can find a sequence $(u_h)\subset\LU(\Omega)\cap\CC^\infty(\Omega;\mathbb{R}^d)$ converging area-strictly to $u\in\UU(\Omega)$. Since the area-strict convergence is stronger than the weak* convergence, by the definition of $\mathcal{F}_{*}$, it follows that
	\[
	\mathcal{F}_{*}[u,\Omega]
	\leq
	\liminf_{h\to\infty}\mathcal{F}[u_h,\Omega]
	=
	\int_{\Omega}f(\mathcal{E}u)\,\dif x
	+
	\int_{\Omega}f_{\dev}^{\#}\left(\frac{\dif E^su}{\dif|E^su|}\right)\,\dif|E^su|,
	\]
	where the equality follows from Remark~\ref{rem:area-strict-continuous-extension}.
\end{proof}

The conclusion of Theorem~\ref{thm:main-result} will follow once we prove the lower bound.

\begin{proposition}[Lower estimate]\label{prop:lower-bound}
	For $u\in\UU(\Omega)$ the inequality
	\[
	\mathcal{F}_{*}[u,\Omega]
	\geq
	\int_{\Omega}f(\mathcal{E}u)\,\dif x
	+
	\int_{\Omega}f_{\dev}^{\#}\left(\frac{\dif E^su}{\dif|E^su|}\right)\,\dif|E^su|
	\]
	holds.
\end{proposition}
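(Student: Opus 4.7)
\textit{Proof proposal.} By Lemma~\ref{lem:measure-property} the set function $V \mapsto \mathcal{F}_{*}[u,V]$ extends to a finite Radon measure on $\Omega$; write its Lebesgue decomposition with respect to $\mathcal{L}^d\mres\Omega$ as $\mathcal{F}_{*}[u,\cdot] = \mathcal{F}_{*}^a[u,\cdot] + \mathcal{F}_{*}^s[u,\cdot]$. By Besicovitch's theorem (Theorem~\ref{thm:besicovitch}), the full lower estimate will follow once we establish the two pointwise density bounds
\[
\frac{\dif\mathcal{F}_{*}^a[u,\cdot]}{\dif\mathcal{L}^d}(x_0)\geq f(\mathcal{E}u(x_0)) \quad \text{for } \mathcal{L}^d\text{-a.e.\ } x_0\in\Omega,
\]
and
\[
\frac{\dif\mathcal{F}_{*}^s[u,\cdot]}{\dif|E^s u|}(x_0)\geq f_{\dev}^{\#}\Bigl(\frac{\dif E^s u}{\dif|E^s u|}(x_0)\Bigr) \quad \text{for } |E^s u|\text{-a.e.\ } x_0\in\Omega.
\]

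\textbf{Regular bound.} At any $x_0$ that is a Lebesgue point of $\mathcal{E}u$ (as an $\LL^1$-function) and of $\div u$ (as an $\LL^2$-function), introduce the blow-ups
\[
v_r(y):=\frac{u(x_0+ry)-(u)_{x_0,r}}{r}, \qquad y\in B(0,1),
\]
where $(u)_{x_0,r}$ denotes the average of $u$ on $B(x_0,r)$. A Poincar\'e-Korn estimate in $\BD$ yields weak$^*$ convergence in $\UU(B(0,1))$ of $(v_r)$ (up to skew-affine corrections) to the affine map $y\mapsto \mathcal{E}u(x_0)y + w(y)$, with $w$ an infinitesimal rigid motion. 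The scaling property from Remark~\ref{rem:invariances} combined with Besicovitch differentiation then gives
\[
\frac{\dif\mathcal{F}_{*}^a[u,\cdot]}{\dif\mathcal{L}^d}(x_0)=\lim_{r\downarrow 0}\frac{\mathcal{F}_{*}[u,B(x_0,r)]}{\omega_d r^d}=\liminf_{r\downarrow 0}\omega_d^{-1}\mathcal{F}_{*}[v_r,B(0,1)],
\]
and Lemma~\ref{lem:lsc-linear} (applied to the affine limit) bounds the right-hand side from below by $f(\mathcal{E}u(x_0))$, as required.

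\textbf{Singular bound.} Fix $x_0$ in the set of full $|E^s u|$-measure where Theorem~\ref{thm:bd-rank-one} applies: write $P_0:=\tfrac{\dif E^s u}{\dif|E^s u|}(x_0)=a(x_0)\odot b(x_0)\in\mathcal{S}$. Since $\div u\in\LL^2(\Omega)$, the singular part $E^s u$ is trace-free, so $P_0\in\SD\cap\mathcal{S}$. Corollary~\ref{cor:kk-corollary} supplies a linear functional $\ell\colon\SD\to\mathbb{R}$ with $\ell\leq f_{\dev}^{\#}$ on $\SD$ and $\ell(P_0)=f_{\dev}^{\#}(P_0)$. For any recovery sequence $(u_h)\subset\LU(\Omega)$ realising $\mathcal{F}_{*}[u,\Omega]$ and any $\varphi\in\CC_c(\Omega;[0,1])$, the subcritical condition~\eqref{eq:subcritical-growth} gives
\[
\int_\Omega \varphi f(\mathcal{E}u_h)\,\dif x \geq \int_\Omega \varphi\,\ell(\dev\mathcal{E}u_h)\,\dif x - M\int_\Omega \varphi\bigl(|\div u_h|^\gamma+|\dev\mathcal{E}u_h|^\delta+1\bigr)\,\dif x.
\]
Linearity of $\ell$ together with $\dev E u_h \starconv \dev Eu$ lets us pass to the limit in the first term. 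Since $\gamma<2$ and $(\div u_h)$ is bounded in $\LL^2$, the family $(|\div u_h|^\gamma)$ is bounded in $\LL^{2/\gamma}$ and hence equiintegrable; since $\delta<1$ and $(\dev\mathcal{E}u_h)$ is bounded in $\LL^1$, interpolation bounds $(|\dev\mathcal{E}u_h|^\delta)$ in $\LL^{1/\delta}$ with $1/\delta>1$, again yielding equiintegrability. Consequently the weak-$*$ limits of these error terms are absolutely continuous with respect to $\mathcal{L}^d$, so on localisation to smaller and smaller balls around $x_0$ and taking the Radon--Nikodym density with respect to $|E^s u|$, only the contribution of $\ell$ survives, giving $\ell(P_0)=f_{\dev}^{\#}(P_0)$.

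\textbf{Main obstacle.} The regular bound is essentially a reduction to Lemma~\ref{lem:lsc-linear}, and Besicovitch does the bookkeeping. The real difficulty lies in the singular bound: the integrand $f$ is neither convex nor 1-homogeneous in the trace direction, and the natural candidate $f_{\dev}^{\#}$ involves only the deviatoric projection. The Kirchheim--Kristensen theorem gives a \emph{linear} tangent to $f_{\dev}^{\#}$ at the rank-one point $P_0$, which is what makes passage to the weak-$*$ limit possible, but only after absorbing the cross-terms produced by splitting $\mathcal{E}u_h$ into trace and deviatoric parts. Assumption~\eqref{enum:lower-bound} of Theorem~\ref{thm:main-result} is exactly calibrated to make those cross-terms equiintegrable and hence harmless on the $|E^s u|$-singular set; this is the technical heart of the argument and the reason the subcritical exponents $\gamma<2$ and $\delta<1$ cannot be relaxed within this strategy.
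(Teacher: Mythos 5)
Your proposal follows essentially the same strategy as the paper: decompose the set-function $V\mapsto\mathcal{F}_{*}[u,V]$ into regular and singular parts via Lemma~\ref{lem:measure-property}, handle the regular density by blow-up and Lemma~\ref{lem:lsc-linear}, and handle the singular density using the Kirchheim--Kristensen linear tangent $\ell$ at $a\odot b$ together with the subcritical bound~\eqref{eq:subcritical-growth} to show the error terms are $\mathcal{L}^d$-absolutely continuous. Both arguments are built on exactly the same two pillars, and the identification of $\gamma<2$, $\delta<1$ as the condition making the cross-terms equiintegrable is correct.

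There is one organizational difference worth noting. The paper localizes \emph{first}: for each $r>0$ it chooses, via a diagonal argument, a recovery sequence for $\mathcal{F}_{*}[u,B(x_0,r)]$ on the ball itself, splits the integral into $I^{(1)}_{h,r}$ and $I^{(2)}_{h,r}$, and estimates each before dividing by $|Eu|(B(x_0,r))$. You instead take a single recovery sequence on all of $\Omega$, test against cutoffs $\varphi\in\CC_c(\Omega)$, and then take Radon--Nikodym densities at the end. Your route is viable, but it quietly relies on the fact that along a recovery sequence the measures $f(\mathcal{E}u_h)\,\mathcal{L}^d$ converge weakly* to the measure $\mathcal{F}_{*}[u,\cdot]$ itself, not merely to some measure dominating it. This needs a short argument: by weak* lower semicontinuity of mass and the inner regularity furnished by De Giorgi--Letta one gets $\mu\geq\mathcal{F}_{*}[u,\cdot]$ on open sets, while $\mu(\Omega)\leq\lim_h\mathcal{F}[u_h,\Omega]=\mathcal{F}_{*}[u,\Omega]$ forces equality. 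You should state this step explicitly; without it the inequality $\int\varphi\,\dif\mathcal{F}_{*}[u,\cdot]=\lim_h\int\varphi\,f(\mathcal{E}u_h)\,\dif x$ is unjustified, and that identity is exactly what makes the cutoff-localization route close. With that small addition, the proposal is correct and essentially equivalent to the paper's proof.
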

\begin{proof}
	We treat separately $\mathcal{L}^d$-a.e.\ regular point $x_0\in\Omega$ and $|E^su|$-a.e.\ singular point $x_0\in\Omega$.

	\textit{Regular points.} The proof is based on a blow-up argument. Fix $x_0\in\Omega$ such that
	\begin{enumerate}
		\item $u$ is approximately differentiable at $x_0$,
		\item $\displaystyle\lim_{r\downarrow 0}\frac{|Eu|(B(x_0,r))}{\omega_dr^d}=\frac{\dif|Eu|}{\dif\mathcal{L}^d}(x_0)=|\mathcal{E}u(x_0)|$,
		\item $x_0$ is an $\mathcal{L}^d$-Lebesgue point of $\div u$.
	\end{enumerate}
	Since $u\in\UU(\Omega)$, these properties hold for $\mathcal{L}^d$-almost every $x\in\Omega$. In particular (1) is a consequence of Theorem~7.4 in~\cite{AmbrosioCosciaDalMaso97}, whereas (2) follows from Theorem~\ref{thm:besicovitch}. For $y\in B(0,1)$ define maps
	\[
	u_r(y):=\frac{u(x_0+ry)-\tilde{u}(x_0)}{r}, \quad 0<r<\dist(x_0,\partial\Omega),
	\]
	where $\tilde{u}$ is the precise representative of $u$. For $u_0(y):=\nabla u(x_0)y$ we have the strong convergence $u_r\to u_0$ in $\LL^1(B(0,1);\mathbb{R}^d)$. Indeed, by the approximate differentiability we have
	\begin{multline*}
	\int_{B(0,1)}|u_r(y)-u_0(y)|\,\dif y
	=
	\frac{1}{r^d}\int_{B(x_0,r)}\left| \frac{u(z)-\tilde{u}(x_0)-\nabla u(x_0)(z-x_0)}{r} \right|\,\dif z\to 0
	\end{multline*}
	as $r\downarrow 0$. Moreover, we have strict convergence:
	\begin{align*}
	\lim_{r\downarrow 0}|Eu_r|(B(0,1))
	&=
	\omega_d\lim_{r\downarrow 0}\frac{|Eu|(B(x_0,r))}{\omega_dr^d} \\
	&=
	\omega_d|\mathcal{E}u(x_0)| \\
	&=
	|Eu_0|(B(0,1)),
	\end{align*}
	thus $(u_r)$ is bounded in $\BD(B(0,1))$. Note also that for $\varphi\in\LL^2(B(0,1))$ we have
	\begin{align*}
	&\left|\int_{B(0,1)}\varphi(y)(\div u_r(y)-\div u_0(y))\,\dif y\right| \\
	&\qquad\leq
	\norm{\varphi}_2\int_{B(0,1)}|\div u(x_0+ry)-\div u(x_0)|^2\,\dif y \\
	&\qquad=
	\omega_d\norm{\varphi}_2\mint{-}_{B(x_0,r)}|\div u(z)-\div u(x_0)|^2\,\dif z.
	\end{align*}
	The right-hand side vanishes as $r\downarrow 0$ by the Lebesgue point property (3). Hence, $u_r\starconv u_0$ weakly* in $\UU(B(0,1))$. In virtue of Lemma~\ref{lem:lsc-linear} and Proposition~\ref{prop:relaxation-ts} and the scaling properties of $\mathcal{F}_{*}$ we obtain
	\begin{align*}
	\liminf_{r\downarrow 0}\frac{\mathcal{F}_{*}[u,B(x_0,r)]}{r^d}
	&=
	\liminf_{r\downarrow 0}\mathcal{F}_{*}[u_r,B(0,1)] \\
	&\geq
	\mathcal{F}_{*}[u_0,B(0,1)]\\
	&\geq \int_{B(0,1)}f(\mathcal{E}u_0(y))\,\dif y \\
	&=
	\omega_d f(\mathcal{E}u(x_0)).
	\end{align*}
	Therefore, by Lemma~\ref{lem:measure-property} and Proposition~\ref{prop:lowerHausdorffBound} we obtain
	\[
	\mathcal{F}_{*}^a[u,B]\geq\int_B f(\mathcal{E}u)\,\dif x
	\]
	for any Borel set $B\subset\Omega$.

	\textit{Singular points.} We want to prove that for all Borel sets $B\subset\Omega$ the inequality
	\[
	\mathcal{F}_{*}^s[u,B]\geq\int_B f_{\dev}^{\#}\left(\frac{\dif E^su}{\dif|E^su|}\right)\,\dif|E^su|
	\]
	holds. We fix $x_0\in\Omega$ such that
	\[
	\frac{\dif E^su}{\dif|E^su|}(x_0)=a\odot b, \qquad a,b\in\mathbb{R}^d\setminus\{0\}, \ a\perp b.
	\]
	This property holds for $|E^su|$-a.e.\ $x_0\in\Omega$ by Theorem~\ref{thm:bd-rank-one}. It suffices to establish the inequality
	\[
	\lim_{r\downarrow 0}\frac{\mathcal{F}_{*}[u,B(x_0,r)]}{|Eu|(B(x_0,r))}\geq f_{\dev}^{\#}(a\odot b)
	\]
	at any point $x_0\in\Omega$ for which the limit on the left-hand side exists, which is the case at $|Eu|$-almost every $x_0$ (cf.~Corollary~2.23 in \cite{AmbrosioFuscoPallara00} with $\mu=|Eu|$). By the coercivity of $\mathcal{F}$ and a diagonal argument similar to the one contained in the proof of Proposition~\ref{prop:relaxation-ts}, we can choose a sequence  $(u_h)\subset\LU(B(x_0,r))$ such that $ u_h\starconv u $ weakly* in $ \UU(B(x_0,r))$ and
	\[
	\lim_{h\to\infty}\mathcal{F}[u_h,B(x_0,r)]=\mathcal{F}_{*}[u,B(x_0,r)].
	\]
	We then have
	\begin{align*}
	&\mathcal{F}_{*}[u,B(x_0,r)] \\
	&\quad=
	\lim_{h\to\infty}\mathcal{F}[u_h,B(x_0,r)] \\
	&\quad=
	\lim_{h\to\infty}\int_{B(x_0,r)}f(\mathcal{E}u_h)\,\dif x \\
	&\quad=
	\lim_{h\to\infty}\int_{B(x_0,r)}f(\mathcal{E}u_h)-f_{\dev}^{\#}(\dev\mathcal{E}u_h)\,\dif x+\int_{B(x_0,r)}f_{\dev}^{\#}(\dev\mathcal{E}u_h)\,\dif x \\
	&\quad=:
	\lim_{h\to\infty}\left(I^{(1)}_{h,r}+I^{(2)}_{h,r}\right).
	\end{align*}
	In virtue of~\eqref{eq:subcritical-growth} we have
	\begin{align*}
	I^{(1)}_{h,r}
	&=
	\int_{B(x_0,r)}f(\mathcal{E}u_h)-f_{\dev}^{\#}(\dev\mathcal{E}u_h)\,\dif x \\
	&\geq
	-M\int_{B(x_0,r)}1+|\div u_h|^\gamma+|\dev\mathcal{E}u_h|^\delta \,\dif x
	\end{align*}
	for $\gamma\in[0,2)$ and $\delta\in[0,1)$.
	We can assume that
	\[
	|\dev\mathcal{E}u_h|^\delta \rightharpoonup \xi \quad\text{weakly in } \LL^{1/\delta}(B(x_0,r))
	\]
	for some $\xi\in\LL^{1/\delta}(B(x_0,r))$.

	For $0\leq\gamma<2$ by H{\"o}lder's inequality we obtain
	\[
	\int_{B(x_0,r)}|\div u_h|^\gamma\,\dif x
	\leq
	\sup_h\,\norm{\div u_h}_2^{\gamma}\,|B(x_0,r)|^{1-\gamma/2}.
	\]
	Thus,
	\begin{align*}
	\lim_{h\to\infty} I^{(1)}_{h,r}
	\geq
	- C_{M,\gamma}\left(|B(x_0,r)|^{1-\gamma/2}+|B(x_0,r)|+\int_{B(x_0,r)} \xi \,\dif x\right).
	\end{align*}
	
	Therefore
	\[
	\lim_{r\downarrow 0} \lim_{h\to\infty} \frac{I^{(1)}_{h,r}}{|Eu|(B(x_0,r))}
	\geq 0.
	\]

	By Proposition~\ref{prop:span-of-SD} the set
	\[
	\mathcal{S}:=\left\{a\odot b: \ a,b\in\mathbb{R}^d, \ a\cdot b=0 \right\}
	\]
	spans the space of symmetric and deviatoric matrices $\SD$. Moreover, the recession function $f_{\dev}^{\#}$ is positively 1-homogeneous and convex at points of $\mathcal{S}$ (see Remark~\ref{rem:propertiesOfRecessionF}). In virtue of Theorem~\ref{thm:kk-convexity} for each orthogonal $a,b\in\mathbb{R}^d$ there exists a linear function $\ell:\SD\to\mathbb{R}$ such that $f_{\dev}^{\#}(D)\geq \ell(D)$ for all $D\in\SD$ and $f_{\dev}^{\#}(a\odot b)=\ell(a\odot b)$. For all but finitely many $r>0$ we can assume that $\Lambda(\partial B(x,r))=0$, where $\Lambda\in\Mpos(\Omega)$ is the weak* limit of (a subsequence of) the measures $|\ell(\dev(\mathcal{E}u_h))|\mathcal{L}^d$.
	Therefore, we have
	\begin{align*}
	\lim_{h\to\infty}I^{(2)}_{h,r}
	&=
	\lim_{h\to\infty}\int_{B(x_0,r)}f_{\dev}^{\#}(\dev\mathcal{E}u_h)\,\dif x \\
	&\geq
	\limsup_{h\to\infty}\int_{B(x_0,r)}\ell(\dev\mathcal{E}u_h)\,\dif x \\
	&=
	\ell(\dev Eu(B(x_0,r))),
	\end{align*}
	where the last equality follows from the linearity of $\ell$ and Proposition~1.62(b) in~\cite{AmbrosioFuscoPallara00}. Combining the above estimates yields
	\begin{align*}
	\lim_{r\downarrow 0}\frac{\mathcal{F}_{*}[u,B(x_0,r)]}{|Eu|(B(x_0,r))}
	&\geq
	\limsup_{r\downarrow 0}\frac{\ell(\dev Eu(B(x_0,r)))}{|Eu|(B(x_0,r))} \\
	&=
	\limsup_{r\downarrow 0}\ell\left(\dev\left(\frac{ Eu(B(x_0,r))}{|Eu|(B(x_0,r))}\right)\right) \\
	&=
	\ell\left(\dev\left(\lim_{r\downarrow 0}\frac{Eu(B(x_0,r))}{|Eu|(B(x_0,r))}\right)\right) \\
	&=
	\ell\left(\dev(a\odot b)\right) \\
	&=
	\ell(a\odot b) \\
	&=
	f_{\dev}^{\#}(a\odot b).
	\end{align*}
	This finishes the proof.\qedhere
\end{proof}

\section{Relaxation in \texorpdfstring{$\BD$}{BD}}\label{sec:relaxation-bd}
In this section we prove Theorem~\ref{thm:lsc-bd}. The strategy of the proof is effectively the same as the one used for Theorem~\ref{thm:main-result}, except that we may prove the lower bound at singular points without using the Kirchheim--Kristensen Theorem~\ref{thm:kk-convexity}. In fact, the $\BD$ counterparts of our auxiliary results are substantially easier to establish than in the mixed-growth case, so we omit their proofs.

In all of the following we assume that $f$ is already symmetric-quasiconvex. This is no restriction since an inspection of the proof of the main result in~\cite{BarrosoFonsecaToader00}, Theorem 3.5, yields that the relaxation of the functional
\[
\int_\Omega f(\mathcal{E}u) \,\dif x
\]
for all $u \in \LD(\Omega)$ is given by
\[
\int_\Omega (SQf)(\mathcal{E}u) \,\dif x,
\]
without any restriction on the recession function (the condition~(3.2) in~\cite{BarrosoFonsecaToader00} is only used for the jump part).

We have the following analogue of Lemma~\ref{lem:lsc-linear} (note that there is a $\BD(\Omega)$-analogue of Lemma~\ref{lem:convInU}, see the remark after that lemma).

\begin{lemma}\label{lem:linear-bd}
	Let $A\in\mathbb{R}^{d\times d}_{\mathrm{sym}}$ and let $(u_h)\subset\BD(\Omega)$ be a sequence such that $u_h\starconv Ax$ weakly* in $\BD(\Omega)$. Then
	\begin{equation}\label{eq:linear-bd}
	|\Omega|f(A)\leq\liminf_{h\to\infty}\int_{\Omega}f(\mathcal{E}u_h)\,\dif x.
	\end{equation}
\end{lemma}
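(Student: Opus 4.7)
The plan is to adapt the two-step proof of Lemma~\ref{lem:lsc-linear}, exploiting the fact that $\BD(\Omega)$ carries no divergence constraint, so no Bogovskii corrector is required. As a preliminary step I would mollify each $u_h$ and apply a diagonal argument (cf.\ Lemma~\ref{lem:diagonalisation}) to reduce to the case $(u_h)\subset\LD(\Omega)\cap\CC^\infty(\Omega;\mathbb{R}^d)$, which is where the natural bounded-strain sequences of interest (for instance those furnished by the relaxation~\eqref{eq:relaxBD}) already live.

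The first step treats the case of linear boundary values: if additionally $\psi_h:=u_h-Ax\in\WW^{1,\infty}_0(\Omega;\mathbb{R}^d)$, then the symmetric-quasiconvexity of $f$ directly gives
\[
|\Omega|f(A)\leq\int_\Omega f(A+\mathcal{E}\psi_h(y))\,\dif y=\int_\Omega f(\mathcal{E}u_h(y))\,\dif y,
\]
and taking $\liminf_{h\to\infty}$ yields~\eqref{eq:linear-bd}.

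The second step reduces the general case to the first via the same cut-off construction as in Step~2 of Lemma~\ref{lem:lsc-linear}, with the Bogovskii terms simply omitted. Fix $n\in\mathbb{N}$ and $\varepsilon>0$, pick a Lipschitz subdomain $\Omega_0\Subset\Omega$ with $|\Omega\setminus\Omega_0|<\varepsilon$, set $R:=\dist(\Omega_0,\partial\Omega)$, and build nested sets $\Omega_0\Subset\Omega_1\Subset\cdots\Subset\Omega_n\Subset\Omega$ together with cut-offs $\varphi_i\in\CC_c^1(\Omega_i;[0,1])$ satisfying $\varphi_i\equiv 1$ on $\Omega_{i-1}$ and $\|\nabla\varphi_i\|_\infty\le 2n/R$. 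Setting $w_{h,i}(x):=Ax+\varphi_i(x)(u_h(x)-Ax)$, the smoothness of $u_h$ and the compact support of $\varphi_i$ imply $w_{h,i}-Ax\in\CC_c^1(\Omega;\mathbb{R}^d)\subset\WW^{1,\infty}_0(\Omega;\mathbb{R}^d)$, so the first step applies to $w_{h,i}$ and yields $|\Omega|f(A)\leq\liminf_h\int_\Omega f(\mathcal{E}w_{h,i})\,\dif x$. Splitting the integral over $\overline{\Omega_{i-1}}$ (where $\mathcal{E}w_{h,i}=\mathcal{E}u_h$), $\Omega\setminus\Omega_i$ (where $\mathcal{E}w_{h,i}=A$), and the strip $S_i:=\Omega_i\setminus\overline{\Omega_{i-1}}$ (where one uses~\eqref{eq:growth-bd} together with $\mathcal{E}w_{h,i}=A+\varphi_i(\mathcal{E}u_h-A)+\nabla\varphi_i\odot(u_h-Ax)$), and selecting for each $h$ an index $i_h$ so that the strip contribution is at most $n^{-1}$ times its total over $\bigcup_i S_i$ (pigeonhole), one arrives at
\[
|\Omega|f(A)\leq\liminf_{h\to\infty}\int_\Omega f(\mathcal{E}u_h)\,\dif x+|\Omega\setminus\Omega_0|f(A)+O(n^{-1}),
\]
where the $O(n^{-1})$ collects the uniformly $\BD$-bounded contribution of $\int_\Omega|\mathcal{E}u_h|\,\dif x$ together with the $(2n/R)\|u_h-Ax\|_{\LL^1}$ term, which vanishes as $h\to\infty$ by the strong $\LL^1$-convergence $u_h\to Ax$. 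Letting first $n\to\infty$ and then $\varepsilon\downarrow 0$ concludes. No individual step poses a genuine obstacle, consistent with the paper's remark that this lemma is substantially easier than Lemma~\ref{lem:lsc-linear} and may be omitted.
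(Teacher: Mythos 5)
Your plan is exactly what the paper intends here: adapt the two-step cut-off argument of Lemma~\ref{lem:lsc-linear}, dropping the Bogovskii corrector because $\BD(\Omega)$ carries no divergence constraint. That is precisely why the paper calls the $\BD$ counterparts ``substantially easier'' and omits their proofs. Selecting an index $i_h$ by pigeonhole rather than averaging over $i$ (as the paper does in Step~2 of Lemma~\ref{lem:lsc-linear}) is a cosmetic variation; the paper itself uses the pigeonhole form inside Lemma~\ref{lem:measure-property}.

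The one step that deserves scrutiny is the preliminary reduction to $(u_h)\subset\LD(\Omega)\cap\CC^\infty(\Omega;\mathbb{R}^d)$ via mollification and diagonalization. If $u_h\in\BD(\Omega)$ has $E^su_h\neq 0$, any area-strict smooth approximants $v_h^{(k)}\to u_h$ send $\int_\Omega f(\mathcal{E}v_h^{(k)})\,\dif x$ to $\int_\Omega f(\mathcal{E}u_h)\,\dif x$ \emph{plus} a nonnegative contribution from the singular strain, so a lower bound proved along the smooth sequence does not transfer back to $(u_h)$. In fact~\eqref{eq:linear-bd} as stated can fail for genuinely $\BD$ sequences: with $\phi_h$ a bounded staircase approximating $t\mapsto at$ uniformly and with $D\phi_h$ purely atomic, the sequence $u_h(x):=\phi_h(x_1)e_1$ satisfies $u_h\starconv ax_1e_1$ weakly* in $\BD(\Omega)$, yet $\mathcal{E}u_h\equiv 0$, so the right-hand side of~\eqref{eq:linear-bd} equals $|\Omega|f(0)$ while the left-hand side is $|\Omega|f(a\,e_1\otimes e_1)$, which exceeds it for $|a|$ large. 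This imprecision is inherited from the paper: the statement of Lemma~\ref{lem:linear-bd} says $\BD(\Omega)$, and the opening ``without loss of generality'' reduction in the paper's proof of Lemma~\ref{lem:lsc-linear} has the same gap for sequences with nonzero singular strain. It causes no downstream harm because the lemma is only invoked (in Lemma~\ref{lem:lower-est}) for $(v_h)\subset\LD(Q)$, and once $E^su_h=0$ the mollification reduction is clean and the rest of your argument is correct. It would be worth stating the hypothesis as $(u_h)\subset\LD(\Omega)$, which is all you need and all the paper uses.
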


Since the topology of weak* convergence in $\BD(\Omega)$ is metrizable on bounded sets, it follows that the relaxation $\mathcal{F}_{*}$, as defined in~\eqref{eq:relaxBD}, is lower semicontinuous with respect to this topology (cf.~\cite{Attouch14} for details).

In the remaining part of this section we will establish an integral representation for $\mathcal{F}_{*}$, that is
\begin{equation}\label{eq:representation-bd}
\mathcal{F}_{*}[u,\Omega]
=
\int_{\Omega}f(\mathcal{E}u)\,\dif x
+
\int_{\Omega}f^{\#}\left(\frac{\dif E^su}{\dif|E^su|}\right)\,\dif|E^su|.
\end{equation}
More specifically, we will establish the upper and the lower estimate on the relaxation $\mathcal{F}_{*}$ by the right-hand side of~\eqref{eq:representation-bd}. We begin with the upper estimate.

Let us denote by $\mathrm{D}(\mathbb{R}^{d\times d}_{\mathrm{sym}})$ the class of continuous functions $f:\mathbb{R}^{d\times d}_{\mathrm{sym}}\to\mathbb{R}$ with linear growth at infinity and for which the strong recession function
\[
f^\infty(A):=\lim_{A'\to A, \ s\to\infty}\frac{f(sA')}{s}
\]
exists. For such functions we have the following continuity result.
\begin{theorem}[Reshetnyak~\cite{KristensenRindler10Relax}]\label{thm:reshetnyak}
	Let $(\mu_h)\subset\M(\Omega;\mathbb{R}^d)$ be a sequence of measures, such that $ \mu_h\to\mu $ area-strictly for some $\mu\in\M(\Omega;\mathbb{R}^d)$. Then, for $f\in\mathrm{D}(\mathbb{R}^{d\times d}_{\mathrm{sym}})$ it holds that
	\begin{equation*}
	\int_{\Omega}f\left(\frac{\dif\mu_h}{\dif\mathcal{L}^d}\right)\,\dif x+\int_{\Omega}f^\infty\left(\frac{\dif\mu_h^s}{\dif|\mu_h^s|}\right)\,\dif|\mu_h^s|
	\to
	\int_{\Omega}f\left(\frac{\dif\mu}{\dif\mathcal{L}^d}\right)\,\dif x+\int_{\Omega}f^\infty\left(\frac{\dif\mu^s}{\dif|\mu^s|}\right)\,\dif|\mu^s|
	\end{equation*}
	as $h\to\infty$.
\end{theorem}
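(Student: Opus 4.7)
The plan is to reduce the statement to the classical Reshetnyak continuity theorem for continuous, positively $1$-homogeneous integrands under strict convergence of vector measures. The bridge between the two formulations is the \emph{perspective transform} of $f$: define $\tilde{f}:[0,\infty)\times\mathbb{R}^{d\times d}_{\mathrm{sym}}\to\mathbb{R}$ by
\[
\tilde{f}(t,A):=\begin{cases} t\,f(A/t) & \text{if } t>0, \\ f^\infty(A) & \text{if } t=0, \end{cases}
\]
and extend it positively $1$-homogeneously to the closed half-space $\{t\geq 0\}$. The key point is that the \emph{existence} of the strong recession function $f^\infty$ (as a genuine limit, not just a $\limsup$) is precisely what is needed to ensure that $\tilde{f}$ is continuous up to the boundary $\{t=0\}$: if $(t_n,A_n)\to(0,A_0)$ with $t_n>0$, then $t_n f(A_n/t_n)\to f^\infty(A_0)$ exactly by the definition of the strong recession function.

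Next, to each measure $\nu\in\M(\Omega;\mathbb{R}^{d\times d}_{\mathrm{sym}})$ associate the lifted vector measure $\hat{\nu}:=(\mathcal{L}^d,\nu)\in\M(\Omega;\mathbb{R}\times\mathbb{R}^{d\times d}_{\mathrm{sym}})$. A direct computation from the Lebesgue decomposition $\nu=(\dif\nu/\dif\mathcal{L}^d)\mathcal{L}^d+\nu^s$ shows
\[
|\hat{\nu}|(A)=\int_A\sqrt{1+\left|\tfrac{\dif\nu}{\dif\mathcal{L}^d}\right|^{2}}\,\dif x+|\nu^s|(A)=\langle\nu\rangle(A),
\]
so the total variation of the lift equals the area measure. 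Consequently, the assumed area-strict convergence $\mu_h\to\mu$ (that is, $\mu_h\starconv\mu$ weakly* together with $\langle\mu_h\rangle(\Omega)\to\langle\mu\rangle(\Omega)$) translates \emph{exactly} into the classical strict convergence $\hat{\mu}_h\to\hat{\mu}$ in $\M(\Omega;\mathbb{R}\times\mathbb{R}^{d\times d}_{\mathrm{sym}})$.

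The third step is to rewrite both sides of the claimed convergence in terms of $\tilde{f}$ and $\hat{\mu}$. Using the $1$-homogeneity of $\tilde{f}$ together with the identities $\dif\mathcal{L}^d/\dif\langle\nu\rangle=(1+|\dif\nu/\dif\mathcal{L}^d|^{2})^{-1/2}$ on the absolutely continuous part and $\dif\mathcal{L}^d/\dif\langle\nu\rangle=0$ on the singular part, one verifies
\[
\int_{\Omega}f\!\left(\tfrac{\dif\nu}{\dif\mathcal{L}^d}\right)\dif x+\int_{\Omega}f^\infty\!\left(\tfrac{\dif\nu^s}{\dif|\nu^s|}\right)\dif|\nu^s|=\int_{\Omega}\tilde{f}\!\left(\tfrac{\dif\hat{\nu}}{\dif|\hat{\nu}|}\right)\dif|\hat{\nu}|.
\]
With this reduction in hand, one applies the classical Reshetnyak continuity theorem — for a strictly convergent sequence $\hat{\mu}_h\to\hat{\mu}$ and the continuous, positively $1$-homogeneous integrand $\tilde{f}$ — to obtain the desired convergence of the right-hand sides, and hence of the original integrals.

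The main obstacle is verifying the continuity of $\tilde{f}$ along sequences $(t_n,A_n)$ with $t_n\downarrow 0$ and $|A_n|\to\infty$ in a controlled way; this is where one genuinely needs $f\in\mathrm{D}(\mathbb{R}^{d\times d}_{\mathrm{sym}})$, since for integrands for which only $f^\#$ (a $\limsup$) exists, $\tilde{f}$ would merely be upper semicontinuous at the boundary and one would only get Reshetnyak-type semicontinuity rather than full continuity. A minor technical check is the claim that area-strict convergence of $\mu_h$ is equivalent to strict convergence of $\hat{\mu}_h$; this follows once one verifies weak* convergence of $\hat{\mu}_h$ (which is immediate from $\mu_h\starconv\mu$ and the trivial fact that the $\mathbb{R}$-component is constantly $\mathcal{L}^d$) together with the total variation convergence noted above.
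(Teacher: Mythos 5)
Your proposal is correct and matches the standard proof of this result. Note that the paper itself does not prove Theorem~\ref{thm:reshetnyak}; it is stated and cited from the reference \cite{KristensenRindler10Relax}, where essentially the same lifting argument you describe is carried out. Your reduction via the perspective function $\tilde{f}(t,A)=t f(A/t)$ (with $\tilde{f}(0,\cdot)=f^\infty$), the lifted measure $\hat\nu=(\mathcal L^d,\nu)$, the identity $|\hat\nu|=\langle\nu\rangle$, and the appeal to the classical Reshetnyak continuity theorem for $1$-homogeneous integrands under strict convergence is exactly the right strategy, and you correctly isolate the role of the hypothesis $f\in\mathrm{D}(\mathbb R^{d\times d}_{\mathrm{sym}})$: the existence of the strong recession function $f^\infty$ as a genuine double limit is what gives continuity of $\tilde f$ up to $\{t=0\}$, whereas for integrands admitting only $f^\#$ one would only get upper semicontinuity of $\tilde f$ there and hence only one-sided Reshetnyak semicontinuity. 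One small point worth making explicit: the classical continuity theorem (e.g.\ Theorem~2.39 in \cite{AmbrosioFuscoPallara00}) requires the integrand to be bounded and continuous on the full unit sphere, so one should observe that the polar densities $\tfrac{\dif\hat\mu_h}{\dif|\hat\mu_h|}$, $\tfrac{\dif\hat\mu}{\dif|\hat\mu|}$ always lie in the closed half-space $\{t\geq 0\}$ and then extend $\tilde f$ continuously (and boundedly on the sphere, which the linear growth of $f$ guarantees) to $\{t<0\}$ in an arbitrary way — the extension never enters the integrals.
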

Furthermore, it turns out that the admissible integrands $f$ in Theorem~\ref{thm:lsc-bd} can be approximated by functions in $\mathrm{D}(\mathbb{R}^{d\times d}_{\mathrm{sym}})$ (cf.~\cite[Lemma~2.2]{KristensenRindler10YM}).
\begin{lemma}[Pointwise approximation]\label{lem:pointwise-approximation}
	For every continuous function $f:\mathbb{R}^{d\times d}_{\mathrm{sym}}\to\mathbb{R}$ with a linear growth at infinity, there exists a decreasing sequence $(f_k)\subset\mathbf{D}(\mathbb{R}^{d\times d}_{\mathrm{sym}})$, such that
	\[
	\inf_k f_k=\lim_{k\to\infty}f_k=f \quad \text{and} \quad \inf_k f_k^\infty=\lim_{k\to\infty}f_k^\infty=f^{\#},
	\]
	with pointwise convergence.
\end{lemma}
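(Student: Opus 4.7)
The plan is to (i) approximate $f^{\#}$ strictly from above by continuous, positively $1$-homogeneous functions $g_k \downarrow f^{\#}$, then (ii) produce $\hat f_k \in \mathrm{D}(\mathbb{R}^{d\times d}_{\mathrm{sym}})$ satisfying $\hat f_k \geq f$ and $\hat f_k^{\infty} = g_k$, and finally (iii) pass to cumulative minima to enforce monotonicity.

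For step (i), note that $f^{\#}$ is upper semicontinuous (as a $\limsup$) and positively $1$-homogeneous (Remark~\ref{rem:propertiesOfRecessionF}), with $|f^{\#}(A)| \leq C|A|$ by the linear growth of $f$. A Moreau--Yosida-type regularization,
\[
\tilde g_k(A) := \sup_{B \in \mathbb{R}^{d\times d}_{\mathrm{sym}}} \bigl( f^{\#}(B) - k|A-B| \bigr),
\]
produces, for $k > C$, a $k$-Lipschitz, positively $1$-homogeneous function with $\tilde g_k \downarrow f^{\#}$ pointwise (positive $1$-homogeneity comes from the substitution $B \mapsto tB$, and the convergence is the standard Moreau--Yosida fact for upper semicontinuous functions). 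Setting $g_k := \tilde g_k + \varepsilon_k|\cdot|$ with any $\varepsilon_k \downarrow 0$ still decreases pointwise to $f^{\#}$, remains continuous and positively $1$-homogeneous, and now satisfies the strict inequality $g_k(A) > f^{\#}(A)$ for every $A \neq 0$.

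For step (ii), choose a smooth radial cut-off $\eta_k$ with $\eta_k \equiv 1$ on $B_{R_k}$ and $\eta_k \equiv 0$ off $B_{2R_k}$, where $R_k \uparrow \infty$, and set
\[
\hat f_k(A) := \max\bigl\{ f(A),\; \eta_k(A) f(A) + \bigl(1-\eta_k(A)\bigr) g_k(A) \bigr\}.
\]
Then $\hat f_k \geq f$ everywhere and $\hat f_k \equiv f$ on $B_{R_k}$. Outside $B_{2R_k}$ one has $\hat f_k = \max(f, g_k)$; since $g_k(A) > f^{\#}(A) = \limsup_{s\to\infty,\, A'\to A} f(sA')/s$ and $g_k$ is continuous, the inequality $g_k(A') > f(sA')/s$ holds for all $A'$ near a fixed $A$ and all sufficiently large $s$. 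Consequently $\hat f_k(sA')/s \to g_k(A)$ as $s \to \infty$ and $A' \to A$, so the strong recession function $\hat f_k^{\infty} = g_k$ exists and $\hat f_k \in \mathrm{D}(\mathbb{R}^{d\times d}_{\mathrm{sym}})$.

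For step (iii), set $f_k := \min\{\hat f_1, \ldots, \hat f_k\}$. The class $\mathrm{D}$ is closed under finite minima with the recession passing to the minimum, so $f_k \in \mathrm{D}$ with $f_k^{\infty} = \min\{g_1, \ldots, g_k\} = g_k$ by monotonicity of $(g_k)$. By construction $(f_k)$ is decreasing and $f_k \geq f$; moreover, for any fixed $A$, as soon as $R_k > |A|$ one has $\hat f_k(A) = f(A)$, whence $f_k(A) = f(A)$ for all large $k$, yielding $f_k \downarrow f$ pointwise and consequently $f_k^{\infty} \downarrow f^{\#}$. The only delicate point is the use of the strict domination $g_k > f^{\#}$ off the origin (arranged by the perturbation $\varepsilon_k|\cdot|$) to upgrade the $\limsup$ defining $f^{\#}$ into a genuine two-variable limit in step (ii); this is precisely how membership of $\hat f_k$ in $\mathrm{D}(\mathbb{R}^{d\times d}_{\mathrm{sym}})$ is secured in~\cite[Lemma~2.2]{KristensenRindler10YM}.
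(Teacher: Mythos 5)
The paper does not prove this lemma itself; it cites Kristensen--Rindler~\cite[Lemma~2.2]{KristensenRindler10YM}. Your reconstruction is correct and follows the same standard scheme as the cited source: approximate the upper semicontinuous, positively $1$-homogeneous $f^{\#}$ from above by Lipschitz $1$-homogeneous sup-convolutions, add a small $\varepsilon_k\,|\cdot|$ perturbation to secure strict domination off the origin (which is what upgrades the $\limsup$-recession of $\hat f_k$ to a genuine strong limit), patch to $f$ with radial cut-offs, and pass to cumulative minima using that $\mathrm{D}$ is stable under finite minima with the recession passing to the minimum.
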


We are now ready to establish the upper bound.
\begin{lemma}[Upper estimate]\label{lem:upper-bound-bd}
	For $u\in\BD(\Omega)$ the inequality
	\[
	\mathcal{F}_{*}[u,\Omega]
	\leq
	\int_{\Omega}f(\mathcal{E}u)\,\dif x
	+
	\int_{\Omega}f^{\#}\left(\frac{\dif E^su}{\dif|E^su|}\right)\,\dif|E^su|.
	\]
	holds.
\end{lemma}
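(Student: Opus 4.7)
The plan is to combine three ingredients that are already set up in the excerpt: (i) area-strict approximation of $\BD$-functions by smooth $\LD$-functions, which produces an admissible recovery sequence; (ii) Reshetnyak's continuity theorem (Theorem~\ref{thm:reshetnyak}) applied to integrands with a strong recession function; and (iii) the pointwise approximation of $f$ from above by functions in $\mathrm{D}(\mathbb{R}^{d\times d}_{\mathrm{sym}})$ (Lemma~\ref{lem:pointwise-approximation}).

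First I would fix $u\in\BD(\Omega)$ and choose, via standard mollification, a sequence $(u_h)\subset\LD(\Omega)\cap\CC^\infty(\Omega;\mathbb{R}^d)$ with $u_h\to u$ area-strictly in $\BD(\Omega)$ (cf.\ Theorem~14.1.4 in~\cite{Attouch14}). Area-strict convergence entails strong $\LL^1$-convergence together with uniform $\BD$-norm boundedness, so by the $\BD$-analogue of Lemma~\ref{lem:convInU} (noted immediately after that lemma) we have $u_h\starconv u$ weakly$^{*}$ in $\BD(\Omega)$; hence $(u_h)$ is admissible in the definition~\eqref{eq:relaxBD} of $\mathcal{F}_{*}$.

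Next I would apply Lemma~\ref{lem:pointwise-approximation} to obtain a decreasing sequence $(f_k)\subset\mathrm{D}(\mathbb{R}^{d\times d}_{\mathrm{sym}})$ with $f_k\downarrow f$ and $f_k^\infty\downarrow f^{\#}$ pointwise. For each fixed $k$, since $u_h \in \LD(\Omega)$ means $(Eu_h)^s = 0$, Theorem~\ref{thm:reshetnyak} applied to the area-strict convergence $Eu_h\to Eu$ yields
\[
\lim_{h\to\infty}\int_{\Omega} f_k(\mathcal{E}u_h)\,\dif x
=
\int_{\Omega} f_k(\mathcal{E}u)\,\dif x + \int_{\Omega} f_k^\infty\left(\frac{\dif E^s u}{\dif|E^s u|}\right)\,\dif|E^s u|.
\]
The inequality $f\le f_k$ and the definition of $\mathcal{F}_{*}$ give
\[
\mathcal{F}_{*}[u,\Omega]
\le
\liminf_{h\to\infty}\int_{\Omega}f(\mathcal{E}u_h)\,\dif x
\le
\int_{\Omega} f_k(\mathcal{E}u)\,\dif x + \int_{\Omega} f_k^\infty\left(\frac{\dif E^s u}{\dif|E^s u|}\right)\,\dif|E^s u|.
\]
Finally, since $f_k\ge f\ge 0$ and $f_k^\infty\ge f^{\#}\ge 0$ with the monotone pointwise convergences $f_k\downarrow f$ and $f_k^\infty\downarrow f^{\#}$, the monotone convergence theorem lets me pass to the limit $k\to\infty$ in both integrals on the right, which delivers the asserted upper bound.

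There is no genuine obstacle in this argument, but it is worth emphasising \emph{why} the detour through $(f_k)$ is necessary: the weak upper recession $f^{\#}$ is defined via a limsup and need not coincide with, or even correspond to, a strong recession $f^\infty$ — indeed the latter may fail to exist for admissible $f$ as noted after the statement of Theorem~\ref{thm:lsc-bd}. Consequently, Reshetnyak's theorem is not applicable to $f$ directly, and the approximation by $\mathrm{D}(\mathbb{R}^{d\times d}_{\mathrm{sym}})$-integrands is the mechanism that converts the area-strict approximation of $u$ into a quantitative bound involving $f^{\#}$.
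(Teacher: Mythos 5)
Your proposal is correct and follows essentially the same route as the paper's proof: area-strict approximation by smooth $\LD$-functions, the decreasing approximation $(f_k)\subset\mathrm{D}(\mathbb{R}^{d\times d}_{\mathrm{sym}})$ from Lemma~\ref{lem:pointwise-approximation}, Reshetnyak's continuity theorem applied for each fixed $k$, and monotone convergence in $k$. Your concluding remark on why the detour through $(f_k)$ is needed is a helpful addition but does not change the substance of the argument.
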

\begin{proof}
	Fix $u\in\BD(\Omega)$. There exists a sequence $(u_h)\subset\LD(\Omega)\cap\CC^\infty(\Omega;\mathbb{R}^d)$, such that $u_h\to u$ area-strictly (cf.~\cite[Theorem~14.1.1]{Attouch14}). Let $(f_k)\subset\mathrm{D}(\mathbb{R}^{d\times d}_{\mathrm{sym}})$ be a sequence as in Lemma~\ref{lem:pointwise-approximation}. By Theorem~\ref{thm:reshetnyak} we have for each $k\in\mathbb{N}$:
	\begin{equation*}
	\lim_{h\to\infty}\int_{\Omega}f_k(\mathcal{E}u_h)\,\dif x
	=
	\int_{\Omega}f_k(\mathcal{E}u)\,\dif x
	+
	\int_{\Omega}f_k^\infty\left(\frac{\dif E^su}{\dif|E^su|}\right)\,\dif|E^su|.
	\end{equation*}
	Hence,
	\begin{equation*}
	\liminf_{h\to\infty}\int_{\Omega}f(\mathcal{E}u_h)\,\dif x
	\leq
	\int_{\Omega}f_k(\mathcal{E}u)\,\dif x
	+
	\int_{\Omega}f_k^\infty\left(\frac{\dif E^su}{\dif|E^su|}\right)\,\dif|E^su|.
	\end{equation*}
	Since the area-strict convergence is stronger than the weak* convergence, by the definition of $\mathcal{F}_{*}$, it follows that
	\[
	\mathcal{F}_{*}[u,\Omega]
	\leq
	\liminf_{h\to\infty}\int_{\Omega}f(\mathcal{E}u_h)\,\dif x
	\leq
	\int_{\Omega}f_k(\mathcal{E}u)\,\dif x
	+
	\int_{\Omega}f_k^\infty\left(\frac{\dif E^su}{\dif|E^su|}\right)\,\dif|E^su|.
	\]
	By the monotone convergence theorem, letting $k\to\infty$ ends the proof. \qedhere
\end{proof}

As in the proof of Theorem~\ref{thm:main-result}, to prove the lower estimate, we first prove that for a given $u\in\BD(\Omega)$ the map $V\mapsto\mathcal{F}_{*}[u,V]$ is the restriction to the open subsets of $\Omega$ of some Radon measure, which we still denote by $\mathcal{F}_{*}[u,\cdot]$. Then, we decompose this measure into the absolutely continuous and singular parts with respect to the Lebesgue measure, i.e.
\[
\mathcal{F}_{*}[u,\cdot]=\mathcal{F}_{*}^a[u,\cdot]+\mathcal{F}_{*}^s[u,\cdot], \quad \mathcal{F}_{*}^a[u,\cdot] \ll \mathcal{L}^d\mres\Omega, \quad \mathcal{F}_{*}^s[u,\cdot]\perp\mathcal{L}^d\mres\Omega
\]
and then prove that
\[
\mathcal{F}_{*}^a[u,B]\geq\int_B f(\mathcal{E}u)\,\dif x
\quad\text{and}\quad
\mathcal{F}_{*}^s[u,B]\geq\int_{B}f^{\#}\left(\frac{\dif E^su}{\dif|E^su|}\right)\dif|E^su|
\]
for any Borel set $B\subset\Omega$.

\begin{lemma}\label{lem:measure-property-bd}
	For all $u\in\BD(\Omega)$ the set function $V\mapsto\mathcal{F}_{*}[u,V]$ is a restriction to the open subsets of $\Omega$ of a finite Radon measure.
\end{lemma}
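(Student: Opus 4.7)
The plan is to mirror the proof of Lemma~\ref{lem:measure-property}, applying the De Giorgi--Letta criterion (Theorem~\ref{thm:degiorgi-letta}) to the set function $V\mapsto\mathcal{F}_{*}[u,V]$ for $V\subset\Omega$ open, fixed $u\in\BD(\Omega)$. The properties $\mu(\emptyset)=0$, monotonicity, and superadditivity on disjoint open sets are straightforward (the latter follows because one can extract a sequence $(u_h)\subset\LD(A\cup B)$ realising $\mathcal{F}_{*}[u,A\cup B]$ and split its energy between disjoint $A$ and $B$, using the superadditivity of $\liminf$). The key and only substantive step is to prove the subadditivity estimate
\[
\mathcal{F}_{*}[u,A'\cup B]\leq \mathcal{F}_{*}[u,A'']+\mathcal{F}_{*}[u,B], \qquad A'\Subset A'',
\]
followed by an inner-regularity step to upgrade this to
\[
\mathcal{F}_{*}[u,A\cup B]\leq \mathcal{F}_{*}[u,A]+\mathcal{F}_{*}[u,B].
\]

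For subadditivity, I would follow exactly the cut-off scheme of Step~1 in the proof of Lemma~\ref{lem:measure-property}. Fix $\varepsilon>0$, pick recovery sequences $(u_h)\subset\LD(A'')$ and $(v_h)\subset\LD(B)$ with $u_h\starconv u$, $v_h\starconv u$, and energies within $\varepsilon$ of $\mathcal{F}_{*}[u,A'']$ and $\mathcal{F}_{*}[u,B]$. Choose nested open sets $A'=A_0\Subset A_1\Subset\cdots\Subset A_k\Subset A''$ and cut-off functions $\varphi_i\in\CC_c^1(A_i;[0,1])$ with $\varphi_i\equiv 1$ on $A_{i-1}$, then define
\[
w_{h,i}:=\varphi_i u_h+(1-\varphi_i)v_h, \qquad i=1,\ldots,k.
\]
The crucial simplification compared to the $\UU$-case is that no Bogovskii corrector is required: since $\BD$ has no divergence constraint, $w_{h,i}\in\LD(A'\cup B)$ automatically, and $w_{h,i}\to u$ in $\LL^1$ with $|Ew_{h,i}|(A'\cup B)$ uniformly bounded, whence $w_{h,i}\starconv u$ weakly$^{*}$ in $\BD(A'\cup B)$ by the $\BD$-analogue of Lemma~\ref{lem:convInU}.

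Using the linear upper bound $f(A)\leq M(1+|A|)$, I would estimate
\[
\mathcal{F}[w_{h,i},A'\cup B]\leq \mathcal{F}[u_h,A'']+\mathcal{F}[v_h,B]+M\int_{B\cap S_i}1+|\mathcal{E}u_h|+|\mathcal{E}v_h|+C_k|u_h-v_h|\,\dif x,
\]
where $S_i=A_i\setminus\overline{A_{i-1}}$ and $C_k=\sup_i\norm{\nabla\varphi_i}_\infty$. By a pigeonhole argument I pick $i=i_h\in\{1,\ldots,k\}$ so that the integral over $B\cap S_{i_h}$ is bounded by $C_\varepsilon/k$, where $C_\varepsilon:=\sup_h(|A''|+|Eu_h|(A'')+|B|+|Ev_h|(B))<\infty$. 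Since $u_h-v_h\to 0$ in $\LL^1(A'\cup B;\mathbb{R}^d)$, passing to the liminf in $h$ and then letting $k\to\infty$ and $\varepsilon\downarrow 0$ yields the desired inequality. The main (but routine) obstacle is the bookkeeping of the strip estimate, which is genuinely simpler here than in Lemma~\ref{lem:measure-property}.

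Finally, inner regularity, $\mathcal{F}_{*}[u,A]=\sup\{\mathcal{F}_{*}[u,A']:A'\Subset A\text{ open}\}$, follows from the a priori bound $\mathcal{F}_{*}[u,V]\leq M(|V|+|Eu|(V))$ obtained by testing with an area-strictly convergent smooth approximation of $u$ (cf.\ the proof of Lemma~\ref{lem:upper-bound-bd}): given any $\varepsilon>0$ there is a compact $K\subset A$ with $\mathcal{F}_{*}[u,A\setminus K]\leq\varepsilon$, and then Step~1 applied with $B=A\setminus K$ concludes. Combining subadditivity on quasi-contained pairs with inner regularity gives unrestricted subadditivity, and De Giorgi--Letta (Theorem~\ref{thm:degiorgi-letta}) delivers the finite Radon measure, finiteness being guaranteed by the upper bound $\mathcal{F}_{*}[u,\Omega]\leq M(|\Omega|+|Eu|(\Omega))<\infty$.
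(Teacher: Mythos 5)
Your proposal is correct and matches the paper's intent exactly: the paper explicitly states that Lemma~\ref{lem:measure-property-bd} is proved by a straightforward adaptation of Lemma~\ref{lem:measure-property} and omits the details, and your proposal carries out precisely that adaptation, correctly identifying that the essential simplification is the removal of the Bogovskii corrector since $\BD$ imposes no divergence constraint, so $w_{h,i}=\varphi_i u_h+(1-\varphi_i)v_h\in\LD(A'\cup B)$ automatically.
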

The proof of Lemma~\ref{lem:measure-property-bd} is a straightforward adaptation of Lemma~\ref{lem:measure-property} so we omit the details here.

\begin{remark}\label{rem:invariances-bd}
	The relaxation $\mathcal{F}_{*}$ satisfies the same invariant properties as in Remark~\ref{rem:invariances}.
\end{remark}

\begin{lemma}\label{lem:lower-est}
	Let $Q$ be an open $d$-cube with side length 1 and faces either parallel or orthogonal to $a$, let  $v\in\BD(Q)$ be representable in $Q$ as
	\[
	v(y):=g(y\cdot a)b+c(a\otimes b)y+Wy+\bar{v},
	\]
	where $g:\mathbb{R}\to\mathbb{R}$ is a locally bounded and increasing function, $a,b\in\mathbb{R}^d\setminus\{0\}$, $c>0$, $W\in\mathbb{R}^{d\times d}_{\mathrm{skew}}$ and $\bar{v}\in\mathbb{R}^d$. Let $u\in\BD(Q)$ be such that $\supp(u-v)\Subset Q$. Then,
	\[
	\mathcal{F}_{*}[u,Q]\geq f(Eu(Q)).
	\]
\end{lemma}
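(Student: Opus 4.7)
By the rigid and translation invariances recorded in Remark~\ref{rem:invariances-bd}, I may assume without loss of generality that $a=e_1$, $Q=(-\tfrac12,\tfrac12)^d$, $W=0$, and $\bar v=0$. After slightly shrinking $Q$ if necessary, I may also assume that $g$ is continuous at $\pm\tfrac12$. Let $\bar g$ denote the affine function on $[-\tfrac12,\tfrac12]$ with $\bar g(\pm\tfrac12)=g(\pm\tfrac12)$, and set
\[
\ell(y):=\bar g(y_1)\,b+c(a\otimes b)y.
\]
Then $\ell$ is affine with constant symmetric gradient $\mathcal{E}\ell\equiv A:=Eu(Q)=\bigl(g(\tfrac12)-g(-\tfrac12)+c\bigr)(a\odot b)$, and $\ell$ agrees with $v$ on the two faces $\{y_1=\pm\tfrac12\}$, while $v-\ell=[g(y_1)-\bar g(y_1)]\,b$ depends only on $y_1$ and is valued in the fixed direction $b$.

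The plan is to build, from a recovery sequence $(u_h)\subset\LD(Q)$ for $\mathcal{F}_{*}[u,Q]$ (which may be taken smooth by the $\BD$-analogue of Remark~\ref{rem:area-strict-continuous-extension}), a modified sequence $(w_h)\subset\LD(Q)$ such that $w_h-\ell$ has vanishing trace on $\partial Q$ and
\[
\int_Q f(\mathcal{E}w_h)\,\dif y\;\leq\;\int_Q f(\mathcal{E}u_h)\,\dif y+o(1)\quad\text{as }h\to\infty.
\]
Given such $(w_h)$, the symmetric quasiconvexity of $f$ (Definition~\ref{def:sym-qc}, extended from $\WW^{1,\infty}_0$ to zero-trace $\LD$-test fields by density and the linear growth of $f$), applied with target matrix $A$ and test field $w_h-\ell$, yields $f(A)\,|Q|\leq\int_Q f(\mathcal{E}w_h)\,\dif y$, whence $f(Eu(Q))=f(A)\leq\mathcal{F}_{*}[u,Q]$, as required.

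The construction of $(w_h)$ is a two-stage De Giorgi-type cut-off. In the first stage I transition $u_h$ to $v$ in a thin boundary collar: because $u=v$ near $\partial Q$ and $u_h\to u$ in $\LL^1$, the pigeonhole principle applied to a family $\{\varphi_i\}_{i=1}^N$ of nested cut-offs selects an index $i(h)$ for which the correction term $\nabla\varphi_{i(h)}\odot(u_h-v)$ is $o(1)$ in $\LL^1$ and the $\mathcal{E}u_h$-mass on the transition layer is $O(1/N)$. In the second stage I transition $v$ to $\ell$ using a cut-off $\psi(y')$ depending only on the coordinates orthogonal to $a$, which is available precisely because $v=\ell$ on the faces normal to $a$ and $v-\ell$ has the simple laminar structure noted above. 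A preliminary smoothing of $g$ by increasing $g_n\in C^\infty$, followed by monotone convergence at the end, removes technical complications from possible jumps of $g$. The principal obstacle is the second stage: since $\|v-\ell\|_{\LL^\infty}$ is not generally small, the error $\nabla\psi\odot(v-\ell)$ is not automatically $o(1)$; however, this perturbation lies in the linear subspace spanned by $\{e_j\odot b:j\geq 2\}$, which is transverse to the bulk rank-one direction $a\odot b$, and combined with pigeonholing over many nested side-collars, the linear growth of $f$, and the Lipschitz control afforded by symmetric rank-one convexity (cf.\ part~(2) of Remark~\ref{rem:propertiesOfRecessionF}), the error can be absorbed into the negligible terms.
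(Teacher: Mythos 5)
Your proposal attempts a direct cut-off/gluing argument, which is a genuinely different route from the paper, but you have correctly identified the obstacle and then not resolved it — there is a gap in what you call the second stage. The error $\nabla\psi\odot(v-\ell)=(g(y_1)-\bar g(y_1))\,\nabla\psi(y')\odot b$ has $\LL^1(Q)$-mass comparable to $\|\nabla\psi\|_{\LL^1}\cdot\|g-\bar g\|_{\LL^1(-1/2,1/2)}$, and $\|\nabla\psi\|_{\LL^1}$ is bounded below by a dimensional constant however thin the side collar is (the cut-off must still traverse from $0$ to $1$). Consequently the layer mass is $\Theta(1)$ \emph{uniformly over the layers}, so pigeonholing over $N$ nested side-collars finds no good one — each carries roughly the same $O(1)$ error. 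Transversality of $e_j\odot b$, $j\geq 2$, to $a\odot b$ does not help either: symmetric rank-one convexity only controls $f$ along rank-one line segments, it gives no cancellation or improved bound in transverse directions, and the Lipschitz estimate you invoke merely turns the $O(1)$ $\LL^1$-perturbation into an $O(1)$ error in the energy. Smoothing $g$ does not remove the difficulty, since $\|g-\bar g\|_{\LL^1}$ is generically bounded away from zero even for smooth increasing $g$.

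The paper sidesteps this entirely by a periodic-tiling argument. Using $q:=g(1^-)-g(0^+)$, it extends $u$ (which equals $v$ near $\partial Q$) to a $\BD_{\mathrm{loc}}(\mathbb{R}^d)$-function $w$ by adding the lattice shifts $q e_2\lfloor x_1\rfloor+c e_1\lfloor x_2\rfloor+Wx+\bar v$, sets $u_h(y):=w(hy)/h$, and checks $u_h\starconv u_0$ weakly$^*$ in $\BD(Q)$ where $u_0$ is affine with $\mathcal{E}u_0=Eu(Q)$. The measure and scaling properties of $\mathcal{F}_*$ give $\mathcal{F}_*[u_h,Q]=\sum_i\mathcal{F}_*[u_h,Q_i]=\mathcal{F}_*[u,Q]$ (the tile interfaces carry no $\mathcal{F}_*$-mass because $u=v$ near $\partial Q$), so weak$^*$ lower semicontinuity of $\mathcal{F}_*$ yields $\mathcal{F}_*[u,Q]\geq\mathcal{F}_*[u_0,Q]$, and Lemma~\ref{lem:linear-bd} closes the proof. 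This avoids ever matching $v$ to an affine function across the side faces; it is the mechanism your proposal is missing. Your remaining pieces (the first-stage transition from $u_h$ to $v$ in a boundary collar, and the final appeal to symmetric quasiconvexity via Lemma~\ref{lem:linear-bd}) are sound and essentially coincide with the paper's ingredients.
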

\begin{proof}
	We only treat the case where $a,b$ are not parallel. The case $a,b$ parallel is in fact easier.
	In virtue of the above remark, we may without loss of generality further assume that $a=e_1$, $b=e_2$ and $Q=(0,1)^d$. Then,
	\[
	v(y)=g(y_1)e_2+cy_2e_1+Wy+\bar{v}.
	\]
	Let
	\[
	q:=|Dg|(0,1)=g(1^-)-g(0^+).
	\]
	Since $u\in\BD(Q)$, the function
	\[
	w(x):=u(x-\lfloor x\rfloor)+qe_2\lfloor x_1\rfloor+ce_1\lfloor x_2\rfloor+Wx+\bar{v}, \quad x\in\mathbb{R}^d,
	\]
	is in $\BD_{\mathrm{loc}}(\mathbb{R}^d)$. Here the floor function of a vector $x\in\mathbb{R}^d$ is understood component-wise. Let $u_h(y):=w(hy)/h$, $h \in \mathbb{N}$. For
	\[
  	u_0(y):=qe_2y_1+ce_1y_2+Wy+\bar{v}
	\]
	it holds that
	\begin{align*}
	&\int_Q |u_h(y)-u_0(y)|\,\dif y \\
	&\quad=
	\frac{1}{h}\int_Q |u(hy-\lfloor hy\rfloor)-qe_2(hy_1-\lfloor hy_1\rfloor)-ce_1(hy_2-\lfloor hy_2\rfloor)|\,\dif y \\
	&\quad=
	\frac{1}{h^{d+1}}\int_{(0,h)^d}|u(x-\lfloor x\rfloor)-qe_2(x_1-\lfloor x_1\rfloor)-ce_1(x_2-\lfloor x_2\rfloor)|\,\dif x \\
	&\quad=
	\frac{1}{h}\int_Q|w(y)-(qe_2y_1+ce_1y_2+Wy+\bar{v})|\,\dif y,
	\end{align*}
	hence $u_h\to u_0$ in $\LL^1(Q;\mathbb{R}^d)$. The sequence $(u_h)$ is uniformly norm-bounded in $\BD(Q)$, so we also have that $ u_h\starconv u_0$ weakly* in $\BD(Q)$ (the argument is the same as in Lemma~\ref{lem:convInU}).

	Let $Q_1,\ldots,Q_{h^d}$ be the canonical decomposition of $Q$ into open cubes with sides parallel to those of $Q$ and side length $1/h$. Then, by the scaling property of $\mathcal{F}_{*}$, for all $i=1,\ldots,h^d$ it holds that
	\[
	\mathcal{F}_{*}[u_h,Q_i]=\mathcal{F}_{*}[u_h,(0,1/h)^d]=h^{-d}\mathcal{F}_{*}[u,Q].
	\]
	Moreover, since $\supp(u-v)\Subset Q$, the measure $|Ew|$ vanishes on every hyperplane of the form $x_j=k$, with $k\in\mathbb{Z}$, $j=1,\ldots,d$. Thus we have that $|Eu_h|(Q\cap\partial Q_i)=0$ for all $i=1,\ldots,h^d$. Note that for any open set $A\subset Q$ the inequality
	\[
	\mathcal{F}_{*}[u,A]
	\leq 
	M\left(\mathcal{L}^d(A)+|Eu|(A)\right)
	\]
	holds as a consequence of the linear growth of the integrand and the density of smooth functions with respect to the strict convergence. By the regularity of measures this inequality can then be extended to any Borel set, hence
	\[
	\mathcal{F}_{*}[u,Q\cap\partial Q_i]=0.
	\]
	Therefore, for any $h\in\mathbb{N}$ we obtain
	\[
	\mathcal{F}_{*}[u_h,Q]=\sum_{i=1}^{h^d}\mathcal{F}_{*}[u_h,Q_i]=\sum_{i=1}^{h^d}h^{-d}\mathcal{F}_{*}[u,Q]=\mathcal{F}_{*}[u,Q].
	\]
	By the weak* lower semicontinuity of $\mathcal{F}_{*}$ we obtain
	\[
	\mathcal{F}_{*}[u,Q]
	=
	\lim_{h\to\infty}\mathcal{F}_{*}[u_h,Q]
	\geq
	\mathcal{F}_{*}[u_0,Q].
	\]
	Let $S\in\mathbb{R}^{d\times d}_{\mathrm{skew}}$ be the skew-symmetric matrix defined as
	\[
	S:=\frac{q-c}{2}(e_1\otimes e_2-e_2\otimes e_1).
	\]
	Then, by Remark~\ref{rem:invariances-bd} we obtain
	\begin{align*}
	\mathcal{F}_{*}[u_0,Q]
	&=
	\mathcal{F}_{*}[q(e_2\otimes e_1)y+c(e_1\otimes e_2)y+Wy+\bar{v},Q] \\
	&=
	\mathcal{F}_{*}[q(e_2\otimes e_1)y+c(e_1\otimes e_2)y+Sy,Q] \\
	&=
	\mathcal{F}_{*}[(q+c)(e_1\odot e_2)y,Q].
	\end{align*}
	In virtue of Lemma~\ref{lem:linear-bd}, for every $(v_h)\subset\LD(Q)$ such that $v_h\starconv (q+c)(e_1\odot e_2)y$ weakly* in $\BD(Q)$ it holds that
	\[
	\liminf_{h\to\infty}\mathcal{F}[v_h,Q]\geq\mathcal{F}[(q+c)(e_1\odot e_2)y,Q].
	\]
	Taking the infimum over all such sequences yields
	\[
	\mathcal{F}_{*}[(q+c)(e_1\odot e_2)y,Q]\geq \mathcal{F}[(q+c)(e_1\odot e_2)y,Q].
	\]
	Since $Eu(Q)=Ev(Q)$ and, by the definition of $q$, $Ev(Q)=Eu_0(Q)=(q+c)(e_1\odot e_2)$, we can write
	\[
	\mathcal{F}_{*}[u,Q]
	\geq
	\mathcal{F}_{*}[u_0,Q]
	\geq
	\mathcal{F}[(q+c)(e_1\odot e_2)y,Q]
	=
	f(Eu(Q)).
	\]
	This proves the lemma. \qedhere
\end{proof}

\begin{lemma}[Lower estimate]\label{lem:lower-bound}
	For $u\in\BD(\Omega)$ the inequality
	\[
	\mathcal{F}_{*}[u,\Omega]
	\geq
	\int_{\Omega}f(\mathcal{E}u)\,\dif x
	+
	\int_{\Omega}f^{\#}\left(\frac{\dif E^su}{\dif|E^su|}\right)\dif|E^su|
	\]
	holds.
\end{lemma}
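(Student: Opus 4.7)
The plan is to mimic the structure of the proof of Proposition~\ref{prop:lower-bound} for the Temam--Strang case. By Lemma~\ref{lem:measure-property-bd}, the set function $V\mapsto\mathcal{F}_{*}[u,V]$ extends to a finite Radon measure on $\Omega$, which we Lebesgue-decompose as $\mathcal{F}_{*}[u,\cdot]=\mathcal{F}_{*}^a[u,\cdot]+\mathcal{F}_{*}^s[u,\cdot]$ with respect to $\mathcal{L}^d\mres\Omega$. By Proposition~\ref{prop:lowerHausdorffBound} (applied with $k=d$ for the absolutely continuous part, and with a density estimate for the singular part using $|E^s u|$ in place of $\mathcal{H}^k$), it suffices to prove the pointwise lower bounds
\[
\frac{\dif\mathcal{F}_{*}^a[u,\cdot]}{\dif\mathcal{L}^d}(x_0)\geq f(\mathcal{E}u(x_0)) \quad \text{for } \mathcal{L}^d\text{-a.e.\ } x_0\in\Omega,
\]
and
\[
\frac{\dif\mathcal{F}_{*}^s[u,\cdot]}{\dif|E^s u|}(x_0)\geq f^{\#}\!\left(\tfrac{\dif E^s u}{\dif|E^s u|}(x_0)\right) \quad \text{for } |E^s u|\text{-a.e.\ } x_0\in\Omega.
\]

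The regular bound goes exactly as in the regular part of the proof of Proposition~\ref{prop:lower-bound}, but without any divergence considerations. At $\mathcal{L}^d$-a.e.\ $x_0\in\Omega$ the function $u$ is approximately differentiable, the density $|\mathcal{E}u(x_0)|$ of $|Eu|$ with respect to $\mathcal{L}^d$ at $x_0$ exists by Theorem~\ref{thm:besicovitch}, and the blow-ups $u_r(y):=r^{-1}(u(x_0+ry)-\tilde u(x_0))$ converge strictly (hence weakly*) in $\BD(B(0,1))$ to the linear map $y\mapsto\nabla u(x_0)y$. Invoking Lemma~\ref{lem:linear-bd}, the weak* lower semicontinuity of $\mathcal{F}_{*}$, and the scaling property of Remark~\ref{rem:invariances-bd}, one concludes $\liminf_{r\downarrow 0}r^{-d}\mathcal{F}_{*}[u,B(x_0,r)]\geq\omega_df(\mathcal{E}u(x_0))$.

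For the singular bound the key ingredient is Lemma~\ref{lem:lower-est}. At $|E^s u|$-a.e.\ $x_0\in\Omega$ the polar derivative equals $a(x_0)\odot b(x_0)$ with $a,b\in\mathbb{R}^d\setminus\{0\}$ by Theorem~\ref{thm:bd-rank-one}; one chooses open cubes $Q_r=Q_r(x_0)$ of side $r$ whose faces are parallel or orthogonal to $a$. By the Besicovitch differentiation theorem in its cube version, $\frac{Eu(Q_r)}{|E^s u|(Q_r)}\to a\odot b$ as $r\downarrow 0$. The idea is to pick an auxiliary function $v_r$ of the special form permitted by Lemma~\ref{lem:lower-est}, namely $v_r(y)=g_r((y-x_0)\cdot a)b+c_r(a\otimes b)(y-x_0)+W_r(y-x_0)+\bar v_r$, with parameters chosen so that $E v_r(Q_r)=Eu(Q_r)$, and then to glue $u$ to $v_r$ on a thin inner boundary layer of $Q_r$ (using a cut-off function together with a Bogovskii-type correction exactly as in Step~1 of the proof of Lemma~\ref{lem:measure-property}) to obtain $\tilde u_r\in\BD(Q_r)$ with $\supp(\tilde u_r-v_r)\Subset Q_r$ and $E\tilde u_r(Q_r)=Eu(Q_r)$. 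Applying Lemma~\ref{lem:lower-est} to $\tilde u_r$, combined with the subadditivity of $\mathcal{F}_{*}$ on the boundary layer (whose cost is controlled by the linear growth~\eqref{eq:growth-bd} and the small $\BD$-mass of $u$ there), yields
\[
\mathcal{F}_{*}[u,Q_r]\geq f(Eu(Q_r))-o(|E^s u|(Q_r))
\]
as $r\downarrow 0$. Dividing by $|E^s u|(Q_r)$ and using the positive $1$-homogeneity and continuity of $f^{\#}$ together with the Besicovitch convergence $Eu(Q_r)/|E^s u|(Q_r)\to a\odot b$ gives the desired bound.

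The main obstacle is the gluing step. Choosing $g_r,c_r,W_r,\bar v_r$ so that $v_r$ has the correct integral $Eu(Q_r)$ and simultaneously stays $\LL^1$-close to $u$ on a thin boundary layer of $Q_r$—so that the Bogovskii correction contributes at most $o(|E^s u|(Q_r))$ to the functional—is delicate. The required closeness is eventually a consequence of the rigidity afforded by Theorem~\ref{thm:bd-rank-one} (which forces the singular behaviour of $u$ at $x_0$ to be one-directional along $a$) together with the linear growth of $f$ and a careful averaging argument on strips $\{y\in Q_r:\operatorname{dist}(y,\partial Q_r)<\varepsilon r\}$.
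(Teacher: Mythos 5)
Your regular-point argument is fine and matches the paper. For the singular part, the overall plan (use Lemma~\ref{lem:lower-est} after manufacturing a competitor of the special one-directional form with the correct average strain) is in the right spirit, but the proposal has a genuine gap and also a misstep.

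The gap is the gluing step you yourself flag. You claim the required $\LL^1$-closeness of $u$ to a special function $v_r$ on a boundary strip ``is eventually a consequence of the rigidity afforded by Theorem~\ref{thm:bd-rank-one}.'' That theorem only identifies the \emph{polar density} of $E^su$ as a symmetric rank-one matrix $a\odot b$; it says nothing about the structure of $u$ itself near $x_0$, and in particular does not force blow-ups of $u$ to be close to a function of the form $g(y\cdot a)b + c(a\otimes b)y + (\text{rigid motion})$. The ingredient actually required is the De Philippis--Rindler rigidity/characterization result (Lemma~2.14 in~\cite{DePhilippisRindler16sec}): after normalizing by $\alpha_r := r^{-d}|Eu|(Q(x_0,r))$, the blow-ups $v_r(y) := \frac{u(x_0+ry)-[u]_{Q(x_0,r)}}{r\alpha_r}+R_r(y)$ converge weakly* in $\BD(Q)$ along a subsequence to a $v_0$ of exactly the form demanded by Lemma~\ref{lem:lower-est}. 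Without this theorem the closeness you need on the boundary strip is simply not available: a priori the blow-up limit of a generic $\BD$ function at a singular point could be anything. The paper therefore first blows up, invokes rigidity on the limit $v_0$, and then glues $v_r$ to $v_0$ inside the fixed unit cube $Q$ (not on the shrinking cube $Q_r$), which also cleanly separates the scaling from the gluing.

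Secondly, the Bogovskii correction you propose is unnecessary here. In $\BD(\Omega)$ the cross term $\nabla\varphi\odot(u-v)$ arising from a cut-off is merely an $\LL^1$ function, which is perfectly admissible for a symmetrized gradient in $\M(\Omega;\mathbb{R}^{d\times d}_{\mathrm{sym}})$. The Bogovskii operator is needed only in the Temam--Strang setting to keep $\div(\varphi u)$ in $\LL^2$; carrying it over into $\BD$ suggests a conflation of the two frameworks and only adds technical noise. The paper's gluing $w_r := \varphi v_r + (1-\varphi)v_0$ is a plain cut-off with no divergence correction, and then one estimates $|E(w_r - v_r)|(Q)$ and $|Ew_r|(Q\setminus\overline{Q}_t)$ using the linear growth, the density estimate $|Ev_0|(\overline{Q}_t)\geq t^d$, and the Lipschitz continuity of $f$, before letting $t\uparrow 1$.
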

\begin{proof} We treat separately $\mathcal{L}^d$-a.e.\ regular point $x_0\in\Omega$ and $|E^su|$-a.e.\ singular point $x_0\in\Omega$.

	\textit{Regular points.} For regular points, the argument is exactly the same as in the first part of the proof of Proposition~\ref{prop:lower-bound}.

	\textit{Singular points.} We want to prove that for all Borel sets $B\subset\Omega$ the inequality
	\[
	\mathcal{F}_{*}^s[u,B]\geq\int_B f^{\#}\left(\frac{\dif E^su}{\dif|E^su|}\right)\dif|E^su|
	\]
	holds. In order to do that we fix $x_0\in\Omega$ such that
	\begin{enumerate}
		\item\label{enum:alberti-type} $\begin{aligned}[t] \frac{\dif E^su}{\dif|E^su|}(x_0)=a\odot b \end{aligned}$
		for some $a,b\in\mathbb{R}^{d}\setminus\{0\}$;
		\item\label{enum:besicovitch-conclusion} $\alpha_r:=r^{-d}|Eu|(Q(x_0,r))\to\infty$ as $r\downarrow 0$, where $Q(x_0,r):=x_0+rQ$ and $Q$ is a (fixed) open $d$-cube with center 0, side-length 1 and sides either parallel or orthogonal to~$a$.
	\end{enumerate}
	These properties hold for $|E^su|$-a.e.\ $x_0\in\Omega$ in virtue of Theorem~\ref{thm:bd-rank-one} and Theorem~\ref{thm:besicovitch}. As in the proof of Proposition \ref{prop:lower-bound}, it suffices to establish the inequality
	\[
	\lim_{r\downarrow 0}\frac{\mathcal{F}_{*}[u,Q(x_0,r)]}{|Eu|(Q(x_0,r))}\geq f^{\#}(a\odot b)
	\]
	for all $|Eu|$-Lebesgue points $x_0\in\Omega$ such that the limit on the left-hand side exists (cf.~Corollary~2.23 in \cite{AmbrosioFuscoPallara00} with $\mu=|Eu|$).

	Define a blow-up sequence
	\[
	v_r(y):=\frac{u(x_0+ry)-[u]_{Q(x_0,r)}}{r\alpha_r}+R_r(y), \quad y\in Q, \quad 0<r<\dist(x_0,\partial\Omega),
	\]
	where $R_r:\mathbb{R}^d\to\mathbb{R}^d$ is a family of rigid deformations and $[u]_{Q(x_0,r)}:=\mint{-}_{Q(x_0,r)}u\,\dif x$ is the average of $u$ over $Q(x_0,r)$.

	In virtue of Lemma~2.14 in~\cite{DePhilippisRindler16sec}, up to a subsequence, the blow-up sequence $(v_r)$ converges weakly* in $\BD(Q)$ to the function
	\[
	v_0(y):=h(y\cdot a)b+c(a\otimes b)y+Wy+\bar{v},
	\]
	with a bounded and increasing function $h:(-1/2,1/2)\to\mathbb{R}$, $c>0$, and a rigid deformation $Wy+\bar{v}$, where $W \in \mathbb{R}^{d\times d}_\mathrm{skew}$, $\bar{v}\in\mathbb{R}^d$.

	Note that for any Borel set $B\subset Q$ we have
	\begin{equation}\label{eq:Evr}
	Ev_r(B)=\frac{r^{1-d}Eu(x_0+rB)}{r\alpha_r}=\frac{Eu(x_0+rB)}{|Eu|(Q(x_0,r))}
	\end{equation}
	hence $|Ev_r|(Q)=1$. Consequently, by Proposition~1.62(b) in~\cite{AmbrosioFuscoPallara00}, we also have $|Ev_0|(Q)\leq 1$.

	Fix $0<t<1$ and let $Q_t:=tQ$ be a re-scaled cube. There exists a (not particularly labeled) sequence of radii $r \downarrow 0$ such that
	\begin{equation}\label{eq:technical}
	\lim_{r\downarrow 0}\frac{|Eu|(Q(x_0,tr))}{|Eu|(Q(x_0,r))}\geq t^d.
	\end{equation}
	Indeed, if it was not true, then for some $0<t_0<1$ we could find $0<r_0<1$ such that
	\[
	|Eu|(Q(x_0,t_0r))\leq t_0^d|Eu|(Q(x_0,r))
	\]
	for all $r<r_0$. Iterating the above inequality yields:
	\[
	|Eu|(Q(x_0,t_0^kr_0))\leq t_0^{kd}|Eu|(Q(x_0,r_0))
	\]
	for all $k\in\mathbb{N}$. Since any $0<r<r_0$ is in the interval $(t_0^{k+1}r_0,t_0^kr_0]$ for some $k\in\mathbb{N}$ we obtain
	\[
	|Eu|(Q(x_0,r))
	\leq
	|Eu|(Q(x_0,t_0^kr_0))
	\leq
	t_0^{kd}|Eu|(Q(x_0,r_0))
	\leq
	\frac{|Eu|(Q(x_0,r_0))}{t_0^dr_0^d}r^d.
	\]
	Hence for any $0<r<r_0$,
	\[
	\alpha_r\leq \frac{|Eu|(Q(x_0,r_0))}{t_0^dr_0^d},
	\]
	which is a contradiction, since $\alpha_r\to+\infty$ as $r\downarrow 0$. So,~\eqref{eq:technical} follows.

	Note that~\eqref{eq:technical} yields
	\begin{equation}\label{eq:technical2}
	\lim_{r\downarrow 0}|Ev_r|(\overline{Q}_t)\geq t^d.
	\end{equation}
	Then, for any weak* limit $\nu$ of $|Ev_r|$ in $Q$ we get by Example~1.63 in~\cite{AmbrosioFuscoPallara00} that $\nu(\overline{Q}_t) \geq t^d$. On the other hand, $Ev_r \starconv Ev_0$ and $Ev_0(Q) = \frac{a \odot b}{|a \odot b|} \nu(Q)$ by Theorem~\ref{thm:besicovitch},~\eqref{eq:Evr}, and~\eqref{enum:alberti-type}. Moreover, $|Ev_0|(Q) \leq \nu(Q) = |Ev_0(Q)| \leq |Ev_0|(Q)$, hence, together with $\nu \geq |Ev_0|$ we obtain that $\nu = |Ev_0|$ on $Q$. Thus, $|Ev_0|(\overline{Q}_t)\geq t^d$.

	Define $w_r:=\varphi v_r+(1-\varphi)v_0$, where $\varphi\in\CC_c^1(Q;[0,1])$ with $\varphi\equiv 1$ on a neighborhood of $\overline{Q}_t$. Clearly, the sequence $(w_r)$ converges to $v_0$ strongly in $\LL^1(Q;\mathbb{R}^d)$ and
	\begin{align*}
	|E(w_r-v_r)|(Q)
	&\leq
	|E(v_r-v_0)|(Q\setminus \overline{Q}_t)+\int_Q |\nabla\varphi|\,|v_r-v_0|\,\dif y \\
	&\leq
	|Ev_r|(Q\setminus \overline{Q}_t)+|Ev_0|(Q\setminus \overline{Q}_t)+\int_Q |\nabla\varphi|\,|v_r-v_0|\,\dif y.
	\end{align*}
	Therefore, by~\eqref{eq:technical2}, we have
	\[
	\limsup_{r\downarrow 0}|E(w_r-v_r)|(Q)\leq 2(1-t^d).
	\]
	Similarly,
	\[
	|Ew_r|(Q\setminus \overline{Q}_t)
	\leq
	|Ev_r|(Q\setminus \overline{Q}_t)+|Ev_0|(Q\setminus \overline{Q}_t)
	+
	\int_Q |\nabla\varphi|\,|v_r-v_0|\,\dif y
	\]
	and thus we also have
	\[
	\limsup_{r\downarrow 0}|Ew_r|(Q\setminus\overline{Q}_t)\leq 2(1-t^d).
	\]
	Using the scaling and growth properties of $\mathcal{F}_{*}$ we obtain
	\begin{align*}
	\frac{\mathcal{F}_{*}[u,Q(x_0,r)]}{|Eu|(Q(x_0,r))}
	&=
	\frac{\mathcal{F}_{*}[\alpha_rv_r,Q]}{\alpha_r} \\
	&\geq
	\frac{\mathcal{F}_{*}[\alpha_rw_r,\overline{Q}_t]}{\alpha_r} \\
	&=
	\frac{\mathcal{F}_{*}[\alpha_rw_r,Q]}{\alpha_r}
	-
	\frac{\mathcal{F}_{*}[\alpha_rw_r,Q\setminus\overline{Q}_t]}{\alpha_r} \\
	&\geq
	\frac{\mathcal{F}_{*}[\alpha_rw_r,Q]}{\alpha_r}
	-M\left(\alpha_r^{-1}|Q\setminus\overline{Q}_t|+|Ew_r|(Q\setminus\overline{Q}_t)\right).
	\end{align*}
	Since $\alpha_r\to+\infty$ as $r\downarrow 0$ we obtain
	\[
	\lim_{r\downarrow 0}\frac{\mathcal{F}_{*}[u,Q(x_0,r)]}{|Eu|(Q(x_0,r))}
	\geq
	\limsup_{r\downarrow 0}\frac{\mathcal{F}_{*}[\alpha_rw_r,Q]}{\alpha_r}
	-2M(1-t^d).
	\]
	By Lemma~\ref{lem:lower-est} in conjunction with the Lipschitz continuity of $f$ (cf.~\cite[Lemma~5.6]{Rindler18book}), we obtain
	\begin{align*}
	\mathcal{F}_{*}[\alpha_rw_r,Q]
	\geq
	f(\alpha_rEw_r(Q))
	\geq
	f(\alpha_r Ev_r(Q))-\alpha_rL|E(w_r-v_r)|(Q)
	\end{align*}
	for all $r>0$. Here $L>0$ denotes a Lipschitz constant of $f$.
	Therefore
	\begin{align*}
	\lim_{r\downarrow 0}\frac{\mathcal{F}_{*}[u,Q(x_0,r)]}{|Eu|(Q(x_0,r))}
	&\geq
	\limsup_{r\downarrow 0}\frac{f(\alpha_r Ev_r(Q))}{\alpha_r}-2(L+M)(1-t^d).
	\end{align*}
	Since
	\[
	Ev_r(Q)
	=
	\frac{Eu(Q(x_0,r))}{|Eu|(Q(x_0,r))}
	\to
	\frac{\dif E^su}{\dif|E^su|}(x_0)
	=
	a\odot b \quad \text{as} \quad r\downarrow 0,
	\]
	we obtain
	\begin{align*}
	\limsup_{r\downarrow 0}\frac{f(\alpha_rEv_r(Q))}{\alpha_r}
	=
	f^{\#}(a\odot b),
	\end{align*}
	We thus have
	\begin{align*}
	\lim_{r\downarrow 0}\frac{\mathcal{F}_{*}[u,Q(x_0,r)]}{|Eu|(Q(x_0,r))}
	&\geq f^{\#}(a\odot b)-2(L+M)(1-t^d).
	\end{align*}
	Letting $t\uparrow 1$ concludes the proof. \qedhere
\end{proof}

\bibliographystyle{plain}

\end{document}